\documentclass[12pt,reqno]{amsart}

\usepackage{mathtools,amssymb,amsthm}
\usepackage{esint}
\usepackage[a4paper,margin=2.5cm]{geometry}
\usepackage[english]{babel}
\usepackage[backref=page]{hyperref}
\usepackage{orcidlink}

\allowdisplaybreaks[1]
\newtheorem{theorem}{Theorem}[section]
\newtheorem{lemma}[theorem]{Lemma}
\newtheorem{proposition}[theorem]{Proposition}
\theoremstyle{remark}
\newtheorem*{remark}{Remark}
\numberwithin{equation}{section}

\DeclareMathOperator{\Div}{Div}
\DeclareMathOperator{\tr}{tr}
\DeclareMathOperator{\BMO}{BMO}

\newcommand{\RR}{\mathbb R}
\newcommand{\loc}{\mathrm{loc}}
\newcommand{\supp}{\operatorname{supp}}
\newcommand{\ol}[1]{\overline{#1}}

\title[Interior Regularity for Stable Solutions]{Interior Regularity for Stable Solutions of Semilinear Elliptic Equations with Sign-Changing Nonlinearities}

\author[Long-Han Huang]{Long-Han Huang\orcidlink{0009-0003-1864-2829}}
\address{Department of Mathematical Sciences, Tsinghua University, Beijing 100084, China}
\email{\href{mailto:huanglh22@mails.tsinghua.edu.cn}{\texttt{huanglh22@mails.tsinghua.edu.cn}}}

\author{Yuxuan Zhou}
\address{Department of Mathematical Sciences, Tsinghua University, Beijing 100084, China}
\email{\href{mailto:zhou-yx22@mails.tsinghua.edu.cn}{\texttt{zhou-yx22@mails.tsinghua.edu.cn}}}

\author[Wenming Zou]{Wenming Zou\orcidlink{0000-0002-4178-9568}}
\address{Department of Mathematical Sciences, Tsinghua University, Beijing 100084, China}
\email{\href{mailto:zou-wm@mail.tsinghua.edu.cn}{\texttt{zou-wm@mail.tsinghua.edu.cn}}}

\subjclass[2020]{Primary 35J61; Secondary 35B65, 35B35, 35B53}
\keywords{Semilinear elliptic equations, stable solutions, interior regularity, sign-changing nonlinearities, Liouville-type theorems}
\date{\today}

\begin{document}

\begin{abstract}
We investigate interior regularity for stable solutions of the semilinear elliptic equation $-\Delta u=f(u)$ in an open set $\Omega\subseteq \mathbb R^n$, where $f$ is locally Lipschitz and may change sign. We establish sharp interior H\"older estimates for stable classical solutions in every dimension $n\le 9$, thereby fully resolving a question raised by Cabr\'e \cite{Cabre2023,Cabre2026} concerning interior regularity without the nonnegativity assumption on $f$. For $n\le 5$, we also give a short alternative proof of the corresponding H\"older regularity result due to Peng, Zhang, and Zhou \cite{PengZhangZhou2026}. We further establish higher-integrability estimates for the gradient and $L^1$ bounds for the Hessian in all dimensions. For $n\ge 10$, we obtain BMO and Morrey estimates, extending related results previously established for nonnegative nonlinearities. When $f$ is convex, we extend the interior regularity estimates to stable weak solutions through an approximation theorem. As applications, we prove Liouville-type theorems for entire stable solutions of equations with sign-changing nonlinearities across all dimensions.
\end{abstract}

\maketitle
\setcounter{tocdepth}{2}
\tableofcontents

\section{Introduction}

\subsection{Background and motivation}

Let \(n \in \mathbb{N}^{+}\) and let \(\Omega \subseteq \mathbb{R}^{n}\) be an open set. We study the semilinear elliptic equation
\begin{equation}\label{eq:pde}
 -\Delta u=f(u)\qquad\hbox{in }\Omega,
\end{equation}
with $f \in C_{\loc}^{0,1}(\RR)$, i.e. $f$ is a locally Lipschitz function in $\RR$. We say that $u\in H^1_{\loc}(\Omega)$ is a weak solution of \eqref{eq:pde} if
$f(u)\in L^1_{\loc}(\Omega)$ and
\[
 \int_\Omega Du\cdot D\varphi=\int_\Omega f(u)\varphi,
 \qquad \forall\,\varphi\in C_c^\infty(\Omega).
\]
For $t\in\RR$, set
\[
 f'_-(t)=\liminf_{h\to 0}\frac{f(t+h)-f(t)}{h},
 \qquad f^-(t)=\max\{-f(t),0\}.
\]
A weak solution $u$ is called stable if
$f'_-(u)\in L^1_{\loc}(\Omega)$ and
\begin{equation}\label{eq:stability}
 Q_u(\xi):=\int_\Omega\bigl(|D\xi|^2-f'_-(u)\xi^2\bigr)\ge0,
 \qquad \forall\,\xi\in C_c^\infty(\Omega).
\end{equation}
We also use $Q_u$ to denote the associated symmetric bilinear form.

Our focus is the regularity of stable solutions to \eqref{eq:pde}, a problem with a long history in elliptic theory. Crandall and Rabinowitz initiated the study of this problem in \cite{CrandallRabinowitz1975}. The topic has since received considerable attention. For a comprehensive account and a detailed presentation of known results, we direct the reader to the monograph \cite{Dupaigne2011} and to the surveys \cite{Cabre2017,Cabre2026}. For related developments on the regularity of stable minimal hypersurfaces, considered as a geometric analog, see \cite{Simons1968,BombieriDeGiorgiGiusti1969,SchoenSimonYau1975,FischerColbrieSchoen1980,ChodoshLi2024,ChodoshLiMinterStryker2026,Mazet2024,HongLiWang2026}.

The regularity theory for stable solutions to \eqref{eq:pde} is closely tied to the classical Gelfand problem \cite{Gelfand1963}:
	\begin{align}\label{eq:gelfand}
		\left\{
		\begin{aligned}
			-\Delta u &= \lambda f(u) && \text{in }\Omega,\\
			u &> 0 && \text{in }\Omega,\\
			u &= 0 && \text{on }\partial\Omega.
		\end{aligned}
		\right.
	\end{align}
Here $\Omega \subseteq \RR^n$ is a bounded smooth connected open set, $\lambda > 0$, and $f: [0,+\infty) \to \RR$ satisfies $f(0)>0$ and is nondecreasing, convex, and superlinear at $+\infty$, meaning $\lim\limits_{t \to +\infty} \frac{f(t)}{t} = +\infty$. It is known \cite{BrezisVazquez1997, Martel1997, Brezis2003, Dupaigne2011} that there exists $\lambda^{*} = \lambda^{*}(\Omega,f) \in (0,+\infty)$ such that for all $\lambda \in (0,\lambda^{*})$ the problem \eqref{eq:gelfand} admits a stable classical solution $u_{\lambda} \in C^{2}(\Omega) \cap C(\ol{\Omega})$, whereas no classical solution exists for $\lambda > \lambda^{*}$. At $\lambda = \lambda^{*}$, the problem \eqref{eq:gelfand} possesses a unique $L^{1}$ solution $u^{*}$, obtained  as the pointwise limit of the increasing family $\{u_{\lambda}\}_{\lambda \in (0,\lambda^{*})}$ as $\lambda \uparrow \lambda^{*}$. This limiting function $u^{*}$ is referred to as the extremal solution. Since $u^{*}$ is known a priori only to belong to $L^{1}(\Omega)$, the question of its higher regularity plays a central role in the analysis of Gelfand and related problems. 

Brezis and Vázquez \cite{BrezisVazquez1997} posed the question of whether $u^{*}$ belongs to $H_{0}^{1}(\Omega)$ in all dimensions. Subsequently, Brezis \cite{Brezis2003} raised the question of whether $u^{*}$ lies in $L^{\infty}(\Omega)$---and is thus a classical solution---for dimensions $1\le n\le 9$. The sharpness of this dimensional threshold is demonstrated by the example of Joseph and Lundgren \cite{JosephLundgren1973}. For $n \ge 10$ and $f(t) = e^{t}$, they showed that the extremal solution of \eqref{eq:gelfand} in the unit open ball $B_1$ is given by $u^{*}(x) = -2 \ln|x|$ (with $\lambda^* = 2(n-2)$), a function belonging to $H_{0}^{1} \setminus L^{\infty}$. Since the extremal solution arises as the pointwise limit of an increasing family of classical stable solutions, its regularity relies on uniform a priori estimates for such solutions. This observation motivates two general conjectures: stable classical solutions of \eqref{eq:gelfand} with $\lambda = 1$ are uniformly bounded in $H^{1}$ in all dimensions, and are uniformly bounded in $L^{\infty}$ for $1 \le n \le 9$.

Considerable progress was made toward both conjectures before their eventual resolution. Regarding the first conjecture, Nedev \cite{Nedev2000} verified it for dimensions $1\le n\le 5$, and subsequently for all dimensions assuming the convexity of $\Omega$ \cite{Nedev2001}. The case $n=6$ was later settled by Villegas \cite{Villegas2013}. The second conjecture was first confirmed by Nedev \cite{Nedev2000} for dimensions $1 \le n \le 3$. Cabré and Capella \cite{CabreCapella2006} subsequently proved it in the radial framework for all $1 \le n \le 9$. The full resolution of the case $n=4$ is due to Villegas \cite{Villegas2013}; see also \cite{Cabre2010, Cabre2019} for complementary results on convex domains. In higher dimensions, Cabré and Ros-Oton \cite{CabreRosOton2013} established the conjecture for convex domains of double revolution when $5 \le n \le 7$. Moreover, for $n=5$, Cabré, Sanchón, and Spruck \cite{CabreSanchonSpruck2016} proved the conjecture under appropriate hypotheses on the nonlinearity $f$.

Cabr\'e, Figalli, Ros-Oton, and Serra \cite[Thm.\,1.5]{CabreFigalliRosOtonSerra2020} eventually provided a complete affirmative answer to these conjectures, thereby resolving the aforementioned questions posed by Brezis and Vázquez. Specifically, they proved that if $n \ge 1$, $\Omega \subseteq \RR^n$ is a bounded $C^3$ open set, and $f: \RR \to \RR$ is nonnegative, nondecreasing, and convex, then any stable solution $u\in C^2(\Omega) \cap C(\ol{\Omega})$ of \eqref{eq:pde} with $u=0$ on $\partial\Omega$ satisfies
\begin{equation}\label{eq:cfrs-global-gradient}
 \|Du\|_{L^{2+\gamma_n}(\Omega)}\le C(\Omega)\|u\|_{L^1(\Omega)},
\end{equation}
and for $1\le n\le 9$,
\begin{equation}\label{eq:cfrs-global-holder}
 \|u\|_{C^{0,\alpha_n}(\ol\Omega)}\le C(\Omega)\|u\|_{L^1(\Omega)}.
\end{equation}
Notably, the exponents $\gamma_n>0$ and $\alpha_n \in (0,1)$ depend solely on the dimension $n$, and the estimates are independent of the nonlinearity $f$. Since the right-hand side involves only the $L^{1}$ norm of $u$, the extremal solution inherits both estimates (see \cite[Cor.\,1.8]{CabreFigalliRosOtonSerra2020}). Furthermore, \cite[Thm.\,1.2]{CabreFigalliRosOtonSerra2020} establishes interior estimates assuming only the nonnegativity of $f$. More precisely, let $n\ge 1$ and let $B_{1}$ denote the unit open ball in $\RR^n$. If $u \in C^{2}(B_{1})$ is a stable solution of \eqref{eq:pde} for some nonnegative $f \in C_{\loc}^{0,1}(\RR)$, then
\begin{equation}\label{eq:cfrs-gradient}
 \|Du\|_{L^{2+\gamma_n}(B_{1/2})}\le C(n)\|u\|_{L^1(B_1)},
\end{equation}
and, for $1\le n\le9$,
\begin{equation}\label{eq:cfrs-holder}
 \|u\|_{C^{0,\alpha_n}(\ol{B_{1/2}})}\le C(n)\|u\|_{L^1(B_1)}.
\end{equation}

Several developments have followed the work of \cite{CabreFigalliRosOtonSerra2020}. Cabr\'{e} \cite{Cabre2026} established a new proof of \eqref{eq:cfrs-global-gradient}--\eqref{eq:cfrs-holder} by employing a new radial Poincar\'e inequality from \cite[Thm.\,1.4]{Cabre2022}. In contrast to the original proof in \cite{CabreFigalliRosOtonSerra2020}, which relied on delicate compactness arguments, this new approach is quantitative and allows for the explicit computation of the H\"{o}lder exponent $\alpha(n)$ in \eqref{eq:cfrs-global-holder} and \eqref{eq:cfrs-holder}. Furthermore, Erneta \cite{Erneta2023, Erneta2024, Erneta2025} extended several results from \cite{CabreFigalliRosOtonSerra2020} to stable solutions of equations with variable coefficients, and also established boundary estimates on $C^{1,1}$ domains. For results concerning stable solutions of equations involving the $p$-Laplacian, the fractional Laplacian, or MEMS-type nonlinearities, we refer the interested reader to \cite{Miraglio2020,CabreMiraglioSanchon2022,PengShiZou2026}, \cite{CabreSanzPerela2022,SanzPerela2025}, and \cite{BrueraCabre2025,PengVillegas2026}, respectively, as well as the references therein.

Since the proofs of \eqref{eq:cfrs-holder} in \cite{CabreFigalliRosOtonSerra2020,Cabre2026} rely crucially on the nonnegativity of $f$, Cabr\'e \cite[p.\,6]{Cabre2023} and \cite[Sec.\,1.6.1]{Cabre2026} raised the question of whether interior $L^\infty$ estimates can be derived without the assumption $f\ge 0$. So far, only partial results have been available. In the radial setting, Cabr\'e and Capella \cite{CabreCapella2006} obtained such estimates. For general stable solutions, Cabr\'e \cite{Cabre2010,Cabre2019} established such estimates in dimensions $n\le 4$, and Peng, Zhang, and Zhou \cite[Thm.\,1.1]{PengZhangZhou2026} later extended the result to $n\le 5$. In fact, they established the following interior H\"older estimate, in which the right-hand side of \eqref{eq:cfrs-holder} is replaced by $\|Du\|_{L^2}$:
\begin{equation}\label{eq:pzz-holder}
 [u]_{C^{0,\alpha_n}(\ol{B_{1/2}})}\le C(n)\|Du\|_{L^2(B_1)}.
\end{equation}
Regarding conditional results for $1\le n\le 9$, Cabré \cite[Sec.\,2.6]{Cabre2026} established an estimate assuming $f\ge -K$:
\begin{equation}\label{eq:cabre-lower-holder}
 \|u\|_{C^{0,\alpha_n}(\ol{B_{1/2}})}
 \le C(n)\bigl(\|u\|_{L^1(B_1)}+K\bigr).
\end{equation}
More recently, under the condition $f(t) \ge A \min\{0, t\} - K$ for constants $A, K \ge 0$, Peng \cite[Thm.\,1.1]{Peng2024} obtained
\begin{align}\label{zyx5}
		\|u\|_{C^{\alpha_{n,A,K}}(\ol{B_{1/2}})} \le C(n,A,K) \left[ \|u\|_{H^1(B_1)} + 1 \right].
    \end{align}
To the best of our knowledge, establishing interior $L^{\infty}$ estimates in the nonradial case for general $f \in C_{\mathrm{loc}}^{0,1}(\mathbb{R})$ remains an open problem for $6\le n\le 9$.

\textbf{In this article}, we establish sharp interior H\"older estimates for stable classical solutions of \eqref{eq:pde} with general, possibly sign-changing nonlinearities in every dimension $n\le 9$, thus completely resolving Cabr\'e's question. For $n\le 5$, we also give a short alternative proof of \eqref{eq:pzz-holder}, yielding the explicit H\"older exponents
\begin{equation}\label{eee}
 \alpha_2=\alpha_3=\frac12,\qquad
 \alpha_4=\sqrt2-1,\qquad
 \alpha_5=\sqrt3-\frac32.
\end{equation}
For $6\le n\le 9$, the H\"older exponent can also be explicitly determined from our argument, although its value may be small. A useful feature of our proofs is that the main regularity arguments are direct and quantitative.

Beyond the H\"older estimates, we establish higher-integrability estimates for the gradient and $L^1$ bounds for the Hessian in every dimension $n\ge 2$. In higher dimensions, we obtain BMO estimates for $n=10$ and Morrey estimates for $n\ge 11$, extending the corresponding results of \cite{CabreFigalliRosOtonSerra2020,PengZhangZhou2024} to general, possibly sign-changing nonlinearities.

When $f$ is convex, an interior approximation theorem allows us to extend the regularity estimates from stable classical solutions to stable weak solutions. As applications of our regularity theory, we establish several Liouville-type theorems for entire stable solutions across all dimensions, improving several results in \cite{DupaigneFarina2022,PengZhangZhou2024}.

Finally, in Appendix \ref{app:radial-poincare}, we extend Cabr\'e's radial Poincar\'e inequality in \cite{Cabre2022} to general nonlinearities $f$. Even when $f\ge 0$, our argument provides a short alternative proof of this inequality. We also discuss an alternative approach to interior H\"older regularity based on this inequality.

\subsection{Main results}

Throughout this paper, we assume $n\ge 2$, since similar estimates for $n=1$ can be readily established even without the stability assumption. Before stating our main results, we introduce some notation. For $r>0$ and $x\in\RR^n$, let $B_r(x)$ denote the open ball of radius $r$ centered at $x$, and write $B_r:=B_r(0)$. Unless otherwise stated, we assume only that $f\in C_{\mathrm{loc}}^{0,1}(\RR)$ and denote its negative part by $f^-:=\max\{-f,0\}$. Set
\begin{equation}\label{eq:data}
 \mathfrak D(u)=\|u\|_{L^1(B_1\setminus B_{3/4})}+\|f^-(u)\|_{L^1(B_1\setminus B_{3/4})}.
\end{equation}
We use $C(\cdot)$ to denote a generic positive constant, with its dependencies indicated in parentheses. We fix the H\"older exponents
\begin{equation}\label{eq:alpha}
 \alpha_2=\alpha_3=\frac12,\qquad
 \alpha_4=\sqrt2-1,\qquad
 \alpha_5=\sqrt3-\frac32,
 \qquad \alpha_n\in(0,1)\quad(6\le n\le9),
\end{equation}
where, for $6\le n\le9$, $\alpha_n$ is determined by the proof of Proposition \ref{prop:decay} (see \eqref{add11}). Define
\begin{equation}\label{eq:beta}
 \beta_n=\begin{cases}
 2-2\alpha_n,&2\le n\le9,\\
 n-2-2\sqrt{n-1},&n\ge10.
 \end{cases}
\end{equation}
For $1\le q\le\infty$, we set
\begin{equation}\label{eq:gamma}
\gamma_{n,q}=\begin{cases}
 2,&n=2,\ 1\le q\le4/3,\\[2pt]
 (4q-4)/(2-q),&n=2,\ 4/3<q<2,\\[2pt]
 \text{any fixed finite number in }[2,\infty),&n=2,\ 2\le q\le\infty,\\[2pt]
 3\min\{q,2\}-2,&n=3,\\[2pt]
 \displaystyle\max\left\{\frac2{n-1},\,
 \min\left\{\frac4{\beta_n},\left(1+\frac2{\beta_n}\right)q-2\right\}\right\},&n\ge4.
\end{cases}
\end{equation}

\vskip0.1in

Our first theorem provides interior estimates for the gradient and Hessian. Compared with the estimate \eqref{eq:cfrs-gradient} from \cite{CabreFigalliRosOtonSerra2020}, our gradient estimates not only allow general, possibly sign-changing nonlinearities $f$, but also provide explicit higher-integrability exponents.

\begin{theorem}\label{thm:gradient}
Let $n\ge 2$, and let $u\in C^2(B_1)$ be a stable solution of \eqref{eq:pde} in $B_1$. Then, for every $1\le q\le\infty$, with $\gamma_{n,q}$ as defined in \eqref{eq:gamma}, we have
\begin{equation}\label{eq:main-gradient}
 \|Du\|_{L^{2+\gamma_{n,q}}(B_{1/2})}
 +\|D^2u\|_{L^1(B_{1/2})}
 \le C(n,q,\gamma_{n,q})
 \left(\mathfrak D(u)+\|f^-(u)\|_{L^q(B_{5/8})}\right).
\end{equation}
In particular, taking $q=1$ yields
\begin{equation}\label{eq:main-endpoint}
 \|Du\|_{L^{2n/(n-1)}(B_{1/2})}
 +\|D^2u\|_{L^1(B_{1/2})}
 \le C(n)\left(\mathfrak D(u)+\|f^-(u)\|_{L^1(B_{5/8})}\right).
\end{equation}
\end{theorem}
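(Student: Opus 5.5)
We would follow the Cabré--Figalli--Ros-Oton--Serra scheme, but run every estimate so that the only place the sign of $f$ enters is through integrals of $f^-(u)$. The starting point is the stability inequality \eqref{eq:stability} with test function $\xi=c\eta$, where $c=|Du|$ or $c=x\cdot Du$ and $\eta$ is a cutoff. For $c=|Du|$ this gives, as usual, the Sternberg--Zumbrun type inequality
\[
\int \Bigl(|D^2u|^2-\bigl|D|Du|\bigr|^2\Bigr)\eta^2\le\int|Du|^2|D\eta|^2 .
\]
This inequality does not involve $f$ at all. For $c=x\cdot Du$ with radial weights $\eta=|x|^{-a}\zeta$, one obtains a weighted bound. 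Its leading terms control
\[
\int_{B_\rho}|x|^{-n}|x\cdot Du|^2\qquad\text{in terms of}\qquad \int_{B_{r}\setminus B_{\rho}}|Du|^2,
\]
with an optimal constant coming from a Hardy inequality. The admissible range of $a$ is exactly what produces $\beta_n$ in \eqref{eq:beta}, namely $2\sqrt{n-1}$ versus $n-2$. The new feature is that the computation of $Q_u\bigl((x\cdot Du)\eta\bigr)$ uses the equation $\Delta(x\cdot Du)=-f'(u)\,x\cdot Du-2f(u)$. We will keep the term $2f(u)(x\cdot Du)\eta^2$ explicitly and split $f=f^+-f^-$. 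We expect to discard the $f^+$ part by sign after an integration by parts (this is where CFRS used $f\ge0$), and to bound the $f^-$ part by $\int f^-(u)|x||Du|\eta^2$.

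Next we want energy decay. Combining the radial estimate with a Poincaré-type argument on annuli, we aim for a Morrey/Campanato decay
\[
\int_{B_\rho}|Du|^2\le C\rho^{\beta_n}\Bigl(\int_{B_1}|Du|^2+\text{($f^-$ terms)}\Bigr).
\]
We plan to take this as given from the analogue of Proposition \ref{prop:decay}, which defines $\alpha_n$ and hence $\beta_n$. The lower bound $n-2+\beta_n\ge\dots$ and H\"older's inequality on the $f^-$ terms produce the dependence on $q$. Specifically, $\|f^-(u)\|_{L^q}$ controls $\int_{B_\rho}f^-|Du|$ with a power of $\rho$ depending on $q$. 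This explains the formula $(1+2/\beta_n)q-2$, which saturates at $4/\beta_n$.

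To get higher integrability, we combine the Morrey decay $\int_{B_\rho(y)}|Du|^2\lesssim\rho^{n-2+\beta}$ (centered at all $y\in B_{1/2}$) with the Hessian inequality above. The latter gives
\[
\int|D^2u||Du|^{\,\cdot}\quad\text{and hence}\quad \int\bigl|D(|Du|^{\,\cdot})\bigr|
\]
control via a good-$\lambda$ or Adams-type embedding: a Morrey-space function whose gradient is integrable gains integrability. This yields $|Du|\in L^{2+\gamma}$ with $\gamma$ as in \eqref{eq:gamma}. The floor value $2/(n-1)$, i.e. the exponent $2n/(n-1)$ in \eqref{eq:main-endpoint}, should come from the $L^1$ Hessian bound alone via Sobolev $W^{1,1}\subset L^{n/(n-1)}$ applied to $|Du|^2$-type quantities.

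For the Hessian in $L^1$, we plan to write $|D^2u|\le \bigl|D|Du|\bigr|+\bigl(|D^2u|^2-|D|Du||^2\bigr)^{1/2}$ on $\{Du\ne0\}$. The second piece is handled by Cauchy--Schwarz and the stability inequality. For the first piece, we use $\Delta u=-f(u)$ together with the identity $\int|D|Du||\lesssim\int|\Delta u|+\dots$ on level sets (as in CFRS), bounding $\int f^+(u)\eta$ by $\int f^-(u)\eta$ plus boundary terms from integrating $-\Delta u$ against $\eta$.

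Finally, all right-hand sides will involve $\|Du\|_{L^2(B_{3/4})}$. We will reduce this to $\mathfrak D(u)$ by testing the equation with $u\eta^2$ and by interpolation on the annulus $B_1\setminus B_{3/4}$. Choosing a cutoff supported near $\partial B_{7/8}$ should keep only annulus data plus $\|f^-(u)\|_{L^1}$. A covering argument then gives the full bound.

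The main obstacle will be controlling $\|Du\|_{L^2}$ and the $f$-term in the radial stability computation by $f^-$ only. Here the naive bound needs $|f(u)|$. Our first attempt is to use $\int f(u)\varphi=\int Du\cdot D\varphi$ for $\varphi\ge0$, which converts $f^+$ into $f^-$ plus gradient terms, and then to absorb those gradient terms via the decay.
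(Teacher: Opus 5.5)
\textbf{Overall.} For $n\ge3$ your architecture is the paper's: the Hessian in $L^1$ from \eqref{eq:geometric} and $|\Delta u|=-\Delta u+2f^-(u)$, the decay of Proposition~\ref{prop:decay}, and a Morrey--BV embedding applied to $|Du|^{\sigma-1}Du$. Taking the decay as given is fine, since the paper does too. Note, though, that for $6\le n\le9$ it needs Proposition~\ref{prop:rigidity}, not just the radial estimate plus Poincar\'e on annuli. Your unweighted Hessian step is correct.

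\textbf{Main gap: the reduction to $\mathfrak D(u)$.} Testing the equation with $u\eta^2$ gives $\int|Du|^2\eta^2=\int f(u)u\eta^2-2\int\eta u\,Du\cdot D\eta$. Here $\int f(u)u\eta^2$ has no sign and is not bounded by $L^1$ norms of $u$ and $f^-(u)$. Moreover, nothing is assumed about $u$ or $f^-(u)$ on $B_{3/4}\setminus B_{5/8}$. What you need is a stability-based mechanism bounding interior energy by annular energy, and the paper supplies it in two steps.
\begin{itemize}
\item Proposition~\ref{prop:annulus} gives $\int_{B_r}|Du|^2\le C\int_{B_1\setminus B_r}|Du|^2$. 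It tests $\Div(Du\otimes Du-|Du|^2I+KI)=0$, with $K=|Du|^2/2+F(u)$, against $x\chi$, and estimates $K-\bar K$ through Poincar\'e, $|DK|\le\sqrt{n-1}\,|Du|\mathcal A_u$, and \eqref{eq:geometric}. For $n\ge3$ your own test function $(x\cdot Du)\eta$ with $\eta\equiv1$ on $B_r$ also gives this via \eqref{eq:dilation}, but the factor $n-2$ vanishes when $n=2$.
\item Lemma~\ref{lem:endpoint} then bounds the annular energy by $\inf_c\|u-c\|_{L^1}+\|f^-(u)\|_{L^1}$ on the annulus. It combines the reverse H\"older bound $\|Du\|_{L^{2n/(n-1)}(B_r)}\lesssim\|Du\|_{L^2(B_R)}+\|f^-(u)\|_{L^1(B_R)}$, the $L^s$ bound of Lemma~\ref{lem:poisson-mass} for $s<n/(n-1)$, interpolation, and Simon's absorption lemma.
\end{itemize}

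\textbf{Second gap: the $q=1$ endpoint.} That reverse H\"older bound is exactly \eqref{eq:main-endpoint}, and your route to it fails. The $L^1$ Hessian bound with $W^{1,1}\hookrightarrow L^{n/(n-1)}$ only gives $Du\in L^{n/(n-1)}$. Applying Sobolev to $\eta|Du|^2$ instead requires $\int\eta|Du||D^2u|$, whose $\Delta u$-part contains $\int\eta f^-(u)|Du|$, and $\|f^-(u)\|_{L^1}$ does not control that.

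The paper uses a level-set argument that never involves the normal derivative of $|Du|$. The flux $F_\eta(t)=\int_{\{u=t\}}\eta^2|Du|$ is nonnegative, compactly supported, and satisfies $F_\eta'=u_\#[\operatorname{div}(\eta^2Du)\,dx]$. Hence $\|F_\eta\|_{L^\infty}\le\int\eta^2f^-(u)+2\int\eta|Du||D\eta|$. Lemma~\ref{lem:hypersurface-sobolev} on each level set, together with the coarea formula, then gives $\int_{B_r}|Du|^{2+2/(n-1)}\lesssim\|F_\eta\|_{L^\infty}^{2/(n-1)}\int_{B_R}|Du|^2$.

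\textbf{Smaller points.}
\begin{itemize}
\item Inequality \eqref{eq:geometric} controls only $\mathcal A_u$. Bounding $\int|Du|^{\sigma-1}|D^2u|$ therefore needs the identity $\operatorname{div}(|Du|^{\sigma-1}Du)=\sigma|Du|^{\sigma-1}\Delta u-(\sigma-1)T_0|Du|^{\sigma-1}$, where $T_0=\Delta u-D^2u[e,e]$ and $e=Du/|Du|$. It then needs an absorption over nested balls.
\item Your account of the exponents is off. Once the Morrey bound fixes $\sigma=2+\gamma-2/\beta_n$, the $q$-dependence enters only through $\int f^-(u)|Du|^{\sigma-1}$. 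The cap $4/\beta_n$ comes from $\int\mathcal A_u|Du|^{\sigma-1}$, because $\mathcal A_u$ is known only in $L^2$. The decay itself has no $f^-$ terms, and its exponent is $n-\beta_n$, not $\beta_n$.
\item For $n=2$, the Morrey--BV route with $\beta_2=1$ caps $\gamma$ at $4$, whereas $\gamma_{2,q}\to\infty$ as $q\uparrow2$. There the paper applies $W^{1,1}(\RR^2)\hookrightarrow L^2$ directly to $\eta|Du|^{\sigma-1}Du$.
\item Splitting $f=f^+-f^-$ in $Q_u((x\cdot Du)\eta)$ is neither valid nor needed: $f^+(u)\,x\cdot Du$ has no sign, and substituting $f(u)=-\Delta u$ and integrating by parts removes $f$ completely, which is \eqref{eq:dilation}.
\end{itemize}
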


\vskip0.1in

Our second theorem establishes interior H\"older estimates in the full dimensional range $2\le n\le 9$. The first estimate \eqref{eq:main-holder0} extends the result \eqref{eq:pzz-holder} of Peng, Zhang, and Zhou \cite{PengZhangZhou2026} to dimensions $6\le n\le 9$. The second estimate, which is new in every dimension $2\le n\le 9$, controls the full interior H\"older norm using only the $L^1$ norms of $u$ and $f^-(u)$ over the outer annulus $B_1\setminus B_{3/4}$.

\begin{theorem}\label{thm:holder}
Let $2\le n\le 9$, and let $u\in C^2(B_1)$ be a stable solution of \eqref{eq:pde} in $B_1$. Then, with $\alpha_n$ as in \eqref{eq:alpha}, we have
\begin{equation}\label{eq:main-holder0}
 [u]_{C^{0,\alpha_n}(\ol{B_{1/2}})}
 \le C(n)\|Du\|_{L^2(B_{3/4})},
\end{equation}
and
\begin{equation}\label{eq:main-holder}
 \|u\|_{C^{0,\alpha_n}(\ol{B_{1/2}})}
 \le C(n)\mathfrak D(u).
\end{equation}
\end{theorem}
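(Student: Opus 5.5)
The plan is to prove \eqref{eq:main-holder0} through a Morrey--Campanato decay for the Dirichlet energy. The goal is an estimate of the form
\[
\int_{B_\rho(y)}|Du|^2\le C(n)\Bigl(\frac{\rho}{r}\Bigr)^{n-2+2\alpha_n}\int_{B_r(y)}|Du|^2,\qquad y\in B_{1/2},\ 0<\rho<r\le 1/4 .
\]
Morrey's Dirichlet growth lemma then gives $[u]_{C^{0,\alpha_n}(\ol{B_{1/2}})}\le C\|Du\|_{L^2(B_{3/4})}$. I expect this decay to be what Proposition~\ref{prop:decay} provides, and the step-by-step plan below is how I would obtain it.

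\textbf{Stability inequality.} The starting point is the stability inequality \eqref{eq:stability} tested with $\xi=(x-y)\cdot Du\,\eta$. Because $f$ may change sign, I use the variant in which $f(u)$ never appears. Since $u\in C^2$ solves \eqref{eq:pde}, the function $c=x\cdot Du$ satisfies $-\Delta c=f'(u)c+2f(u)$, and $2f(u)=-2\Delta u$. A direct computation then shows that $Q_u(c\eta,c\eta)$ is controlled by integrals of $|Du|^2$ and $|x\cdot Du|^2$ against derivatives of $\eta$, up to boundary-type terms coming from $\Delta u\, c\,\eta^2$. These terms integrate by parts into $|Du|^2$ terms, so no sign of $f$ is needed. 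With radial cutoffs $\eta=|x|^{-a}\zeta$, as in \cite{CabreFigalliRosOtonSerra2020}, the result is an inequality of the form
\[
\int_{B_\rho}|x|^{2-n}|\partial_r u|^2\lesssim \text{annulus energy},
\]
together with a radial-versus-full-gradient comparison.

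\textbf{Iteration and exponents.} In low dimensions $n\le5$, the optimal choice of $a$ (a quadratic optimization, which is where $\sqrt2-1$ and $\sqrt3-\tfrac32$ arise) yields directly a hole-filling inequality $\int_{B_\rho}|Du|^2\le\theta\int_{B_{2\rho}}|Du|^2$ with the sharp rate. In dimensions $6\le n\le9$, the weighted radial estimate alone is insufficient. There I would combine it with the gradient/Hessian bound of Theorem~\ref{thm:gradient}, rescaled, to control the tangential energy by the radial energy on annuli. This follows the compactness-free quantitative route of Cabr\'e \cite{Cabre2026}, and it is here that the condition $n\le9$ (positivity of $(n-2)-2\sqrt{n-1}$ failing) enters. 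Iterating the resulting inequality over dyadic scales gives the decay with some explicit $\alpha_n>0$.

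\textbf{Full norm bound.} For \eqref{eq:main-holder}, I bound $\|Du\|_{L^2(B_{3/4})}$ by $\mathfrak D(u)$. To do this, apply Theorem~\ref{thm:gradient} with $q=1$ after a covering and rescaling argument. Points of $B_{3/4}$ near the annulus use balls inside $B_1\setminus B_{3/4}$-adjacent regions, and the $L^1$ norm of $f^-(u)$ on $B_{5/8}$ must itself be controlled by $\mathfrak D(u)$. For this I would integrate the equation against a cutoff: $\int f^-(u)\varphi\le\int f^+(u)\varphi+\int u\Delta\varphi$, which reduces to bounding $\int f(u)\varphi$ via Green-type identities from annulus data. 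Finally, $\|u\|_{L^\infty(B_{1/2})}$ follows from the seminorm bound plus the $L^1$ average.

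\textbf{Main obstacle.} I expect the hardest step to be the $6\le n\le9$ decay, namely closing the tangential-gradient control without a sign on $f$. I would first try to derive it from the Hessian $L^1$ bound applied at each dyadic scale.
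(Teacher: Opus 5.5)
There is a genuine gap. Your outer architecture matches the paper's: an energy decay $\int_{B_\rho(y)}|Du|^2\lesssim(\rho/r)^{n-2+2\alpha_n}\int_{B_r(y)}|Du|^2$, then Morrey's lemma, then control of the energy by annular data. But the decay step, which is the whole difficulty, does not go through as you describe, and the full-norm step uses a false bound.

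\textbf{The decay for $n\le5$.} The radial test $\xi=(x\cdot Du)|x|^{-a}\zeta$ in \eqref{eq:dilation} gives, up to cutoff terms,
$(n-2+2a-a^2)\int|x|^{-2a}u_r^2+(n-2-2a)\int|x|^{-2a}|D_\tau u|^2\le\cdots$.
So tangential control forces $2a\le n-2$, and the output is at best the borderline exponent $n-2$ of \eqref{eq:radial-critical}. A purely tangential profile with $u_r\equiv0$ and $|Du|^2\propto|x|^{-2}$ is consistent with all these inequalities and decays exactly like $\rho^{n-2}$. Hence this test, by itself, cannot produce any H\"older modulus. The exponents you quote satisfy $n-2+2\alpha_n=2\sqrt{n-2}$ for $3\le n\le5$, and they come from different Jacobi fields. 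These are the rotational derivatives $W_{ij}=x_i\partial_ju-x_j\partial_iu$, which solve $(-\Delta-f'_-(u))W_{ij}=0$ with no $f(u)$ term. Testing stability with $|W|\eta$ and using $|D(W/|W|)|^2\ge(n-2)|x|^{-2}$ gives $(n-2)\int|D_\tau u|^2\eta^2\le\int|x|^2|D_\tau u|^2|D\eta|^2$. This yields tangential decay with exponent $2\sqrt{n-2}$, and averaging over centers recovers $Du$ (Lemma \ref{lem:rotation}).

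\textbf{The decay for $6\le n\le9$.} Your plan fails structurally. A Hessian bound (Lemma \ref{lem:endpoint}) or a radial Poincar\'e inequality carries $\|f^-(u)\|_{L^1}$ on the right at every scale, and this quantity is not controlled by the energy. Theorem \ref{thm:obstructions}(ii) gives stable $u_j$ with $\sup_j\|Du_j\|_{L^2(B_1)}<\infty$ but $\|D^2u_j\|_{L^1(B_{1/2})}\to\infty$, hence $\|f_j^-(u_j)\|_{L^1(B_{5/8})}\to\infty$. Iterating such bounds can at best give an estimate involving $\|f^-(u)\|_{L^p}$ with $p>n/2$ (see the remark closing the appendix), never \eqref{eq:main-holder0}. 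Two smaller points: invoking Theorem \ref{thm:gradient} here is circular, since for $n\ge3$ it is derived from Proposition \ref{prop:decay}. And $n\le9$ enters through the factor $(n-2)(10-n)$ behind \eqref{eq:radial-budget}, not through the sign of $n-2-2\sqrt{n-1}$, which is already positive for $n\ge7$.

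The missing idea is the quantitative rigidity of Proposition \ref{prop:rigidity}, whose data are only the energy, $\|x\cdot Du\|_{L^2}$, and $Q_u((x\cdot Du)\eta)$. Its key input is Lemma \ref{lem:reaction}. It combines the Jacobi equation for $x\cdot Du$, Cauchy--Schwarz for $Q_u$, and the Riesz representation theorem. This writes $f(u)\partial_iu$ and $f(u)\partial_iu\partial_ku/|Du|$ as divergences of fields with $L^1$ norm $\lesssim Q_u((x\cdot Du)\eta)^{1/2}\|Du\|_{L^2}$, so the sign of $f$ never enters. Averaging \eqref{eq:dilation} over scales then finds a scale where the rigidity applies, giving a fixed-ratio contraction.

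\textbf{The full norm \eqref{eq:main-holder}.} Your route needs $\|f^-(u)\|_{L^1(B_{5/8})}\le C\mathfrak D(u)$, which is false. The same $u_j$ coincide on $B_1\setminus B_{3/4}$ with $f_j^-(u_j)=0$ there, so $\mathfrak D(u_j)$ does not depend on $j$, while $\|f_j^-(u_j)\|_{L^1(B_{5/8})}\to\infty$. Your identity only trades $\int f^-(u)\varphi$ for $\int f^+(u)\varphi$, which is no better controlled.

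What is needed is a bound using no interior information on $f$, namely Proposition \ref{prop:annulus}. By \eqref{eq:stress}, $\Div(Du\otimes Du-|Du|^2I+KI)=0$ with $K=|Du|^2/2+F(u)$. Testing with $x\chi$ expresses $(n-1)\int\chi|Du|^2$ through annular terms plus $\int(K-\bar K)\operatorname{div}(x\chi)$. That last term is controlled by Poincar\'e's inequality, $|DK|\le\sqrt{n-1}\,|Du|\mathcal A_u$, and \eqref{eq:geometric}. This yields $\int_{B_r}|Du|^2\le C\int_{B_1\setminus B_r}|Du|^2$, and Lemma \ref{lem:endpoint} on balls inside the annulus turns the annular energy into $\mathfrak D(u)$.

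Finally, for the $L^\infty$ bound you need the H\"older seminorm on a ball reaching into $B_1\setminus B_{3/4}$. The paper uses $B_{7/8}$, with energy on $B_{15/16}$, because $\mathfrak D(u)$ controls averages of $u$ only there.
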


\vskip0.1in

Our third result provides three constructions showing that certain aspects of the four estimates \eqref{eq:main-gradient}--\eqref{eq:main-holder} are sharp.

\begin{theorem}\label{thm:obstructions}
For every $n\ge 2$, the following assertions hold.
\begin{enumerate}
\item[(i)] There exist sequences $f_j\in C^\infty(\RR)$ and stable solutions $u_j\in C^\infty(\RR^n)$ of $-\Delta u_j=f_j(u_j)$ in $\RR^n$ such that
\[
 \sup_j\|u_j\|_{L^\infty(\RR^n)}\le 1,
\]
but
\[
 \|Du_j\|_{L^p(B_{1/2})}\longrightarrow\infty
 \quad(\forall\;1<p\le\infty),
 \qquad
 \|D^2u_j\|_{L^1(B_{1/2})}\longrightarrow\infty,
\]
and $[u_j]_{C^{0,\alpha}(\ol{B_{1/2}})}\to\infty$ for every $0<\alpha\le 1$.

\item[(ii)] There exist sequences $f_j\in C^\infty(\RR)$ and radial stable solutions $u_j\in C^\infty(\ol{B_1})$ of $-\Delta u_j=f_j(u_j)$ in $B_1$ such that $0\le u_j\le 1$ in $B_1$, $u_j=0$ on $\partial B_1$, and
\[
 \sup_j\|Du_j\|_{L^2(B_1)}<\infty.
\]
Moreover, the functions $u_j$ agree on $B_1\setminus B_{3/4}$, $f_j^-(u_j)=0$ on $B_1\setminus B_{3/4}$, and
\[
 \|Du_j\|_{L^p(B_{1/2})}\longrightarrow\infty
 \quad(\forall\;2<p\le\infty),
 \qquad
 \|D^2u_j\|_{L^1(B_{1/2})}\longrightarrow\infty.
\]

\item[(iii)] There exist sequences of nonnegative functions $f_j\in C^\infty(\RR)$ and stable solutions $u_j\in C^\infty(\RR^n)$ of $-\Delta u_j=f_j(u_j)$ in $\RR^n$ such that
\[
 \sup_j\|u_j\|_{W^{1,\infty}(B_1)}<\infty,
 \qquad
 \|D^2u_j\|_{L^p(B_{1/2})}\longrightarrow\infty
 \quad(\forall\;1<p\le\infty).
\]
\end{enumerate}
\end{theorem}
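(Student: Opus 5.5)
The plan is to build all three families from one-dimensional or radial profiles. For such profiles the equation $-\Delta u=f(u)$ simply defines $f$. Stability will come from the classical criterion: if there is a function $\varphi>0$ with $-\Delta\varphi\ge f'(u)\varphi$ in the domain, then $Q_u\ge0$. To see this, write $\xi=\varphi\eta$, integrate by parts and use Picone's identity. For strictly monotone one-dimensional profiles $u(x)=\phi(x_1)$ the natural choice is $\varphi=\partial_{x_1}u=\phi'(x_1)>0$. Differentiating the equation gives $-\Delta\varphi=f'(u)\varphi$ exactly, so $u$ is stable on all of $\RR^n$.

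For (i) I take $u_j(x)=\tanh(jx_1)$. Since $\tanh''=-2\tanh(1-\tanh^2)$, we get $-\Delta u_j=f_j(u_j)$ with the polynomial $f_j(t)=2j^2(t-t^3)\in C^\infty(\RR)$. We have $|u_j|<1$ and $\partial_1u_j>0$, so $u_j$ is stable. By scaling,
\[
\|Du_j\|_{L^p(B_{1/2})}^p\approx j^{p}\cdot j^{-1}\to\infty\quad(p>1),\qquad \|D^2u_j\|_{L^1(B_{1/2})}\approx j^2\cdot j^{-1}\to\infty.
\]
The H\"older seminorm also blows up: $u_j$ jumps by about $1$ across a slab of width about $1/j$.

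For (iii) I take $u_j(x)=\phi_j(x_1)$, where $\phi_j'$ is smooth and nonincreasing, equal to $2$ for $x_1<0$ and to $1$ for $x_1>1/j$; concretely $\phi_j'(s)=1+\psi(js)$ with a fixed smooth nonincreasing $\psi$. Then $\phi_j:\RR\to\RR$ is a diffeomorphism, and
\[
f_j(t):=-\phi_j''(\phi_j^{-1}(t))\ge0
\]
is a smooth nonnegative function on all of $\RR$. Stability again follows from $\partial_1u_j>0$, and $\|u_j\|_{W^{1,\infty}(B_1)}$ stays bounded. On the other hand $|D^2u_j|\approx j$ on a slab of width $1/j$, so $\|D^2u_j\|_{L^p(B_{1/2})}\approx j^{1-1/p}\to\infty$ for $p>1$.

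For (ii) I work radially with $u_j(r)$ nonincreasing. I fix the profile on $\{3/4\le r\le1\}$ as a fixed smooth decreasing function vanishing at $r=1$, with $-\Delta u\ge0$ there so that $f_j^-(u_j)=0$ on that annulus. Inside I insert a steep transition at $r=1/4$: a drop of height $\delta_j=\sqrt{s_j}$ over width $s_j\to0$, so that $u_j'=-s_j^{-1/2}\psi((r-1/4)/s_j)-g_0(r)$ with a fixed bump $\psi$ and a fixed positive background $g_0$. The height-versus-width balance is chosen so that
\[
\int|Du_j|^2\approx \delta_j^2/s_j=1
\]
stays bounded, while $\int_{B_{1/2}}|Du_j|^p\approx s_j^{1-p/2}\to\infty$ for $p>2$. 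Likewise $\|D^2u_j\|_{L^1(B_{1/2})}\approx s_j^{-1/2}\to\infty$. I keep $0\le u_j\le1$, and $f_j$ is defined by $f_j(u_j(r))=-\Delta u_j(r)$, which is smooth because $u_j'<0$ on $(0,1]$. Near $r=0$ I take the profile smooth and even.

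The main obstacle is stability in (ii). Here $w=-u_j'>0$ only satisfies
\[
-\Delta w+\frac{n-1}{r^2}w=f'(u)w,
\]
which has the wrong sign for the criterion. My first attempt is the test function $\varphi=w$ itself. Then
\[
-\Delta\varphi-f'(u)\varphi=-\frac{n-1}{r^2}\varphi,
\]
and the error $-(n-1)r^{-2}\varphi$ is bounded away from the origin. In the transition layer, $f'(u)=-h'(r)/u'(r)$ with $h=-\Delta u_j$ is of order $s_j^{-2}$ and is exactly the one-dimensional Jacobi potential of the profile, so the bounded error is lower order there. Outside the layer the error should be absorbed by choosing $g_0$ so that $-\Delta u_j$ is nondecreasing in $r$, which makes $f'(u)\le0$, with a margin larger than $(n-1)r^{-2}$. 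Concretely I would take $u_j$ near the origin as a downward paraboloid with large curvature and use the radial slope of $g_0$ for the margin. If that accounting fails, the fallback is to replace $\varphi$ by $w\cdot e^{-Mr}$ or by $w+\epsilon$. One must then check that the extra terms, of order $M s_j^{-1/2}$ in the layer, are dominated by the positivity available there, which may force $s_j$ small relative to $M^{-1}$.
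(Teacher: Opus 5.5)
\textbf{Parts (i) and (iii).} These are correct and essentially coincide with the paper's constructions: $\tanh(jx_1)$, and a one-dimensional profile whose slope drops by a fixed amount across a slab of width $1/j$. Both are stable because $\partial_1u_j>0$ is a Jacobi field.

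\textbf{The gap in (ii): your stability mechanism cannot work.} For every radial profile with $w=-u_j'>0$ one has the exact identity $-\Delta w-f_j'(u_j)w=-(n-1)r^{-2}w$. So $\varphi=w$ is a strict subsolution everywhere. Choosing $g_0$ so that $f_j'(u_j)\le0$ outside the layer does not help, because it changes $f_j'$ and $w$ together and the identity persists. Nor is the error ``lower order'' in the layer. Since $w>0$ solves $(-\Delta-f_j'(u_j)+(n-1)r^{-2})w=0$ exactly, that operator sits exactly at the stability threshold, and no positivity is left over to absorb anything. Your fallbacks fail for concrete reasons:
\begin{itemize}
\item Near the top of the bump, $f_j'(u_j)\approx-s_j^{-2}\psi''/\psi$ is of order $+s_j^{-2}$. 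So $w+\epsilon$ only adds the negative term $-\epsilon f_j'(u_j)$.
\item For $\varphi=we^{\mp Mr}$ one finds $-\Delta\varphi-f_j'(u_j)\varphi=e^{\mp Mr}\bigl(\pm2Mw'+O((M+M^2)w)-(n-1)r^{-2}w\bigr)$. Here $|w'|$ is of order $s_j^{-3/2}$ and changes sign across the bump, while $w$ is of order $s_j^{-1/2}$. Letting $s_j\to0$ therefore makes matters worse, not better.
\end{itemize}

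\textbf{Worse, your normalization is genuinely unstable in low dimensions.} The layer carries energy of order one. For radial $\zeta$ with $\zeta(1)=0$, the test function $\xi=w\zeta$ (admissible by density, as $f_j'(u_j)$ is bounded) gives
\[
Q_{u_j}(w\zeta)=\int_{B_1}w^2|D\zeta|^2-(n-1)\int_{B_1}r^{-2}w^2\zeta^2.
\]
Take $n=3$, let $a\approx1/4$ be the outer edge of the layer, set $\zeta=1$ on $B_a$, and take the optimal radial profile on $B_1\setminus B_a$.
\begin{itemize}
\item Negative term: $w\ge s_j^{-1/2}\psi((r-1/4)/s_j)$, and $\int\psi=1$ over an interval of length one, so $\int\psi^2\ge1$. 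Hence this term is at least $2|\mathbb S^2|$.
\item Positive term: by H\"older and $\int_0^1g_0\le u_j(0)\le1$, it equals $|\mathbb S^2|\bigl(\int_a^1r^{-2}g_0^{-2}\,dr\bigr)^{-1}\le|\mathbb S^2|\bigl(\int_a^1r^{-2/3}\,dr\bigr)^{-3}<|\mathbb S^2|$.
\end{itemize}
Thus $Q_{u_j}(w\zeta)<0$ for every admissible $g_0$, and $n=2$ behaves the same way.

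\textbf{Missing idea.} The layer must carry small energy, and stability must be proved perturbatively around an exactly critical background. In the paper, $-u_j'=rb_j$ on the background $u=(1-r^2)/2$, where $b_j-1$ is a bump of height $j$ and width of order $c/j^2$ near $r=1/4$. One writes $\xi=rb_j\zeta$ and uses the ground-state identity for $|x|$,
\[
\int r^2|D\zeta|^2-(n-1)\int\zeta^2=\int|D(r\zeta)|^2 .
\]
The remaining term $(n-1)\int(b_j^2-1)\zeta^2$ has radial weight of mass $O(c)$. It is absorbed through the one-dimensional bound $\sup_{3/16\le r\le5/16}|\zeta(r\omega)|^2\le C(n)\int_0^1t^{n-1}|\partial_t(t\zeta(t\omega))|^2\,dt$, with $c=c(n)$ small. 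Taking $g_0=r$ and replacing $\psi$ by $\epsilon\psi$ with $\epsilon=\epsilon(n)$ small would repair your construction in exactly this way.
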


\begin{remark}
Part~(i) shows that the terms involving $f^-(u)$ in \eqref{eq:main-gradient}, \eqref{eq:main-endpoint}, and \eqref{eq:main-holder} are essential. Related examples illustrating the failure of certain $L^p$-$L^q$ estimates for stable solutions were recently constructed by Villegas \cite{Villegas2026}, using more involved arguments. Our construction also provides a negative answer to the question raised by Peng, Zhang, and Zhou \cite[Rem.\,1.4(ii)]{PengZhangZhou2026}.

Part~(ii) shows that the term $\|f^-(u)\|_{L^q(B_{5/8})}$ in \eqref{eq:main-gradient} cannot be omitted. In particular, the annular norm $\|u\|_{L^1(B_1\setminus B_{3/4})}$ alone cannot control $\|Du\|_{L^p(B_{1/2})}$ for any $p>2$, or $\|D^2u\|_{L^1(B_{1/2})}$. By contrast, when $p=2$, the right-hand side can involve only integrals over the outer annulus, as shown in~\eqref{eq:annulus-data}.

Part~(iii) shows that even for uniformly Lipschitz stable solutions with nonnegative nonlinearities, the $L^p$ norm of the Hessian cannot be uniformly bounded for any $p>1$. Thus, the $L^1$ estimate for the Hessian in \eqref{eq:main-gradient} cannot, in general, be improved to an $L^p$ estimate for any $p>1$.
\end{remark}

\vskip0.1in

Our next main result establishes $\BMO$ and Morrey estimates for stable solutions in dimensions $n\ge 10$. Since uniform $L^\infty$ bounds are no longer available for $n\ge 10$, these estimates serve as the natural high-dimensional counterparts to the H\"older regularity obtained for $n\le 9$.

Before stating the theorem, we briefly recall the definitions of the relevant function spaces. Throughout the paper, given an open set $\Omega \subseteq \mathbb{R}^n$, $p \in (0,+\infty)$, and $\beta \in [0,n]$, the Morrey space $M^{p,\beta}(\Omega)$ consists of all measurable functions $u$ on $\Omega$ such that
\begin{equation}\label{eq:morrey-definition}
 \|u\|_{M^{p,\beta}(\Omega)}
 :=\sup_{x\in \Omega,\,r>0}
 \left(r^{\beta-n}\int_{\Omega\cap B_r(x)}|u|^p\right)^{1/p}<+\infty.
\end{equation}
Similarly, the space $\BMO(\Omega)$ of functions of bounded mean oscillation consists of all functions $u \in L^{1}(\Omega)$ satisfying
\[
[u]_{\BMO(\Omega)} := \sup_{x \in \Omega,\, r > 0} \inf_{c \in \RR} \fint_{\Omega \cap B_r(x)} |u-c| < +\infty,
\]
where $\fint_{\Omega \cap B_r(x)}$ denotes the integral average over $\Omega \cap B_r(x)$. Since $[\cdot]_{\BMO(\Omega)}$ defines only a seminorm, we equip $\BMO(\Omega)$ with the norm
\[
 \|u\|_{\BMO(\Omega)} := \|u\|_{L^{1}(\Omega)} + [u]_{\BMO(\Omega)}.
\]

\begin{theorem}\label{thm:morrey}
Let $u\in C^2(B_1)$ be a stable solution of \eqref{eq:pde} in $B_1$.
\begin{enumerate}
\item[(i)] If $n\ge2$ and $\beta_n\le\beta\le n$, then
\[
 \|Du\|_{M^{2,\beta}(B_{1/2})}\le C(n,\beta)\mathfrak D(u).
\]

\item[(ii)] If $2\le n\le10$, then
\[
 \|u\|_{\BMO(B_{1/2})}\le C(n)\mathfrak D(u).
\]

\item[(iii)] If $n=10$ and $2n/(n-2)\le p<\infty$, or if $n\ge11$ and $2n/(n-2)\le p\le p_n:=2\beta_n/(\beta_n-2)$, then
\[
 \|u\|_{M^{p,\,2p/(p-2)}(B_{1/2})}\le C(n,p)\mathfrak D(u).
\]
\end{enumerate}
\end{theorem}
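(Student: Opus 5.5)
The proof will rest on a scale-invariant Morrey-type decay estimate for the Dirichlet energy. For a stable solution on $B_1$, I expect a decay of the form
\[
 r^{\beta_n-n}\int_{B_r(y)}|Du|^2\le C(n)\,\rho^{\beta_n-n}\int_{B_\rho(y)}|Du|^2 ,\qquad 0<r\le\rho/2 .
\]
For $n\le9$ this is presumably the content of Proposition~\ref{prop:decay}, with exponent $\beta_n=2-2\alpha_n$. For $n\ge10$ the exponent is $\beta_n=n-2-2\sqrt{n-1}$.

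The decay itself should come from the stability inequality tested with $\xi=(x\cdot Du)\,\eta$ and radial weights $|x|^{-a}$. The optimal choice of $a$ produces the exponent $n-2-2\sqrt{n-1}$, which is the same threshold as in the Joseph--Lundgren computation. It also produces a radial-derivative control $\int_{B_r}|x|^{2-n}u_r^2\lesssim\int_{B_{2r}\setminus B_r}|Du|^2$. If $f$ changes sign, the Pohozaev-type terms involve $f(u)$. I would absorb them by noting that the combination appearing with $x\cdot Du$ times $\Delta u$ is handled through the equation, so $f$ never enters explicitly. This is the point where the sign-changing difficulty must be handled, and I expect it to be the main obstacle in high dimensions: the decay must hold with $\beta_n$ exactly and without any $f^-$ term.

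Granting the decay, part (i) follows by chaining it over dyadic radii and covering. For $y\in B_{1/2}$ and $\rho=1/4$ this gives $r^{\beta-n}\int_{B_r(y)}|Du|^2\le C\,\|Du\|^2_{L^2(B_{3/4})}$ for every $\beta\ge\beta_n$, because larger $\beta$ only improves the inequality when $r\le1$. Radii $r>1/4$ are trivial. It remains to bound $\|Du\|_{L^2(B_{3/4})}\le C\,\mathfrak D(u)$, the annulus bound \eqref{eq:annulus-data} referred to in the Remark. I would obtain it by testing the equation with $u\varphi^2$ or $u^-$, cutting off in the annulus, and using stability together with $f^-$. I take it as established earlier. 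Strictly, the Morrey norm is quadratic, so the conclusion reads $\|Du\|_{M^{2,\beta}}\le C\,\mathfrak D(u)$ after taking square roots.

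For part (ii), when $n\le10$ we have $\beta_n\le2$ (with equality at $n=10$), so part (i) with $\beta=2$ gives $r^{2-n}\int_{B_r(y)}|Du|^2\le C\,\mathfrak D^2$. By Poincaré,
\[
 \fint_{B_r(y)}|u-u_{y,r}|\le Cr\Bigl(\fint_{B_r(y)}|Du|^2\Bigr)^{1/2}\le C\,\mathfrak D ,
\]
which bounds the BMO seminorm. The $L^1$ part of the norm follows from the $L^1(B_{1/2})$ control of $u$. I would get that control from the annulus $L^1$ bound plus the gradient bound, via Poincaré on $B_{3/4}$ with the mean taken over the annulus.

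For part (iii), I apply the Morrey embedding (Adams or Sobolev--Poincaré) to $u-u_{y,r}$ on each ball. If $Du\in M^{2,\beta}$ with $\beta>2$, then $u\in M^{p,\lambda}$ with $\lambda=2p/(p-2)$, provided $\lambda\ge\beta-2$, i.e.\ $p\le2\beta/(\beta-2)$, and $p\ge2n/(n-2)$ from the Sobolev range.

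Concretely, on $B_r(y)$ Sobolev gives
\[
 \Bigl(\int_{B_r(y)}|u-u_{y,r}|^{2^*}\Bigr)^{1/2^*}\le C\,\|Du\|_{L^2(B_r(y))}\le C\,r^{(n-\beta)/2}\mathfrak D .
\]
For general $p$ I would use Adams' Riesz-potential theorem on Morrey spaces, $I_1\colon M^{2,\beta}\to M^{p,\lambda}$ with $1/p=1/2-1/\beta$ at the endpoint, and interpolate with Hölder below it. The averages $u_{y,r}$ are controlled by telescoping: each step changes the average by at most $C\,r^{(2-\beta)/2}\mathfrak D$, and these sum to the admissible rate.

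At $n=10$ we have $\beta_n=2$, which yields all finite $p$ through the BMO bound and John--Nirenberg. For $n\ge11$ the cap is $p_n=2\beta_n/(\beta_n-2)$.
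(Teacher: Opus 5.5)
Your proposal is correct and follows essentially the paper's route: Proposition~\ref{prop:decay} (equivalently \eqref{eq:critical-morrey}) combined with \eqref{eq:annulus-data} gives (i); Poincar\'e with $\beta=2$ gives (ii); and Adams' embedding $I_1:M^{2,\beta}\to M^{2\beta/(\beta-2),\beta}$ with $\beta=2p/(p-2)\in[\beta_n,n]$ gives (iii). The paper applies Adams once, via $|u-u_S|\le C(n)I_1(|Du|\mathbf 1_G)$ on $G=B_{7/8}$ with $S=B_{7/8}\setminus\ol{B_{3/4}}$, which spares you the ball-by-ball telescoping of averages; also note a small slip in your (iii): the condition equivalent to $p\le 2\beta/(\beta-2)$ is $\lambda=2p/(p-2)\ge\beta$, not $\lambda\ge\beta-2$.
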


\begin{remark}
Several remarks concerning Theorem~\ref{thm:morrey} are in order:
\begin{enumerate}
  \item[(a)] For $n \ge 11$, the ranges of the exponents $\beta$ in~(i) and $p$ in~(iii) are sharp; we refer to \cite[Sec.\,6]{BrezisVazquez1997} and \cite[Sec.\,1.3]{CabreFigalliRosOtonSerra2020} for a detailed discussion.
  
  \item[(b)] Theorem~\ref{thm:morrey} extends \cite[Thms.\,1.9 and 7.1]{CabreFigalliRosOtonSerra2020} and \cite[Thm.\,1.2]{PengZhangZhou2024} in two directions: it accommodates general nonlinearities $f$, and the right-hand side $\mathfrak D(u)$ requires control only on an annulus rather than the full ball.

  \item[(c)] In dimension $n=10$, combining the $\BMO$ bound in~(ii) with the John--Nirenberg inequality \cite{JohnNirenberg1961} yields the optimal $L^p$ estimate
  \[
  \|u\|_{L^p(B_{1/2})}
  \le C(n,p)\mathfrak D(u),\qquad\forall\, 0<p<\infty
  \]
  (recall from the introduction that the endpoint $p=\infty$ fails for $n=10$). In a forthcoming work \cite{HuangZhouZouLp}, we establish the corresponding optimal $L^p$ estimate in higher dimensions $n\ge 11$:
  \[
  \|u\|_{L^p(B_{1/2})}
  \le C(n,p)\mathfrak D(u),\qquad\forall\, 0<p<\frac{2n}{\beta_n-2},
  \]
  where the optimality of the range of $p$ was shown by Cabr\'e and Capella \cite[Rem.\,1.7]{CabreCapella2006}. Moreover, we demonstrate in \cite{HuangZhouZouLp} that the term $f^-(u)$ in $\mathfrak D(u)$ cannot be dropped in general. This completely resolves a question posed in \cite{CabreFigalliRosOtonSerra2020}, which had remained open even for nonnegative nonlinearities $f\ge 0$.
\end{enumerate}
\end{remark}
  
\vskip0.1in

The next two theorems concern interior approximation and regularity for stable weak solutions, respectively. Theorem \ref{thm:approximation} improves upon \cite[Thm.\,1.3]{PengZhangZhou2026} in two respects: it does not require $f$ to be nondecreasing, and the radius $r_0$ is independent of $f$. This refinement allows us to establish interior regularity estimates for stable weak solutions under the sole additional assumption that $f$ is convex.

\begin{theorem}\label{thm:approximation}
Let $u\in H_{\loc}^{1}(B_1)$ be a stable weak solution of \eqref{eq:pde} in $B_1$, and assume that $f$ is convex. Then there exist $r_0=r_0(n)\in(0,1/4)$, a sequence of convex functions $f_k\in C^{0,1}_{\loc}(\RR)$, and a sequence of functions
\[
 u_k\in H^1(B_{r_0})\cap C^2(B_{r_0}),\qquad k\ge 1,
\]
satisfying the following properties:
\begin{enumerate}
\item[(i)] $f_k=f$ on $(-\infty,k]$, $f_k\le f_{k+1}\le f$ in $\RR$, and $f_k\uparrow f$ pointwise in $\RR$;

\item[(ii)] $u_k-u\in H_0^1(B_{r_0})$, and $u_k$ is a classical stable solution of
\[
 -\Delta u_k=f_k(u_k)\qquad\hbox{in }B_{r_0};
\]

\item[(iii)] $-M\le u_k\le u_{k+1}\le u$ a.e. in $B_{r_0}$ for some constant $M$, $u_k\uparrow u$ a.e. in $B_{r_0}$, $f_k(u_k)\to f(u)$ in $L^1(B_{r_0})$, and
\[
 u_k\longrightarrow u\qquad\hbox{strongly in }H^1(B_{r_0}).
\]
\end{enumerate}
\end{theorem}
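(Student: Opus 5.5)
Set $f_k:=f$ on $(-\infty,k]$ and $f_k(t):=f(k)+f'_-(k)(t-k)$ for $t>k$. This is the tangent-line truncation, where $f'_-(k)$ denotes the left derivative, which exists by convexity. Each $f_k$ is convex and globally Lipschitz on $[-M,\infty)$, and $f_k\le f_{k+1}\le f$ because convex functions lie above their tangent lines. This gives (i).

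To build $u_k$, I follow the classical monotone-iteration construction of Brezis--Cazenave--Martel--Ramiandrisoa, as adapted in \cite{PengZhangZhou2026}. We look for $u_k$ as a minimal solution lying below $u$. The relevant problem is
\[
 -\Delta v=f_k(v)\quad\text{in }B_{r_0},\qquad v-u\in H_0^1(B_{r_0}).
\]
The function $u$ is a weak supersolution of this problem, since $-\Delta u=f(u)\ge f_k(u)$. For a subsolution I take $\underline v$ solving $-\Delta \underline v=-g$ with boundary data $u$, where $g\ge f^-$ is arranged so that $f_k(\underline v)\ge -g$. For this, note that a convex $f$ has $f^-$ bounded on $[-M,\infty)$. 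However, $f$ may be decreasing and large as $t\to-\infty$, so a lower bound $-M$ is needed. The plan is therefore to truncate from below as well, replacing $u$ by $\max(u,-M)$. This is a supersolution of the truncated problem by Kato's inequality, since the maximum of two supersolutions is a supersolution once $f_k(-M)$ is handled. Then the data are bounded below, monotone iteration between sub- and supersolution produces a minimal solution $u_k$ with $-M\le u_k\le u$, and monotonicity in $k$ follows since $f_k\le f_{k+1}$. Classical regularity holds because $f_k(u_k)$ has at most linear growth from above; bootstrapping from $H^1$ gives $L^\infty_{\loc}$, and Schauder estimates then give $C^2$. One caveat: I expect interior smoothness only, with $H^1$ up to the boundary.

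Stability of $u_k$ is where the choice of $r_0$ enters and is the main obstacle. Because $u_k$ is minimal below the stable supersolution $u$, the standard argument gives it the "semi-stability of minimal solutions". Concretely, if $Q_{u_k}$ had a negative first eigenvalue on $B_{r_0}$, the eigenfunction could be used to produce a strictly smaller supersolution, contradicting minimality. Alternatively, convexity gives $f_k'(u_k)\le f'_-(u)$ wherever $u_k\le u$, since $f_k'\le f'$ and $f'$ is nondecreasing. Stability of $u$ then transfers directly:
\[
 Q_{u_k}(\xi)\ge Q_u(\xi)\ge 0 .
\]
This direct route makes minimality unnecessary for stability. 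The difficulty is at points with $u_k<-M$, or where the lower truncation changes $f$. Here $f$ may have a very negative derivative, which is harmless since it only helps stability. Hence stability should be unproblematic.

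The real issue is $r_0$ independent of $f$. It is needed to guarantee existence: we need the linearized operator to be coercive so that the iteration converges in $H^1$ and the lower truncation does not bind. My plan is to use a Poincaré-type bound: on $B_{r_0}$ with $r_0$ small depending only on $n$, stability of $u$ plus Hardy/Poincaré makes $-\Delta-f'_-(u)^-$ coercive. The dimension-only smallness presumably ensures the subsolution is below $u$ with $f$-independent control. For (iii), monotonicity gives $u_k\uparrow \tilde u\le u$, and energy bounds follow by testing the equation for $u_k-u$ with stability. To identify $\tilde u=u$, I use the uniqueness of stable solutions below a stable supersolution with the same boundary data, the standard convex argument. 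Here convexity gives
\[
 f(u)-f(\tilde u)\le f'_-(u)(u-\tilde u),
\]
and testing stability of $u$ with $u-\tilde u$ yields equality; strict convexity arguments or boundary data then force $u=\tilde u$. Convergence $f_k(u_k)\to f(u)$ in $L^1$ follows by monotone convergence, and strong $H^1$ convergence follows from convergence of the energies.
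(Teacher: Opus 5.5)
\textbf{There is a genuine gap.} It sits exactly where you need a lower bound $-M$ and a subsolution with the correct boundary values, and that is also where the dimension-only radius $r_0$ actually comes from.

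Your fix of replacing $u$ by $\max(u,-M)$ fails for three reasons.
\begin{enumerate}
\item[(a)] Kato's inequality runs the other way. It shows that the \emph{minimum} of supersolutions is a supersolution. For the maximum, one gets $-\Delta\max(u,-M)\le\mathbf 1_{\{u>-M\}}f(u)$, which is a subsolution-type inequality.
\item[(b)] Even if it were a supersolution, it changes the boundary trace. You would then lose $u_k-u\in H^1_0(B_{r_0})$ and could not recover $u$ as the limit.
\item[(c)] Nothing in your plan shows that the weak solution $u\in H^1_{\loc}$ is bounded below near $0$ in the first place.
\end{enumerate}
Your subsolution is also problematic. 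Defining it by $-\Delta\underline v=-g$ with $g$ ``arranged so that $f_k(\underline v)\ge -g$'' is circular. The claim that a convex $f$ has $f^-$ bounded on $[-M,\infty)$ is false; take $f(t)=-t$. Finally, the role you assign to $r_0$, namely coercivity of $-\Delta-f'_-(u)^-$ via stability and Hardy, is not the relevant one. The smallness of $r_0$ is needed for $-\Delta-A$, with a constant $A$ that your proposal never produces.

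\textbf{The missing idea is an affine minorant of $f$ whose slope is controlled by $n$ alone.} Test stability with one fixed $\eta\in C^1_c(B_1)$ and use that $f'_-$ is nondecreasing. This gives $\inf_t f'_-(t)\le\int|D\eta|^2/\int\eta^2<C_0(n)$. Hence there is $t_0<0$ with $A:=f'_-(t_0)<C_0$, and convexity gives $f(t)\ge At-K$ with $K=At_0-f(t_0)$.

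Now choose $r_0(n)$ so that $\lambda_1(B_{2r_0})>C_0$.
\begin{enumerate}
\item[(1)] Solve $-\Delta V=AV-K$ with $V-u\in H^1_0(B_{2r_0})$. Since $-\Delta(V-u)-A(V-u)\le0$ and $\lambda_1>A$, you get $V\le u$. As $V$ is smooth inside, $u^-\in L^\infty(B_{r_0})$.
\item[(2)] Repeat the construction on $B_{r_0}$. This yields $v$ with $-\Delta v=Av-K\le f_k(v)$, $v-u\in H^1_0(B_{r_0})$, and $-M\le v\le u$. This is the subsolution with the right trace. The scheme $-\Delta u_k^{(j)}+\gamma u_k^{(j)}=f_k(u_k^{(j-1)})+\gamma u_k^{(j-1)}$ starts from $v$, with $f_k+\gamma\,\mathrm{id}$ nondecreasing on $[-M,\infty)$, so no coercivity of the linearized operator is needed.
\item[(3)] The same minorant supplies the uniform $H^1$ bounds.
\item[(4)] It also supplies the domination $Au_k-K\le f_k(u_k)\le\max\{f(-M),f(u)\}$, which gives $f_k(u_k)\to f(u)$ in $L^1$. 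Monotone convergence does not apply here, because $f_k(u_k)$ need not be monotone in $k$ when $f$ is not monotone.
\end{enumerate}

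Your stability transfer $(f_k)'_-(u_k)\le f'_-(u_k)\le f'_-(u)$ is exactly the paper's argument.

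Your identification of $u_*=u$, however, is only sketched, and $f$ need not be strictly convex, so you cannot appeal to strict convexity. The paper proceeds in two cases.
\begin{enumerate}
\item[(i)] If $\operatorname*{ess\,sup}_{B_{r_0}}f'_-(u)<\infty$, then $u$ is already classical, and one takes $u_k=u$.
\item[(ii)] Otherwise, apply the strong maximum principle to $-\Delta(u-u_*)+(f'_-(u_*))^-(u-u_*)\ge0$. The coefficient is bounded only because $u_*\ge-M$. This gives $u\equiv u_*$ or $u>u_*$ a.e., and the latter is excluded by the argument of \cite{PengZhangZhou2026} (Step 5), which exploits the unbounded slope.
\end{enumerate}
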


\begin{theorem}\label{thm:weak}
Let $u\in H^1_{\loc}(B_1)$ be a stable weak solution of \eqref{eq:pde} in $B_1$, and assume that $f$ is convex. Then the conclusions of Theorems~\ref{thm:gradient}, \ref{thm:holder}, and~\ref{thm:morrey} remain valid, with the same exponents, when $\mathfrak D(u)$ on the right-hand side of each estimate is replaced by
\[
 \|u\|_{L^1(B_1)}+\|f^-(u)\|_{L^1(B_1)}.
\]
\end{theorem}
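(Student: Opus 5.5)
The plan is to apply the classical estimates of Theorems~\ref{thm:gradient}, \ref{thm:holder} and \ref{thm:morrey} to the approximations $u_k$ of Theorem~\ref{thm:approximation}, and then pass to the limit $k\to\infty$. Since the approximation only lives on the small ball $B_{r_0}$, $r_0=r_0(n)$, this has to be combined with scaling and a covering argument.

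\textbf{Step 1: one small ball.} Fix $x_0\in B_{1/2}$ and set $\rho:=r_0/4$. Then $B_{4\rho}(x_0)\subset B_1$, since $r_0<1/4$.
\begin{itemize}
\item[(a)] Translate and rescale, $v(y)=u(x_0+4\rho y)$. Then $v$ is a stable weak solution of $-\Delta v=(4\rho)^2 f(v)$ in $B_{1/(4\rho)}\supset B_1$, and the rescaled nonlinearity is still convex.
\item[(b)] Theorem~\ref{thm:approximation} is applied to $v$ (restricted to $B_1$) to produce $v_k$ on $B_{r_0}$.
\item[(c)] A second rescaling, $w_k(z)=v_k(r_0z)$, gives classical stable solutions in $B_1$ of $-\Delta w_k=g_k(w_k)$. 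Here $g_k=r_0^2(4\rho)^2f_k$ with $f_k\le f$, so $g_k^-\ge$ the rescaled $f^-$. That is the wrong direction, so we must control $f_k^-(v_k)$ separately.
\end{itemize}
Because $f_k=f$ on $(-\infty,k]$ and $f_k\le f$ is convex, there are two regions. On $\{v_k\le k\}$ we have $f_k^-(v_k)=f^-(v_k)$. On $\{v_k>k\}$ we use that $f_k$ is convex and agrees with $f$ up to $k$; presumably it is the tangent-line extension, so $f_k^-\le f^-$ there for $k$ large, or $f_k\ge f(k)+f'_-(k)(t-k)$.

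\textbf{Step 2: the key point $\int f_k^-(v_k)\to\int f^-(v)$.} We use $f_k(v_k)\to f(v)$ in $L^1$. Then $f_k^-(v_k)\to f^-(v)$ in $L^1$ as well, since $t\mapsto t^-$ is $1$-Lipschitz. Also $v_k\to v$ in $L^1$, and in $H^1$. Hence
\[
\mathfrak D(w_k)\le C(n)\bigl(\|u\|_{L^1(B_1)}+\|f^-(u)\|_{L^1(B_1)}\bigr)+o(1),
\]
and the same holds for the $L^q$ terms, after noting that the $L^q$ norm of $f^-(u)$ would need the analogous convergence. For the endpoint and H\"older statements only $L^1$ quantities enter, and for $q>1$ we use Fatou together with monotonicity.

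\textbf{Step 3: passing to the limit.}
\begin{itemize}
\item The gradient bound follows from Theorem~\ref{thm:gradient} applied to $w_k$, combined with weak lower semicontinuity, since $Dv_k\to Dv$ in $L^2$.
\item The Hessian $L^1$ bound passes to the limit as a total variation bound, and is upgraded using $D^2v\in L^1_{\loc}$ from the weak $W^{2,1}$ closure.
\item The H\"older seminorms and $\mathrm{BMO}$/Morrey norms are lower semicontinuous under a.e. convergence, and $v_k\uparrow v$ a.e.
\end{itemize}
This gives each estimate on $B_{r_0/2}$, in rescaled coordinates, around $x_0$. Covering $B_{1/2}$ by $C(n)$ such balls then yields the stated estimates on $B_{1/2}$. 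For the H\"older seminorm we patch local bounds with an $L^\infty$ bound to handle far-apart points, and Morrey/$\mathrm{BMO}$ patch similarly, with constants depending only on $n$ because $r_0=r_0(n)$.

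\textbf{Expected obstacle.} The hardest step is Step 2 for $q>1$, where $L^1$ convergence of $f_k(v_k)$ does not directly give $L^q$ convergence of $f_k^-(v_k)$. I would first try to prove pointwise that $f_k^-(v_k)\le f^-(v_k)+f^-(\min\text{ of }f)$-type bounds via convexity. Since $f$ is convex, $f^-$ is bounded on $\{t\ge t_0\}$ and is only large where $f$ is decreasing and negative, hence for $t$ below the minimum point. There $f_k=f$ once $k$ is large, and $v_k\le v$ gives $f^-(v_k)$ versus $f^-(v)$ with monotone comparison. That would yield domination, and the limit then follows by dominated convergence.
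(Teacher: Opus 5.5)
Your skeleton matches the paper's: apply the scaled classical estimates to the approximations from Theorem~\ref{thm:approximation}, pass to the limit, then scale and cover. Your handling of the $L^1$ data via $|a^--b^-|\le|a-b|$ and $f_k(v_k)\to f(v)$ in $L^1$ is correct. So are the gradient, H\"older, BMO and Morrey limits by Fatou, a.e.\ and $L^1$ convergence. There are, however, two genuine gaps.

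\emph{The Hessian estimate.} A bound $\sup_k\|D^2v_k\|_{L^1}<\infty$ only shows that $D^2v$ is a finite Radon measure with controlled total variation. There is no ``weak $W^{2,1}$ closure'': $\sqrt{x_1^2+k^{-2}}\to|x_1|$ has uniformly bounded Hessians in $L^1$, but the limit Hessian is singular. Nor does $\Delta v=-f(v)\in L^1$ give $D^2v\in L^1$, since Calder\'on--Zygmund fails at $p=1$. What you need is equi-integrability of $D^2v_k$, and the stability structure supplies it. By \eqref{eq:matrix}, $|D^2v_k|\le|f_k(v_k)|+\sqrt n\,\mathcal A_{v_k}$ (with the rescaled nonlinearity). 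Here $f_k(v_k)$ converges strongly in $L^1$, hence is uniformly integrable. Also $\mathcal A_{v_k}$ is bounded in $L^2(B_{r_0/2})$ by \eqref{eq:geometric} and the uniform $H^1$ bound. Dunford--Pettis then gives a subsequence converging weakly in $L^1(B_{r_0/2})$, and the limit is $D^2v$ by the $H^1$ convergence. Hence $D^2v\in L^1$, and the estimate follows from weak lower semicontinuity of the norm.

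\emph{The $L^q$ data for $q>1$.} Fatou goes the wrong way here: you need an upper bound for $\limsup_k\|f_k^-(v_k)\|_{L^q}$, so domination is required. Your convexity heuristic is also inaccurate. One always has $f^-\le(-\inf f)^+$, so $f^-$ is unbounded only when $\inf f=-\infty$; it is not ``large below the minimum point.'' The simple domination comes from an affine lower bound. Let $\ell(t)=f(t_0)+f'_-(t_0)(t-t_0)$ be a supporting line of $f$. Convexity of $f_k$ together with $f_k=f$ on $(-\infty,k]$ gives $f_k\ge\ell$ for all $k\ge t_0$; this is $f_k(t)\ge At-K$ in \eqref{eq:approx-fk-order}. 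Combined with $-M\le v_k\le v$, this yields $f_k^-(v_k)\le C(1+|v|)\in L^2$ uniformly in $k$. Dominated convergence, along an a.e.\ convergent subsequence, then gives convergence of the data for $1\le q\le2$. Dominated convergence cannot handle $q=\infty$. Instead, note that $\gamma_{n,q}=\gamma_{n,2}$ for every $q\ge2$ (for $n=2$ the admissible values coincide). So the cases $2<q\le\infty$ follow from $q=2$ by H\"older's inequality on $B_{5/8}$. This is exactly how the paper closes these two steps.
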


A natural next step is to investigate the boundary regularity of stable solutions. We address this problem in a forthcoming work \cite{HuangZhouZouBRSSVC} in the setting of semilinear elliptic equations with variable coefficients and sign-changing nonlinearities.

\vskip0.1in

Our final result establishes Liouville-type theorems for entire stable solutions in $\RR^n$. Rather than imposing pointwise growth or decay conditions at infinity, our approach relies only on $L^1$ bounds for $u$, $u^-$, and $f^-(u)$ over dyadic annuli. Liouville-type results play a central role in the theory of stable solutions and have been extensively studied. Without attempting an exhaustive survey, we refer the reader to \cite{CabreCapella2004,Farina200705,Farina200707,Villegas2007,DupaigneFarina2010,FarinaValdinoci2010,Dupaigne2011,FarinaSciunzi2017,DupaigneFarina2022,LiuWangWeiWu2024,PengZhangZhou2024} and the references therein.

\begin{theorem}\label{thm:liouville}
Let $u\in C^2(\RR^n)$ be a stable solution of \eqref{eq:pde} in $\RR^n$, and set $A_R:=B_{2R}\setminus\ol{B_R}$ for $R>0$. Then $u$ is constant if any of the following conditions holds as $R\to\infty$:
\begin{enumerate}
\item[(i)] $n=2$, and
\[
 \|u^-\|_{L^1(A_R)}=o(R^3),
 \qquad
 \|f^-(u)\|_{L^1(A_R)}=o(R).
\]

\item[(ii)] $3\le n\le9$, and
\[
 \|u^-\|_{L^1(A_R)}=o(R^{n+\alpha_n}),
 \qquad
 \|f^-(u)\|_{L^1(A_R)}=o(R^{n-2+\alpha_n}).
\]

\item[(iii)] $n=10$, and
\[
 \|u^-\|_{L^1(A_R)}=o(R^n\log R),
 \qquad
 \|f^-(u)\|_{L^1(A_R)}=o(R^{n-2}).
\]

\item[(iv)] $n\ge11$, and, with $\kappa_n:=(2-\beta_n)/2<0$,
\[
 \|u\|_{L^1(A_R)}=o(R^{n+\kappa_n}),
 \qquad
 \|f^-(u)\|_{L^1(A_R)}=o(R^{n-2+\kappa_n}).
\]
\end{enumerate}
\end{theorem}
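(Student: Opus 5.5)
We argue by scaling, applying the interior estimates of Theorems~\ref{thm:holder} and~\ref{thm:morrey} to rescaled functions. For $R>0$ set $u_R(x):=u(4Rx)$. Then $u_R$ is a stable solution of $-\Delta u_R=f_R(u_R)$ in $B_1$, where $f_R(t):=(4R)^2f(t)$; stability is preserved because $(f_R)'_-=(4R)^2f'_-$ and the quadratic form scales accordingly.

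The rescaled data are
\[
\mathfrak D(u_R)=(4R)^{-n}\|u\|_{L^1(B_{4R}\setminus B_{3R})}+(4R)^{2-n}\|f^-(u)\|_{L^1(B_{4R}\setminus B_{3R})}.
\]
The annulus $B_{4R}\setminus B_{3R}$ lies in $A_{2R}$, so the $f^-$ hypotheses control the second term directly. The difficulty is that in (i)--(iii) only $u^-$ is controlled, not $|u|$.

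To remove $u^+$ we add a constant. Since the equation is translation invariant in $u$, $u-c$ solves the equation with nonlinearity $f(\cdot+c)$, remains stable, and satisfies $f^-$ bounds that are unchanged when evaluated along the solution. Hence the H\"older seminorm estimate can be applied to $u-c$ for any $c$. Instead of \eqref{eq:main-holder}, I would prove a seminorm version:
\[
[u_R]_{C^{0,\alpha_n}(B_{1/2})}\le C\bigl(\|u_R^-\|_{L^1(B_1\setminus B_{3/4})}+\|f_R^-(u_R)\|_{L^1(B_1\setminus B_{3/4})}\bigr).
\]
The route to this is superharmonicity up to the $f^-$ error. We have $-\Delta u=f(u)\ge -f^-(u)$, so $u$ plus a Newtonian potential of $f^-(u)$ is superharmonic. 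A weak Harnack or mean-value inequality for supersolutions then bounds $\int u^+$ on inner annuli by $\int u^-$ together with $\inf u$ and the potential term. I expect this to be the main technical step: the paper presumably contains an estimate of this type, namely that $\|Du\|_{L^2(B_{3/4})}$ is controlled by the annular data in \eqref{eq:annulus-data}, and I would refine it to use $u^-$ and oscillation only, after subtracting a suitable constant such as a mean of $u$. Combining this with \eqref{eq:main-holder0} gives the displayed bound.

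Scaling back, for $n\le9$ this yields
\[
[u]_{C^{\alpha_n}(B_{R})}\le C\bigl(R^{-n-\alpha_n}\|u^-\|_{L^1(A)}+R^{2-n-\alpha_n}\|f^-(u)\|_{L^1(A)}\bigr),
\]
where $A$ is a union of boundedly many dyadic annuli around radius $R$. By hypothesis (ii) the right side is $o(1)$. For fixed $x,y$, letting $R\to\infty$ gives $u(x)=u(y)$, so $u$ is constant.

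For $n=2$, the exponent $\alpha_2=1/2$ is weaker than the hypothesis $o(R^3)$ requires. Here I would instead use the gradient bound (Theorem~\ref{thm:gradient}, or the Lipschitz-type estimate available when $n=2$) with $\alpha=1$ scaling, which gives $\|Du\|_{L^2(B_R)}$-type decay. Alternatively, I would use the $L^2$ Dirichlet energy bound $\int_{B_R}|Du|^2\le C R^{n-2}\bigl(R^{-n-1}\|u^-\|+\dots\bigr)^2$ with oscillation normalization; the exponents $3=n+1$ and $1=n-1$ match $\alpha=1$.

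For $n=10$, I would apply the BMO estimate, Theorem~\ref{thm:morrey}(ii), to $u_R-c$. The rescaled mean oscillation on $B_{R}$ is then $o(\log R)$. Combined with the dyadic growth of averages of a BMO function, $|u_{B_r}-u_{B_{2r}}|\le C[u]$, this forces the oscillation on fixed balls to vanish. The precise bookkeeping uses Theorem~\ref{thm:morrey}(i) with $\beta=\beta_{10}=0$, that is, $\|Du\|_{L^2(B_{R/2})}^2\lesssim \mathfrak D^2$ scaled, to obtain $\int_{B_1}|Du|^2=0$.

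For $n\ge11$, I would use Theorem~\ref{thm:morrey}(i) with $\beta=\beta_n$. The scaled estimate gives
\[
\|Du\|^2_{L^2(B_{1})}\le R^{n-2-\beta_n}\cdot\bigl(R^{-n}\|u\|_{L^1}+R^{2-n}\|f^-\|_{L^1}\bigr)^2,
\]
and with $\kappa_n=(2-\beta_n)/2$ the hypotheses make this $o(1)$. The conclusion is $Du\equiv0$.
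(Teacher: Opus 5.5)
Only part (iv) goes through as written; (i)--(iii) have genuine gaps.

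\emph{Control of $u^+$ in (i)--(iii).} Your displayed seminorm estimate, with only $u_R^-$ and $f_R^-(u_R)$ on the right, is the central step for (i)--(iii), and it is false. Take $u=1+x_1$: it is harmonic, hence stable with $f\equiv0$, and positive in $B_1$. So the right-hand side vanishes while $[u]_{C^{0,\alpha}(\ol{B_{1/2}})}\ge1$. No estimate at a single scale can control the oscillation of $u$ by $u^-$ and $f^-(u)$. The weak Harnack inequality for nonnegative supersolutions bounds averages only by $\inf u$, which the data do not control. Superharmonicity must instead be used globally, exploiting that $u$ is entire; this is Lemma~\ref{lem:mass-conversion}. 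The spherical mean $m(r)=\fint_{\partial B_r}u$ satisfies $m'(r)\le|\mathbb S^{n-1}|^{-1}r^{1-n}\|f^-(u)\|_{L^1(B_r)}$. Integrating from a fixed radius gives $m(r)\le C(u,\varepsilon)+\varepsilon r^{\gamma}$ (respectively $C(u,\varepsilon)+\varepsilon\log r$ for $n=10$). Then $|u|=u+2u^-$ yields $\|u\|_{L^1(B_R)}=o(R^{n+\gamma})$ (respectively $o(R^n\log R)$). The constant depends on $u$ through $m$ at a fixed radius, which is exactly the information no rescaled interior estimate can supply. Once you have this two-sided bound, your scaling argument for (ii) closes; it is equivalent to the paper's combination of \eqref{eq:mass-energy} with \eqref{eq:energy-decay}.

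\emph{The cases $n=2$ and $n=10$.} Even with two-sided control, scaling does not suffice here. For $n=2$ you get $\int_{B_R}|Du|^2=o(R^2)$, but the decay exponent $n-2+2\alpha_2=1$ only gives $\int_{B_r}|Du|^2=o(R)$. This cannot be improved. $u=\tanh x_1$ with $f(t)=2t(1-t^2)$ is a bounded, nonconstant, entire stable solution with $\int_{B_r}|Du|^2\asymp r$ for large $r$. So there is no Lipschitz-type ($\alpha=1$) decay, and Theorem~\ref{thm:gradient} only gives finite integrability of $Du$. This example satisfies every hypothesis of (i) except that $\|f^-(u)\|_{L^1(A_R)}\asymp R$, so the $o(R)$ condition must enter through rigidity, not scaling. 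The paper inserts a logarithmic cutoff into \eqref{eq:geometric} to get $\mathcal A_u\equiv0$ from $\int_{B_R}|Du|^2=O(R^2)$, hence $u=g(e\cdot x)$ (Lemma~\ref{lem:planar}). The $o(R)$ condition then forces $f(g)\ge0$, so $g$ is concave, and a nonconstant concave $g$ contradicts $\|u^-\|_{L^1(A_R)}=o(R^3)$.

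For $n=10$, note that $\beta_{10}=2$, not $0$, so the $M^{2,2}$ and $\BMO$ bounds are scale-invariant. With data $\mathfrak D(u_R)=o(\log R)$ they give only $r^{2-n}\int_{B_r}|Du|^2=o(\log^2R)$ and $[u]_{\BMO}=o(\log R)$. The inequality $|u_{B_r}-u_{B_{2r}}|\le C[u]_{\BMO}$ controls the growth of averages, not the oscillation on a fixed ball. The missing idea is averaging over $N\sim\log_2R$ dyadic shells. By \eqref{eq:shell-energy}, $(r^{2-n}\int_{B_r}|Du|^2)^{1/2}$ is bounded by the shell quantity at every $\rho=2^jR$, $0\le j<N$. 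Hence it is bounded by $CN^{-1}\int_{B_{R^2}\setminus B_R}|Du||x|^{1-n}\,dx+o(1)$. Then \eqref{eq:shell-splitting} together with \eqref{eq:log-mass} shows this weighted integral is $o(\log R)$, so the average is $o(1)$.

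\emph{Part (iv).} This part is essentially the paper's argument (\eqref{eq:annulus-data} plus \eqref{eq:radial-high}). One correction: the scaled prefactor is $R^{\beta_n-2}=R^{-2\kappa_n}$, not $R^{n-2-\beta_n}$; with the correct power your conclusion $\int_{B_1}|Du|^2=o(1)$ holds.
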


\begin{remark}
We make several remarks on Theorem~\ref{thm:liouville}.
\begin{enumerate}
\item[(a)] For $2\le n\le9$, Theorem~\ref{thm:liouville} extends \cite[Thm.\,2]{DupaigneFarina2022} by allowing general, possibly sign-changing nonlinearities $f$ and explicit polynomial growth bounds on $u^-$. In particular, when $n=2$, the growth condition $\|u^-\|_{L^1(A_R)}=o(R^3)$ is sharp, as shown by the harmonic function $u(x)=x_1$ with $f\equiv0$.

\item[(b)] For $n=10$, Theorem~\ref{thm:liouville} generalizes both \cite[Thm.\,1]{DupaigneFarina2022}, which assumes $f\ge0$ and boundedness of $u^-$, and \cite[Thm.\,1.5]{PengZhangZhou2024}, which assumes $f\ge0$ and imposes the two-sided condition $\|u\|_{L^1(A_R)}=o(R^n\log R)$. Moreover, the growth condition $\|u^-\|_{L^1(A_R)}=o(R^n\log R)$ is sharp; see \cite[Exam.\,3.1]{Villegas2007}.

\item[(c)] For $n\ge11$, Theorem~\ref{thm:liouville} extends \cite[Thm.\,1.5]{PengZhangZhou2024} to general, possibly sign-changing nonlinearities $f$. As shown in \cite[Exam.\,3.1]{Villegas2007}, the growth condition $\|u\|_{L^1(A_R)}=o(R^{n+\kappa_n})$ is also sharp.

\item[(d)] Finally, we emphasize that Liouville-type theorems for stable solutions in $\RR^n$ generally require suitable lower bounds on both $f$ and $u$, and that the relevant conditions depend strongly on the dimension. We refer the reader to \cite[Exam.\,3.1]{Villegas2007}, \cite[Rem.\,3]{DupaigneFarina2022}, and \cite[p.\,3339]{PengZhangZhou2024} for further discussion and counterexamples.
\end{enumerate}
\end{remark}

\subsection{Strategy of the proofs}

We now outline the main ideas of our proofs. We begin by establishing the fundamental gradient and Hessian estimates in Lemma~\ref{lem:endpoint}. Part of our argument builds on the level-set method developed in \cite{Cabre2010,CabreFigalliRosOtonSerra2020}, which combines the geometric inequality for stable solutions with the Michael--Simon Sobolev inequality. In our setting, allowing the nonlinearity to change sign requires new arguments to handle its negative part. Together with Proposition~\ref{prop:annulus}, these estimates allow us to control the interior gradient energy in terms of the $L^1$ norms of $u$ and $f^-(u)$ over outer annuli.

A key ingredient in our regularity theory is the energy decay estimate in Proposition~\ref{prop:decay}, whose right-hand side involves only the gradient energy. In dimensions $2\le n\le5$, we develop a direct approach based on rotational vector fields. Specifically, for $3\le n\le5$, we consider the rotational derivatives $W_{ij}=x_i\partial_j u-x_j\partial_i u$. Combining the equations satisfied by these functions with the stability inequality yields a decay estimate for the tangential gradient. Averaging over the centers of rotation then gives the corresponding decay estimate for the full gradient. The two-dimensional case follows by a dimension-lifting argument. This approach also yields the explicit H\"older exponents in \eqref{eq:alpha}.

The main analytical difficulty arises in dimensions $6\le n\le9$, where we establish the quantitative rigidity result in Proposition~\ref{prop:rigidity}. Although its statement parallels \cite[Lem.\,3.1]{CabreFigalliRosOtonSerra2020}, our proof takes a substantially different approach, avoiding both compactness arguments and the nonnegativity assumption on $f$. We first combine the stability inequality with the Riesz representation theorem in Lemma~\ref{lem:reaction} to express certain terms involving $f(u)$ as divergences of vector fields with small $L^1$ norms. Next, Lemma~\ref{lem:primitive} combines the weak $(1,1)$ estimate for singular integrals with a $W^{1,1}$ bound to show that a suitable auxiliary function is close to a constant in $L^1$. Applying these results to a primitive $F$ of $f$, we obtain control of the gradient energy on sets of small measure.

To estimate the remaining energy, we consider the vector field $V=|Du|Du$. Using the divergence representation from Lemma~\ref{lem:reaction}, we derive estimates for the translation differences of $V$. We then mollify $V$ and construct a suitable regularized inverse of the map $p\mapsto|p|p$ to approximate $Du$. This reduces the remaining energy estimate to controlling the divergence of the approximating vector field. A careful analysis of this divergence, together with the preceding estimates, completes the proof of Proposition~\ref{prop:rigidity}.

For $6\le n\le9$, we deduce Proposition~\ref{prop:decay} by combining the radial energy estimates with averaging over the scaling parameter to find a scale at which the hypotheses of Proposition~\ref{prop:rigidity} hold. Proposition~\ref{prop:rigidity} then yields a fixed-scale contraction of the gradient energy, which gives the desired decay estimate by iteration. Related scaling and averaging ideas also appear, in a different setting, in Cabr\'e's treatment of boundary radial Poincar\'e inequalities~\cite[Thm.\,1.9]{Cabre2022}. For $n\ge10$, the corresponding energy bounds follow from the radial estimates in Lemma~\ref{lem:radial}.

With these energy estimates in hand, Theorem~\ref{thm:holder} follows from a Morrey-type embedding theorem. To prove Theorem~\ref{thm:gradient}, we combine the energy decay with the Morrey--BV interpolation inequality in Lemma~\ref{lem:morrey-bv}, a weighted Hessian estimate, and Simon's absorbing lemma. Theorem~\ref{thm:morrey} then follows from these estimates together with Poincar\'e inequalities, the John--Nirenberg inequality, and Riesz-potential estimates in Morrey spaces. The proof of this last theorem is partly inspired by \cite{CabreFigalliRosOtonSerra2020,Peng2024,PengZhangZhou2024}.

To demonstrate the sharpness of our regularity estimates, we construct several explicit families of stable solutions in Theorem~\ref{thm:obstructions}. For Theorem~\ref{thm:approximation}, we adapt the local approximation scheme of Peng, Zhang, and Zhou \cite[Thm.\,1.3]{PengZhangZhou2026}, which assumes that $f$ is nondecreasing and convex. By introducing a suitable affine lower bound for $f$ and refining the comparison argument, we remove the assumption that $f$ is nondecreasing. Moreover, the approximation radius can be chosen to depend only on the dimension. Theorem~\ref{thm:weak} then follows by applying our regularity estimates for classical solutions to the approximating sequence and passing to the limit.

Finally, to prove Theorem~\ref{thm:liouville}, we first establish several growth criteria that convert $L^1$ bounds on $u^-$ and $f^-(u)$ over annuli into estimates for $u$ and its gradient over large balls. In dimension $n=2$, we combine the resulting gradient energy bound with a one-dimensional symmetry argument, while for $3\le n\le9$ the conclusion follows from Proposition~\ref{prop:decay}. For $n\ge10$, we use high-dimensional energy estimates from Peng, Zhang, and Zhou \cite{PengZhangZhou2024}. In the critical dimension $n=10$, we further refine their annular argument to accommodate $f^-(u)$ and to replace their two-sided growth condition on $u$ by a one-sided condition on $u^-$. Together, these arguments yield the Liouville-type theorems in all dimensions.

\subsection{Organization of the paper}

The remainder of the paper is organized as follows. In Section~\ref{sec2}, we establish the auxiliary estimates and inequalities needed in the subsequent proofs, including the key energy decay and rigidity results. Section~\ref{sec3} contains the proofs of Theorems~\ref{thm:gradient}, \ref{thm:holder}, and~\ref{thm:obstructions}. In Section~\ref{sec4}, we prove Theorem~\ref{thm:morrey} on BMO and Morrey estimates. Section~\ref{sec5} is devoted to the approximation theorem and the regularity of stable weak solutions, namely Theorems~\ref{thm:approximation} and~\ref{thm:weak}. In Section~\ref{sec6}, we establish the Liouville-type theorems in Theorem~\ref{thm:liouville}. Finally, Appendix~\ref{app:radial-poincare} extends Cabr\'e's radial Poincar\'e inequality to general nonlinearities.

\section{Auxiliary results}\label{sec2}
In this section, we collect auxiliary results that will be used in the proofs of Theorems~\ref{thm:gradient}--\ref{thm:obstructions} in Section \ref{sec3}. In the first subsection, we provide basic estimates and functional inequalities. The second subsection is devoted to energy-decay estimates.

\subsection{Basic estimates and functional inequalities}\label{sec2.1}
Our first lemma lists various useful identities and inequalities for stable solutions of \eqref{eq:pde}.
\begin{lemma}\label{lem:identities}
Let $u\in C^2(\Omega)$ be a stable solution of \eqref{eq:pde} in an open set $\Omega\subseteq\RR^n$. Put $p=Du$, $w=|p|$, and define
\[
 \mathcal A_u^2=|D^2u|^2-\frac{|D^2u\,p|^2}{w^2},\qquad
 B=D^2u+f(u)\frac{p\otimes p}{w^2}
 \quad\hbox{on }\{w>0\},
\]
with both quantities equal to zero on $\{w=0\}$. Then
\begin{equation}\label{eq:geometric}
 \int_\Omega\mathcal A_u^2\eta^2\le\int_\Omega w^2|D\eta|^2
 \qquad(\forall\;\eta\in C_c^{0,1}(\Omega)),
\end{equation}
and, almost everywhere,
\begin{equation}\label{eq:matrix}
 \tr B=0,\quad |B|^2\le n\mathcal A_u^2,\quad
 |D^2u|\le|\Delta u|+\sqrt n\,\mathcal A_u.
\end{equation}
If we set $F'=f$ and $K=w^2/2+F(u)$, then
\begin{equation}\label{eq:stress}
 DK=Bp,\qquad |DK|\le\sqrt{n-1}\,w\mathcal A_u,\qquad
 \Div(p\otimes p-w^2I+KI)=0\quad\hbox{in }\mathcal D'(\Omega).
\end{equation}
Here, for a matrix field $M=(M_{ij})$ on $\Omega$, $\Div M$ is a vector field and we write
$(\Div M)_i:=\sum_{j=1}^n\partial_jM_{ij}$ for its row-wise divergence. For $z=x\cdot Du$, it holds that
\begin{equation}\label{eq:jacobi}
 (-\Delta-f'_-(u))\partial_i u=0,\qquad
 (-\Delta-f'_-(u))z=2f(u),
\end{equation}
\begin{equation}\label{eq:dilation}
 \begin{split}
 Q_u(z\eta)={}&\int_\Omega z^2|D\eta|^2
 +\int_\Omega w^2\bigl((2-n)\eta^2-x\cdot D(\eta^2)\bigr)\\
 &+2\int_\Omega z\,p\cdot D(\eta^2).
 \end{split}
\end{equation}
\end{lemma}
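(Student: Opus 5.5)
The lemma collects several independent identities, and I would prove them in turn. The only genuinely nonsmooth points are that $f$ is merely Lipschitz, so $f(u)$ is only $C^{0,1}_{\loc}$, and that the stability inequality involves $f'_-$.

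\emph{Regularity and \eqref{eq:jacobi}.} Since $-\Delta u=f(u)\in C^{0,1}_{\loc}$, Schauder theory gives $u\in C^{2,\alpha}_{\loc}$, and Calder\'on--Zygmund estimates applied to the differentiated equation give $u\in W^{3,p}_{\loc}$ for every $p<\infty$. Differentiating the equation yields $-\Delta\partial_iu=\partial_i[f(u)]$. By the chain rule for Lipschitz functions, $\partial_i[f(u)]=f'(u)\partial_iu$ a.e., with the convention that $\partial_i[f(u)]=0$ a.e. on $\{\partial_iu=0\}$. At points where $\partial_iu\neq0$, the difference quotient along the direction of $x_i$ shows that $f$ is differentiable at $u(x)$ in the appropriate sense. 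A cleaner route is to note that for a.e. $x$, either $Du(x)=0$ or $f$ is differentiable at $u(x)$, because the set of $t$ where $f$ is not differentiable is null and its preimage has $Du=0$ a.e. (coarea formula). Hence $\partial_i[f(u)]=f'_-(u)\partial_iu$ a.e., which gives the first identity in \eqref{eq:jacobi}. For $z=x\cdot Du$, the identity $\Delta z=x\cdot D\Delta u+2\Delta u$ then gives $(-\Delta-f'_-(u))z=2f(u)$.

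\emph{Geometric inequality \eqref{eq:geometric}.} I would test stability with $\xi=w\eta$. This is the standard Sternberg--Zumbrun computation, and it is legitimate because $w$ is Lipschitz and $Q_u$ extends to $H^1_0$ test functions by density. Expanding $Q_u(w\eta)$ and using $\sum_i\partial_iu\,(-\Delta-f'_-)\partial_iu=0$ together with $\Delta(w^2/2)=|D^2u|^2+p\cdot D\Delta u$, one obtains
\[
\int(|D^2u|^2-|Dw|^2)\eta^2\le\int w^2|D\eta|^2 .
\]
Since $|Dw|=|D^2u\,p|/w$ on $\{w>0\}$ and $Dw=0$ a.e. on $\{w=0\}$, this is exactly \eqref{eq:geometric}. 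I expect this density/regularity bookkeeping, needed to apply \eqref{eq:stability} to Lipschitz test functions and to integrate by parts with $u\in W^{3,p}_{\loc}$ only, to be the main (mild) obstacle.

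\emph{Pointwise algebra \eqref{eq:matrix}.} On $\{w>0\}$, set $\nu=p/w$. Then
\[
\tr B=\Delta u+f(u)=0 .
\]
Complete $\nu$ to an orthonormal basis $\{\nu,e_2,\dots,e_n\}$. Since $\mathcal A_u^2=|D^2u|^2-|D^2u\,\nu|^2$, it equals the sum of the squares of the entries $u_{ab}$ with $a,b\ge2$, plus the entries $u_{\nu a}$ with $a\ge2$. The matrix $B$ agrees with $D^2u$ except in the $\nu\nu$ entry, which is $u_{\nu\nu}+f=-\sum_{a\ge2}u_{aa}$. Therefore
\[
|B|^2=\Bigl(\sum_{a\ge2}u_{aa}\Bigr)^2+2\sum_{a\ge2}u_{\nu a}^2+\sum_{a,b\ge2}u_{ab}^2\le(n-1)\sum_{a\ge2}u_{aa}^2+2\sum_{a\ge2}u_{\nu a}^2+\sum_{a,b\ge2}u_{ab}^2\le n\mathcal A_u^2 .
\]
Moreover $D^2u=B-f\,\nu\otimes\nu$ with $|f|=|\Delta u|$, so $|D^2u|\le|\Delta u|+\sqrt n\,\mathcal A_u$. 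On $\{w=0\}$ we have $D^2u=0$ a.e., so all claims hold there trivially.

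\emph{\eqref{eq:stress} and \eqref{eq:dilation}.} We compute
\[
DK=D^2u\,p+f(u)p=Bp ,
\]
since $(p\otimes p/w^2)p=p$. Because $Bp=w B\nu$ and $\tr B=0$, we get $\nu\cdot B\nu=-\sum_{a\ge2}u_{aa}$ and $e_a\cdot B\nu=u_{a\nu}$. The same Cauchy--Schwarz argument as above then gives $|B\nu|^2\le(n-1)\mathcal A_u^2$, hence $|DK|\le\sqrt{n-1}\,w\mathcal A_u$. For the divergence identity,
\[
\partial_j(\partial_iu\,\partial_ju)=\partial_{ij}u\,\partial_ju+\partial_iu\,\Delta u,\qquad \partial_i(w^2)=2\partial_{ij}u\,\partial_ju,\qquad \partial_iK=\partial_{ij}u\,\partial_ju+f\,\partial_iu .
\]
Combining these, $\Div(p\otimes p-w^2I+KI)=p(\Delta u+f)=0$.

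Finally, \eqref{eq:dilation} follows from the identity
\[
Q_u(z\eta)=\int z^2|D\eta|^2+\int z\eta^2(-\Delta-f'_-)z=\int z^2|D\eta|^2+2\int f(u)z\eta^2 ,
\]
obtained by integrating by parts. It then remains to write
\[
f(u)z=-\Delta u\,(x\cdot p)
\]
and integrate by parts once more, using the Pohozaev-type identity $\Delta u\,(x\cdot p)=\Div\bigl((x\cdot p)p-\tfrac{w^2}{2}x\bigr)+\tfrac{n-2}{2}w^2$. This produces the terms $(2-n)w^2\eta^2$, $-w^2x\cdot D(\eta^2)$, and $2zp\cdot D(\eta^2)$ in \eqref{eq:dilation}.
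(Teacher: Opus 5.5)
Your proposal is correct and follows the paper's proof: your orthonormal-frame computation for \eqref{eq:matrix} and \eqref{eq:stress} is the paper's block decomposition of $D^2u$ relative to $\RR e\oplus e^\perp$, with the same bound $(\tr H)^2\le(n-1)|H|^2$. For \eqref{eq:geometric}, \eqref{eq:jacobi} and \eqref{eq:dilation} the paper simply cites Cabr\'e--Figalli--Ros-Oton--Serra, and what you write out (the a.e.\ chain rule for $f(u)$, the Sternberg--Zumbrun test $\xi=w\eta$, and the Pohozaev-type integration by parts with $\xi=z\eta$) is the standard argument behind those citations.
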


\begin{proof}
The inequality \eqref{eq:geometric} follows from \cite[Lem.\,2.3]{CabreFigalliRosOtonSerra2020}, while the estimates \eqref{eq:jacobi} and \eqref{eq:dilation} follow from \cite[Lem.\,A.3]{CabreFigalliRosOtonSerra2020} and \cite[Lem.\,2.1]{CabreFigalliRosOtonSerra2020}, respectively.

It remains to prove \eqref{eq:matrix} and \eqref{eq:stress}. On $\{w=0\}$, we have $D^2u=0$ and hence $f(u)=0$ almost everywhere. Therefore \eqref{eq:matrix} and \eqref{eq:stress} clearly hold.

At a point where $w>0$, we set $e=p/w$ and decompose $D^2u$ relative to $\RR e\oplus e^\perp$ as
\[
 D^2u=\begin{pmatrix}a&b^T\\ b&H\end{pmatrix}.
\]
Since $\Delta u=-f(u)$, we have $\mathcal A_u^2=|b|^2+|H|^2$ and
\[
 B=\begin{pmatrix}-\tr H&b^T\\ b&H\end{pmatrix}.
\]
Hence $\tr B=0$ and $|B|^2=(\tr H)^2+2|b|^2+|H|^2\le n\mathcal A_u^2$. Moreover, $D^2u=B-f(u)e\otimes e$, and therefore $|D^2u|\le|\Delta u|+\sqrt n\,\mathcal A_u$. The estimate \eqref{eq:matrix} is proved.

Differentiating $K=w^2/2+F(u)$ gives $DK=D^2u\,p+f(u)p=Bp$. Since $|Be|^2=(\tr H)^2+|b|^2\le(n-1)\mathcal A_u^2$, it follows that $|DK|\le\sqrt{n-1}\,w\mathcal A_u$. Finally, using $\Delta u=-f(u)$, we obtain
\[
 \begin{aligned}
 \bigl[\Div(p\otimes p-w^2I+KI)\bigr]_i
 &=\sum_j\partial_j(\partial_i u\,\partial_j u)-\partial_i|Du|^2+\partial_iK\\
 &=\partial_i u\,(\Delta u+f(u))=0.
 \end{aligned}
\]
This proves \eqref{eq:stress}.
\end{proof}

Our next lemma provides a hypersurface Gagliardo--Nirenberg inequality. It will be used in a level set argument in the proof of Lemma \ref{lem:endpoint} below.

\begin{lemma}\label{lem:hypersurface-sobolev}
Let $m\ge1$, and let $\Sigma\subseteq\RR^{m+1}$ be a smooth $m$-dimensional hypersurface. Then for every compactly supported smooth function $\phi$ on $\Sigma$,
\begin{equation}\label{eq:surface-gn}
 \int_\Sigma |\phi|^{2+4/m}
 \le C(m)\left(\int_\Sigma\phi^2\right)^{2/m}
       \int_\Sigma\bigl(|D_\Sigma\phi|^2+\mathcal H^2\phi^2\bigr),
\end{equation}
where $\mathcal H$ denotes the trace mean curvature.
\end{lemma}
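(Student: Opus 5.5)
The plan is to derive \eqref{eq:surface-gn} from the Michael--Simon (Allard) Sobolev inequality on hypersurfaces, combined with Hölder interpolation. This mirrors how the usual Gagliardo--Nirenberg inequality in $\RR^m$ follows from the Euclidean Sobolev inequality.

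The starting point is the $L^1$ Michael--Simon inequality. For every compactly supported $C^1$ (or Lipschitz) function $\psi$ on $\Sigma$,
\[
 \left(\int_\Sigma|\psi|^{m/(m-1)}\right)^{(m-1)/m}\le C(m)\int_\Sigma\bigl(|D_\Sigma\psi|+|\mathcal H||\psi|\bigr)
\]
when $m\ge2$. When $m=1$ the left side is read as $\sup_\Sigma|\psi|$; this case can also be checked directly, since $\Sigma$ is then a curve and $\sup|\psi|\le\int_\Sigma|D_\Sigma\psi|$ along each component.

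\textbf{Case $m\ge2$.} I apply the inequality to $\psi=|\phi|^{s}$ with $s=(m+2)(m-1)/m^2\cdot\ldots$; rather than guessing $s$, I use the cleaner Nash-type route. Take $\psi=\phi^2$. Then
\[
 \|\phi^2\|_{L^{m/(m-1)}}\le C\int_\Sigma\bigl(2|\phi||D_\Sigma\phi|+|\mathcal H|\phi^2\bigr)\le C\|\phi\|_{L^2}\Bigl(\int_\Sigma|D_\Sigma\phi|^2+\mathcal H^2\phi^2\Bigr)^{1/2},
\]
where the last step is Cauchy--Schwarz. Write $E=\int(|D_\Sigma\phi|^2+\mathcal H^2\phi^2)$. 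This says $\|\phi\|_{L^{2m/(m-1)}}^2\le C\|\phi\|_2E^{1/2}$.

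Next, interpolate $L^{2+4/m}$ between $L^2$ and $L^{2m/(m-1)}$, which is admissible because $2<2+4/m\le 2m/(m-1)$ exactly when $m\ge 2$ (equality at $m=2$). This gives the bound in terms of $\|\phi\|_2$ and $E$, and a scaling/exponent count fixes the powers. Checking the count on $m=2$ directly: there $\int|\phi|^4\le (C\|\phi\|_2E^{1/2})^2=C\|\phi\|_2^2E$, which matches \eqref{eq:surface-gn}.

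For general $m$ the exponent bookkeeping from this plain interpolation does not produce exactly $(\int\phi^2)^{2/m}E$. The cleaner fix is to apply the $L^1$ inequality to $\psi=|\phi|^{2+4/m}$'s appropriate root, namely $\psi=|\phi|^{\,2(m+1)/m}$, so that $\psi^{m/(m-1)}$ is not the target. The alternative I will actually use is the standard route: the $L^2$ Michael--Simon inequality
\[
\|\phi\|_{L^{2m/(m-2)}}^2\le C E\quad (m\ge3)
\]
is obtained from the $L^1$ version with $\psi=|\phi|^{2(m-1)/(m-2)}$ and Cauchy--Schwarz. Hölder's inequality with $\theta=m/(m+2)$ then gives
\[
\int|\phi|^{2+4/m}\le\Bigl(\int\phi^2\Bigr)^{2/m}\|\phi\|_{2m/(m-2)}^{2}\le C\Bigl(\int\phi^2\Bigr)^{2/m}E.
\]
This is exactly \eqref{eq:surface-gn} for $m\ge3$; the case $m=2$ was already done above.

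\textbf{Case $m=1$.} Write $\int|\phi|^6\le\sup|\phi|^4\int\phi^2$. Using $\sup\phi^2\le\int|D(\phi^2)|\le2\|\phi\|_2\|D_\Sigma\phi\|_2$, this yields $\int|\phi|^6\le C(\int\phi^2)^2E$, again as required.

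The only delicate point is the precise form of Michael--Simon used (the constant depending only on $m$, with $|\mathcal H|$ on the right). This is taken from the literature, and everything else is elementary exponent bookkeeping.
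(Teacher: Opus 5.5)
Your argument for $m\ge2$ is correct and is the paper's argument. For $m=2$ you apply the $W^{1,1}$ Michael--Simon inequality to $\phi^2$. For $m\ge3$ you derive the $W^{1,2}$ form, and your choice $\psi=|\phi|^{2(m-1)/(m-2)}$ checks out; you then interpolate with $L^2$ via H\"older. One aside: you say $2+4/m\le 2m/(m-1)$ holds exactly when $m\ge2$. This is backwards, since it holds only for $m\le2$. You abandon that route, so it does no harm.

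The gap is in the case $m=1$. The inequality $\sup_\Sigma\phi^2\le\int_\Sigma|D_\Sigma(\phi^2)|$ holds only on components where $\phi$ vanishes somewhere. Noncompact components are fine, because compact support forces $\phi$ to vanish near both ends. But a one-dimensional $\Sigma\subseteq\RR^2$ may have closed components, and there a smooth $\phi$ need not vanish at all. Take a circle of radius $\rho$ with $\phi\equiv1$: then $\int|D_\Sigma(\phi^2)|=0<1=\sup\phi^2$. The estimate your argument actually produces, $\int|\phi|^6\le C(\int\phi^2)^2\int|D_\Sigma\phi|^2$, is also false there, since the left side is $2\pi\rho$ and the right side is $0$. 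So you cannot discard the curvature term: it is exactly what controls closed components. These components genuinely occur in the application with $n=2$, for example level curves around a strict local extremum of $u$.

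The missing ingredient is the fact that a closed curve has total absolute curvature $\int|\mathcal H|\ge2\pi$. On a closed component this gives $2\pi\min\phi^2\le\int|\mathcal H|\phi^2$. Combined with $\sup\phi^2\le\min\phi^2+\int|D_\Sigma(\phi^2)|$ and Cauchy--Schwarz, you get $\sup\phi^2\le\int|D_\Sigma(\phi^2)|+\frac1{2\pi}\int|\mathcal H|\phi^2\le C\|\phi\|_{L^2}E^{1/2}$. Then use $\int|\phi|^6\le\sup\phi^4\int\phi^2$ on each component and sum over the components. This is how the paper treats $m=1$. Your remark that the $m=1$ Michael--Simon inequality ``can be checked directly'' becomes correct only once you keep the $|\mathcal H|$ term and supply this closed-curve argument.
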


\begin{proof}
For $m\ge2$, \eqref{eq:surface-gn} follows easily from the Michael--Simon Sobolev inequality \cite[Thm.\,2.1]{MichaelSimon1973}. Specifically, for $m\geq 3$, one uses the $W^{1,2}$-form and interpolates it with $L^2$. For $m=2$, one applies the $W^{1,1}$-form to $\phi^2$.

Next we suppose $m=1$. We first claim that, on each connected component of $\Sigma$,
\begin{equation}\label{eq:curve-sup}
 \|\phi\|_{L^\infty(\Sigma)}^2
 \le C\|\phi\|_{L^2(\Sigma)}
 \left(\int_\Sigma\bigl(|D_\Sigma\phi|^2+\mathcal H^2\phi^2\bigr)\right)^{1/2}.
\end{equation}
If the component is nonclosed, then $\phi$ vanishes outside a compact subset of it. Hence, for every $x$ on the component, it holds that
\[
 |\phi(x)|^2\le\int_\Sigma|D_\Sigma(\phi^2)|
 \le2\|\phi\|_{L^2(\Sigma)}\|D_\Sigma\phi\|_{L^2(\Sigma)}.
\]
If the component is closed, set $\psi=\phi^2$ and $m_\psi=\min_\Sigma\psi$. Then $\psi(x)\le m_\psi+\int_\Sigma|D_\Sigma\psi|$ for every $x\in\Sigma$. Since the total absolute curvature of a closed curve is at least $2\pi$,
\[
 2\pi m_\psi\le m_\psi\int_\Sigma|\mathcal H|
 \le\int_\Sigma|\mathcal H|\phi^2.
\]
Consequently, we get
\[
 \begin{aligned}
 \|\phi\|_{L^\infty(\Sigma)}^2
 &\le\int_\Sigma|D_\Sigma(\phi^2)|
   +\frac1{2\pi}\int_\Sigma|\mathcal H|\phi^2\\
 &\le C\|\phi\|_{L^2(\Sigma)}
 \left(\int_\Sigma\bigl(|D_\Sigma\phi|^2+\mathcal H^2\phi^2\bigr)\right)^{1/2}.
 \end{aligned}
\]
Thus \eqref{eq:curve-sup} holds in either case. On each component,
$\int_\Sigma|\phi|^6\le \|\phi\|_{L^\infty(\Sigma)}^4\int_\Sigma\phi^2$, which gives \eqref{eq:surface-gn} for $m=1$. Summing over the components proves the general case.
\end{proof}

The following lemma provides a useful interior estimate for $\Delta u$ and $Du$ in terms of the $L^1$ norms of $u$ and $(\Delta u)^+$. Although this estimate may be standard, we have not found a convenient reference in the literature, so we include a proof for completeness.

\begin{lemma}\label{lem:poisson-mass}
Let $n\ge2$, $0<r<R$, and $1\le s<n/(n-1)$. Every $u\in W^{2,1}(B_R)$ satisfies
\begin{equation}\label{eq:poisson-mass}
 \|\Delta u\|_{L^1(B_r)}+\|Du\|_{L^s(B_r)}
 \le C(n,s,r,R)\left(\|u\|_{L^1(B_R)}
       +\|(\Delta u)^+\|_{L^1(B_R)}\right).
\end{equation}
\end{lemma}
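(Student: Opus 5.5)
Set $\mu=-\Delta u\in L^1(B_R)$, so that $(\Delta u)^+=\mu^-$. The plan has two steps: first bound the total mass of $\mu$ on an intermediate ball by testing against a cutoff; then recover $Du$ from the Green representation, using that the gradient of the Newtonian kernel lies in weak-$L^{n/(n-1)}$. By a covering and rescaling argument it suffices to treat fixed radii, say $r$ and $R$ with an intermediate $\rho=(r+R)/2$.

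\emph{Step 1: the mass of $\Delta u$.} Fix $\eta\in C_c^\infty(B_R)$ with $0\le\eta\le1$, $\eta=1$ on $B_\rho$, and $|D^2\eta|\le C$. Integrating by parts twice (justified since $u\in W^{2,1}$ and $\eta$ has compact support) gives
\[
\int \eta\,\Delta u=\int u\,\Delta\eta ,\qquad\text{so}\qquad \Bigl|\int\eta\,\Delta u\Bigr|\le C\|u\|_{L^1(B_R)}.
\]
Since $|\Delta u|=2(\Delta u)^+-\Delta u$, we obtain
\[
\int\eta|\Delta u|=2\int\eta(\Delta u)^+-\int\eta\,\Delta u\le 2\|(\Delta u)^+\|_{L^1(B_R)}+C\|u\|_{L^1(B_R)}.
\]
Hence $\|\Delta u\|_{L^1(B_\rho)}$ is bounded by the right-hand side of \eqref{eq:poisson-mass}.

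\emph{Step 2: the gradient.} Take a cutoff $\zeta$ equal to $1$ on $B_{r'}$ with $r<r'<\rho$ and supported in $B_\rho$, and write $\zeta u=\Gamma*(-\Delta(\zeta u))$, where $\Gamma$ is the fundamental solution. Expanding,
\[
-\Delta(\zeta u)=\zeta\mu-2D\zeta\cdot Du-u\,\Delta\zeta .
\]
The term $\Gamma*(\zeta\mu)$ contributes $D\Gamma*(\zeta\mu)$. Since $|D\Gamma|\lesssim|x|^{1-n}\in L^s_{\loc}$ for $s<n/(n-1)$, Young's inequality on bounded sets gives $\|D\Gamma*(\zeta\mu)\|_{L^s(B_r)}\le C\|\mu\|_{L^1(B_\rho)}$, which Step 1 controls. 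The term $u\,\Delta\zeta$ is treated in the same way using $\|u\|_{L^1}$.

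\emph{Main obstacle: the term $D\zeta\cdot Du$.} This term involves $Du$, which is not yet controlled. To avoid it, rewrite $2D\zeta\cdot Du=2\Div(uD\zeta)-2u\Delta\zeta$. The convolution of $\Gamma$ with $\Div(uD\zeta)$ becomes $D\Gamma*(uD\zeta)$. Its gradient involves $D^2\Gamma$, but $D\zeta$ is supported in the annulus $B_\rho\setminus B_{r'}$, which is at positive distance from $B_r$. There the kernel is smooth, with all derivatives bounded, so the contribution is bounded pointwise on $B_r$ by $C\|u\|_{L^1}$. Combining the three pieces yields $\|Du\|_{L^s(B_r)}\le C(\|u\|_{L^1(B_R)}+\|(\Delta u)^+\|_{L^1(B_R)})$.

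Adding the bounds from Step 1 and Step 2 proves \eqref{eq:poisson-mass}.
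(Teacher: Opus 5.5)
Your proof is correct and follows essentially the paper's route: Step~1 is the same cutoff argument based on $|\Delta u|=2(\Delta u)^+-\Delta u$, and Step~2 likewise applies Young's inequality with $D\Gamma\in L^s_{\loc}$ to the Newtonian potential of the localized Laplacian. The only difference is that the paper writes $u=\Gamma*(\zeta\Delta u)+h$ with $h$ harmonic and cites the standard interior estimate $\|Dh\|_{L^s(B_r)}\le C\|h\|_{L^1(B_{r_1})}$, whereas you localize $u$ itself and bound the commutator terms through the smoothness of the kernel away from the diagonal, which amounts to proving that harmonic estimate by hand (and your preliminary covering/rescaling reduction is not needed).
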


\begin{proof}
Choose $r<r_1<r_2<R$ and a cutoff function $\chi\in C_c^\infty(B_R)$ equal to one on $B_{r_2}$. Since $|\Delta u|=-\Delta u+2(\Delta u)^+$, integration by parts gives
\[
 \|\Delta u\|_{L^1(B_{r_2})}
 \le \|\Delta\chi\|_{L^\infty(B_R)}\|u\|_{L^1(B_R)}
      +2\|(\Delta u)^+\|_{L^1(B_R)}.
\]
Take $\zeta\in C_c^\infty(B_{r_2})$ equal to one on $B_{r_1}$, let $\Gamma$ be the fundamental solution of the Laplacian, and set $v=\Gamma*(\zeta\Delta u)$. Then $h=u-v$ is harmonic in $B_{r_1}$. Since $\Gamma\in L^1_{\rm loc}$ and $D\Gamma\in L^s_{\rm loc}$ for $s<n/(n-1)$, Young's inequality yields
\[
 \|v\|_{L^1(B_{r_1})}+\|Dv\|_{L^s(B_{r_1})}
 \le C(n,s,r_1,r_2)\|\Delta u\|_{L^1(B_{r_2})}.
\]
A standard interior estimate for the harmonic function $h$ gives
\[
 \|Dh\|_{L^s(B_r)}\le C(n,s,r,r_1)\|h\|_{L^1(B_{r_1})}
 \le C\bigl(\|u\|_{L^1(B_R)}+\|\Delta u\|_{L^1(B_{r_2})}\bigr).
\]
Combining the above three estimates proves \eqref{eq:poisson-mass}.
\end{proof}

Our next lemma establishes key gradient and Hessian estimates. Its proof combines the preceding three lemmas with a level-set argument.

\begin{lemma}\label{lem:endpoint}
Let $n\ge2$ and $0<r<R<1$. Every classical stable solution $u$ of \eqref{eq:pde} in $B_1$ satisfies, with $p_0=2n/(n-1)$,
\begin{equation}\label{eq:endpoint-estimate}
 \begin{aligned}
 \|Du\|_{L^{p_0}(B_r)}+\|D^2u\|_{L^1(B_r)}
 &\le C(n,r,R)\left(\inf_{c\in\RR}\|u-c\|_{L^1(B_R)}+\|f^-(u)\|_{L^1(B_R)}\right)\\
 &\le C(n,r,R)\left(\|Du\|_{L^1(B_R)}+\|f^-(u)\|_{L^1(B_R)}\right)\\
 &\le C(n,r,R)\left(\|Du\|_{L^2(B_R)}+\|f^-(u)\|_{L^1(B_R)}\right).
 \end{aligned}
\end{equation}
\end{lemma}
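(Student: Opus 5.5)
We first reduce to the first inequality in \eqref{eq:endpoint-estimate}. The second follows from the Poincar\'e inequality on $B_R$ with $c=\fint_{B_R}u$, and the third from H\"older's inequality. For the first, fix $c$ and replace $u$ by $u-c$ and $f$ by $f(\cdot+c)$. Stability, $f^-(u)$ and all derivatives of $u$ are unchanged, so we may assume $c=0$. We then write
\[
\mathcal D:=\|u\|_{L^1(B_R)}+\|f^-(u)\|_{L^1(B_R)}.
\]
Everything is homogeneous of degree one under $(u,f)\mapsto(\lambda u,\lambda f)$, so the target bound is consistent with scaling.

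\emph{Step 1 (mass bounds).} Apply Lemma~\ref{lem:poisson-mass} with $\Delta u=-f(u)$, so that $(\Delta u)^+=f^-(u)$, on an intermediate pair of radii $r<r_1<R$. This gives
\[
\|f(u)\|_{L^1(B_{r_1})}+\|Du\|_{L^1(B_{r_1})}\le C\mathcal D.
\]

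\emph{Step 2 (level-set Gagliardo--Nirenberg).} Set $w=|Du|$ and $m=n-1$. By Sard's theorem, almost every level set $\Sigma_t=\{u=t\}$ is a smooth hypersurface where $w>0$. On such a level set we use the standard decomposition $\mathcal A_u^2=|D_{\Sigma}w|^2+w^2|\mathrm{II}|^2$ together with $\mathcal H^2\le(n-1)|\mathrm{II}|^2$. Applying Lemma~\ref{lem:hypersurface-sobolev} on $\Sigma_t$ with $\phi=w^{1/2}\eta$, where $\eta\in C_c^\infty(B_{r_1})$, we get
\[
|D_\Sigma\phi|^2+\mathcal H^2\phi^2\le C\Bigl(\frac{\mathcal A_u^2}{w}\eta^2+w|D\eta|^2\Bigr).
\]
Hence
\[
\int_{\Sigma_t}w^{1+2/m}\eta^{2+4/m}\le C\Bigl(\int_{\Sigma_t}w\eta^2\Bigr)^{2/m}\int_{\Sigma_t}\Bigl(\frac{\mathcal A_u^2}{w}\eta^2+w|D\eta|^2\Bigr).
\]
The middle factor is a flux, and we bound it uniformly in $t$ by the divergence theorem on $\{u>t\}$:
\[
\int_{\Sigma_t}w\eta^2=\int_{\{u>t\}}\bigl(f(u)\eta^2-Du\cdot D(\eta^2)\bigr)\le C\mathcal D.
\]
This uses Step~1. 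Integrating in $t$ with the coarea formula, noting $2+2/m=p_0$, and then invoking \eqref{eq:geometric} gives
\[
\int w^{p_0}\eta^{2+4/m}\le C\mathcal D^{2/m}\Bigl(\int\mathcal A_u^2\eta^2+\int w^2|D\eta|^2\Bigr)\le C\mathcal D^{2/m}\int w^2|D\eta|^2 .
\]

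\emph{Step 3 (absorption).} Since $1<2<p_0$, interpolate $\|w\|_{L^2}$ between $\|w\|_{L^1}\le C\mathcal D$ and $\|w\|_{L^{p_0}}$, with cutoffs $\eta$ equal to $1$ on $B_\rho$ and supported in $B_{\rho'}$ for $r\le\rho<\rho'\le r_1$. Young's inequality, which is legitimate by the homogeneity, then gives
\[
\|w\|_{L^{p_0}(B_\rho)}\le\tfrac12\|w\|_{L^{p_0}(B_{\rho'})}+C(\rho'-\rho)^{-k}\mathcal D .
\]
Simon's absorbing lemma (standard iteration over radii) then yields $\|Du\|_{L^{p_0}(B_{r_2})}\le C\mathcal D$ for some $r<r_2<r_1$; since $\mathcal D$ does not depend on the radius, the iteration closes.

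\emph{Step 4 (Hessian).} By \eqref{eq:matrix}, $|D^2u|\le|f(u)|+\sqrt n\,\mathcal A_u$. Using \eqref{eq:geometric} with a cutoff supported in $B_{r_2}$, we obtain
\[
\|D^2u\|_{L^1(B_r)}\le\|f(u)\|_{L^1(B_r)}+C\|\mathcal A_u\eta\|_{L^2}\le C\mathcal D+C\|Du\|_{L^2(B_{r_2})}\le C\mathcal D .
\]

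The main obstacle is Step~2. It requires the pointwise comparison $|D_\Sigma(w^{1/2})|^2+\mathcal H^2w\lesssim\mathcal A_u^2/w$ on regular level sets, a correct justification of the flux bound given that the sign of $f$ is unknown (this is where the $L^1$ control of $f(u)$ from Step~1 is essential), and care with the null set of critical values and with the degenerate case $m=1$, which Lemma~\ref{lem:hypersurface-sobolev} already covers.
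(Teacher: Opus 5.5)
Your argument is correct, and it is organized differently from the paper's.

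\textbf{The paper's route.} It first proves the energy form of the estimate. It bounds $\sup_t\int_{\Sigma_t}\eta^2w$ by the positive variation of the push-forward $u_\#[\operatorname{div}(\eta^2Du)\,dx]$. This uses only the one-sided bound $\operatorname{div}(\eta^2Du)\le\eta^2f^-(u)+2\eta w|D\eta|$. It thereby obtains \eqref{add2} and \eqref{eq:energy-source} in terms of $\|Du\|_{L^2(B_R)}+\|f^-(u)\|_{L^1(B_R)}$, without using Lemma~\ref{lem:poisson-mass}. Only afterwards does it bring in that lemma (with $1<s<n/(n-1)$), interpolate to get $\|Du\|_{L^2(B_r)}\le\varepsilon\|Du\|_{L^2(B_R)}+C D_c$, and close with Simon's lemma on all subballs of $B_R$ plus covering.

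\textbf{Your route.} You apply Lemma~\ref{lem:poisson-mass} with $s=1$ at the start, which controls $f(u)$ in $L^1$ with both signs. The flux is then bounded directly by $\mathcal D$, up to a harmless power of $(\rho'-\rho)^{-1}$ coming from $D\eta$. The level-set inequality becomes $\|w\|^{p_0}_{L^{p_0}(B_\rho)}\lesssim(\rho'-\rho)^{-k}\mathcal D^{2/m}\|w\|^2_{L^2(B_{\rho'})}$, whose only non-data factor is linear in the energy. The interpolation $\|w\|_{L^2}^2\le\|w\|_{L^1}^{2/(n+1)}\|w\|_{L^{p_0}}^{2n/(n+1)}$, with $2n/(n+1)<p_0$, lets the elementary concentric-radii iteration lemma close the estimate. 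The Hessian bound is then immediate from $|D^2u|\le|f(u)|+\sqrt n\,\mathcal A_u$.

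\textbf{Comparison.} Your ordering is shorter and needs no scaled Simon lemma. The paper's ordering shows that the $L^{p_0}$ and Hessian bounds in energy form need only $f^-(u)$ and stability. It then isolates the passage to $L^1$ data as a separate, purely linear step.

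\textbf{A repair needed in Step~2.} Sard's theorem for real-valued functions on $\RR^n$ needs roughly $C^n$ regularity. A classical solution with $f$ merely locally Lipschitz is in general only $W^{3,p}_{\loc}$, so you cannot assume that almost every level set avoids critical points in general dimensions. Near critical points, $\phi=w^{1/2}\eta$ is also neither $C^1$ nor compactly supported in the regular part $\{u=t,\ w>0\}$, which is where Lemma~\ref{lem:hypersurface-sobolev} is applied. The fix is the paper's: apply the lemma to $\eta\sqrt w\,\vartheta(w/\varepsilon)$, with $\vartheta=0$ on $[0,1]$ and $\vartheta=1$ on $[2,\infty)$. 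Then let $\varepsilon\downarrow0$ using Fatou's lemma before integrating in $t$.

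Similarly, justify your flux identity for a.e.\ $t$ through the coarea formula, that is, the distributional computation of the derivative of $t\mapsto\int_{\Sigma_t}\eta^2w$. A literal divergence theorem on $\{u>t\}$ is not available at levels containing critical points. With these two adjustments, your proof goes through.
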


\begin{proof}
Choose a cutoff function $\eta\in C_c^\infty(B_{(r+R)/2})$, equal to one on $B_r$, with $0\le\eta\le1$. Put $w=|Du|$. For every $t\in\RR$, on the regular part $\Sigma_t=\{x\in B_R:\;u(x)=t,\ w(x)>0\}$ of the level set, we set
\[
 F_\eta(t)=\int_{\Sigma_t}\eta^2w,\qquad
 B_\eta(t)=\int_{\Sigma_t}
 \left(\frac{\eta^2\mathcal A_u^2}{w}+w|D\eta|^2\right).
\]
The coarea formula and \eqref{eq:geometric} give
\begin{align}\label{add1}
    \int_\RR F_\eta(t)\,dt\le\int_{B_R}w^2,
 \qquad \int_\RR B_\eta(t)\,dt\le C(n,r,R)\int_{B_R}w^2.
\end{align}
For $\psi\in C_c^1(\RR)$, the coarea formula gives $\int_\RR F_\eta(t)\psi(t)\,dt=\int_{B_R}\eta^2w^2\psi(u)$. Hence
\[
 \begin{aligned}
 -\int_\RR F_\eta(t)\psi'(t)\,dt
 &=-\int_{B_R}\eta^2Du\cdot D(\psi(u))
 =\int_{B_R}\operatorname{div}(\eta^2Du)\,\psi(u)\\
 &=\int_{B_R}\bigl(-\eta^2f(u)+2\eta Du\cdot D\eta\bigr)\psi(u).
 \end{aligned}
\]
Thus, in the sense of distributions on $\RR$, we have $DF_\eta=u_\#[\operatorname{div}(\eta^2Du)\,dx]$, where $u_\#$ denotes the push-forward by $u$. Since $\operatorname{div}(\eta^2Du)\le\eta^2f^-(u)+2\eta w|D\eta|$, the positive variation satisfies $(DF_\eta)^+(\RR)\le\int_{B_R}\eta^2f^-(u)+2\int_{B_R}\eta w|D\eta|$. As $F_\eta\ge0$ and has compact support, we have $\|F_\eta\|_{L^\infty(\RR)}\le(DF_\eta)^+(\RR)$. Consequently,
\begin{equation}\label{eq:level-flux}
 \|F_\eta\|_{L^\infty(\RR)}
 \le C(n,r,R)\left(\|Du\|_{L^2(B_R)}+\|f^-(u)\|_{L^1(B_R)}\right).
\end{equation}

As in the proof of \eqref{eq:matrix} and \eqref{eq:stress}, one can compute that
$|D_{\Sigma_t}w|^2=|D^2u\,e|^2-|D^2u[e,e]|^2\le\mathcal A_u^2$ and
$w^2\mathcal H^2=(\Delta u-D^2u[e,e])^2\le(n-1)\mathcal A_u^2$, where $e=Du/w$. Choose a nondecreasing function $\vartheta\in C^\infty([0,\infty))$ such that $0\le\vartheta\le1$, $\vartheta=0$ on $[0,1]$, $\vartheta=1$ on $[2,\infty)$, and $|\vartheta'|\le C$, and put $\phi_\varepsilon=\eta\sqrt w\,\vartheta(w/\varepsilon)$. Direct computation yields
\[
 \begin{aligned}
 |D_{\Sigma_t}\phi_\varepsilon|^2
 &\le Cw|D\eta|^2
 +C\eta^2\left(\frac{\vartheta(w/\varepsilon)}{\sqrt w}
 +\frac{\sqrt w}{\varepsilon}|\vartheta'(w/\varepsilon)|\right)^2
 |D_{\Sigma_t}w|^2\\
 &\le Cw|D\eta|^2+C\frac{\eta^2\mathcal A_u^2}{w}.
 \end{aligned}
\]
As $\phi_\varepsilon$ is compactly supported in $\Sigma_t$ and the curvature term satisfies $\mathcal H^2\phi_\varepsilon^2\le(n-1)\eta^2\mathcal A_u^2/w$, we can apply Lemma~\ref{lem:hypersurface-sobolev} to deduce
\[
 \int_{\Sigma_t}\eta^{2+4/m}w^{1+2/m}\vartheta(w/\varepsilon)^{2+4/m}
 \le C(m)F_\eta(t)^{2/m}B_\eta(t),\qquad m=n-1.
\]
Letting $\varepsilon\downarrow0$, applying Fatou's lemma, and then integrating in $t$ using the coarea formula, we obtain
\[
 \int_{B_r}w^{2+2/m}
 \le C(n)\|F_\eta\|_{L^\infty(\RR)}^{2/m}
       \int_\RR B_\eta(t)\,dt.
\]
From \eqref{add1} and \eqref{eq:level-flux} we further deduce
\[
 \int_{B_r}w^{2+2/(n-1)}
 \le C(n,r,R)(\|Du\|_{L^2(B_R)}+\|f^-(u)\|_{L^1(B_R)})^{2/(n-1)}\|Du\|_{L^2(B_R)}^2.
\]
By Young's inequality we obtain
\begin{align}\label{add2}
    \|Du\|_{L^{p_0}(B_r)}\leq C\|Du\|_{L^2(B_R)}+C\|f^-(u)\|_{L^1(B_R)}.
\end{align}

For the Hessian estimate, take $\chi\in C_c^\infty(B_R)$ nonnegative and equal to one on $B_r$. Thanks to the identity
\begin{equation}\label{eq:laplace-sign}
 |\Delta u|=-\Delta u+2f^-(u),
\end{equation}
we have $\int_{B_R}\chi|\Delta u|=\int_{B_R}Du\cdot D\chi+2\int_{B_R}\chi f^-(u)$. Combining this with \eqref{eq:matrix} and \eqref{eq:geometric} yields
\begin{equation}\label{eq:energy-source}
 \|D^2u\|_{L^1(B_r)}
 \le C(n,r,R)\left(\|Du\|_{L^2(B_R)}+\|f^-(u)\|_{L^1(B_R)}\right).
\end{equation}

It remains to replace the energy on the right of \eqref{add2} and \eqref{eq:energy-source} by $\inf_{c\in\RR}\|u-c\|_{L^1(B_R)}$. Fix $c\in\RR$ and set $D_c=\|u-c\|_{L^1(B_R)}+\|f^-(u)\|_{L^1(B_R)}$. Choose $1<s<n/(n-1)$. By applying Lemma~\ref{lem:poisson-mass} to $u-c$ and interpolating $L^2$ between $L^s$ and $L^{p_0}$, we can deduce from \eqref{add2} that for any $\varepsilon>0$,
\begin{align}\label{add3}
     \|Du\|_{L^{2}(B_r)}\leq \varepsilon\|Du\|_{L^{2}(B_R)}+C(n,r,R,\varepsilon)D_c.
\end{align}
For every ball $B\subseteq B_R$, set $\sigma(B)=\int_B|Du|^2$. Combining \eqref{add3} and a standard scaling argument, we get that, for every ball $B_\rho(y)\subseteq B_R$,
\begin{equation}\label{eq:simon-pattern}
 \rho^{n+2}\sigma(B_{\rho/4}(y))
 \le \varepsilon\rho^{n+2}\sigma(B_\rho(y))
 +C(n,\varepsilon)\left(\|u-c\|_{L^1(B_\rho(y))}
       +\rho^2\|f^-(u)\|_{L^1(B_\rho(y))}\right)^2.
\end{equation}
Since $R<1$, the last term is bounded by $C(n,\varepsilon) D_c^2$. Taking $\varepsilon$ sufficiently small, we can apply Simon's lemma \cite{Simon1997} (see also \cite[Lem.\,A.4]{CabreFigalliRosOtonSerra2020}) to deduce that $\|Du\|_{L^{2}(B_{R/2})}\leq C(n,R)D_c$. Combining this with \eqref{add2}, \eqref{eq:energy-source}, and a further scaling and covering argument concludes the proof.
\end{proof}

The following result shows that the $L^2$ norm of $Du$ over an inner ball can be controlled by integrals over an outer annulus.

\begin{proposition}\label{prop:annulus}
Let $n\ge2$ and $0<r<1$. Every classical stable solution $u$ of \eqref{eq:pde} in $B_1$ satisfies
\begin{equation}\label{eq:annulus-energy}
 \int_{B_r}|Du|^2\le C(n,r)\int_{B_1\setminus B_r}|Du|^2.
\end{equation}
Moreover, for any $0<r,s<1$ and any $c\in\RR$,
\begin{equation}\label{eq:annulus-data}
  \|Du\|_{L^2(B_r)}
  \le C(n,r,s)\left(\|u-c\|_{L^1(B_1\setminus B_s)}
  +\|f^-(u)\|_{L^1(B_1\setminus B_s)}\right).
\end{equation}
\end{proposition}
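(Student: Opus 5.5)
The plan is to prove \eqref{eq:annulus-energy} from the dilation identity \eqref{eq:dilation}, and then to deduce \eqref{eq:annulus-data} by combining it with Lemma~\ref{lem:endpoint} applied on small balls covering an annulus.

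\emph{Step 1: \eqref{eq:annulus-energy} for $n\ge3$.} Fix a radial Lipschitz cutoff $\eta$ with $\eta=1$ on $B_r$, $\supp\eta\subseteq B_{(1+r)/2}$ and $|D\eta|\le C(r)$. Since $u\in C^2$, the function $z\eta=(x\cdot Du)\eta$ is Lipschitz with compact support, so by density it is admissible in \eqref{eq:stability}. Then \eqref{eq:dilation} gives
\[
 (n-2)\int w^2\eta^2\le\int z^2|D\eta|^2-\int w^2\,x\cdot D(\eta^2)+2\int z\,p\cdot D(\eta^2).
\]
Every term on the right is supported where $D\eta\ne0$, which lies inside $B_1\setminus B_r$. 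There $|z|\le w$ because $|x|\le1$, so the right-hand side is at most $C(r)\int_{B_1\setminus B_r}w^2$. Since $n-2>0$ and $\eta=1$ on $B_r$, this yields \eqref{eq:annulus-energy}.

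\emph{Step 2: \eqref{eq:annulus-energy} for $n=2$, the main obstacle.} Here the interior coefficient $(n-2)$ vanishes, because the Dirichlet energy is scale invariant in two dimensions. The naive lift $\tilde u(x,x_3)=u(x)$ with a cutoff $\psi(x_3)$ in the extra variable does not help: it produces a term $\int w^2\eta^2$ that cancels exactly against the gain. What I would try first is to apply the $n=3$ version of the argument to the lift $\tilde u$, which is a stable solution in $B_1\times\RR$, but with centers and radii chosen so that the three-dimensional annulus projects into $B_1\setminus B_r$. For instance, I would use balls centered at points $(y,0)$ with $|y|$ close to $1$ and average over such centers, in the spirit of the rotational averaging described in the strategy. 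If the resulting geometry only controls the energy near the annulus, I would then propagate inward via \eqref{eq:annulus-energy} on smaller three-dimensional balls. I expect this reduction to be the delicate point of the proposition.

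\emph{Step 3: \eqref{eq:annulus-data}.} Given $0<r,s<1$, choose radii $\max\{r,s\}<r_1<r_2<1$. First I apply \eqref{eq:annulus-energy}, rescaled to the ball $B_{r_2}$, to get
\[
 \int_{B_{r_1}}|Du|^2\le C\int_{B_{r_2}\setminus B_{r_1}}|Du|^2 .
\]
The left-hand side dominates $\int_{B_r}|Du|^2$. Next I cover the compact annulus $\ol{B_{r_2}}\setminus B_{r_1}$ by finitely many balls $B_\rho(y)$, with $\rho=\rho(r_1,r_2,s)$ chosen so that each doubled ball $B_{2\rho}(y)$ lies in $B_1\setminus\ol{B_s}$. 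On each ball I apply the rescaled first inequality of \eqref{eq:endpoint-estimate} with $c$ fixed. This bounds $\|Du\|_{L^{p_0}(B_\rho(y))}$, and hence $\|Du\|_{L^2(B_\rho(y))}$ since $p_0>2$, by $C\bigl(\|u-c\|_{L^1(B_{2\rho}(y))}+\|f^-(u)\|_{L^1(B_{2\rho}(y))}\bigr)$. Summing over the finitely many balls and combining with the first display gives \eqref{eq:annulus-data}, with a constant depending only on $n,r,s$.
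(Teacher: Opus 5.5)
\textbf{There is a genuine gap at $n=2$.} For $n\ge3$ your Step~1 is correct, and it takes a different and shorter route than the paper. Plugging $\xi=z\eta$ into \eqref{eq:stability} via \eqref{eq:dilation} gives the interior coefficient $n-2$, with every error term supported on $\supp D\eta\subseteq B_1\setminus B_r$. So you need neither $K$, nor Poincar\'e's inequality, nor \eqref{eq:geometric}. Your Step~3 is exactly the paper's scaling-and-covering deduction of \eqref{eq:annulus-data}.

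\textbf{Why Step~2 fails.} The statement includes $n=2$, Step~2 is not a proof, and the lifting it proposes cannot work.
\begin{enumerate}
\item Let $\tilde u(x,x_3)=u(x)$ and apply \eqref{eq:dilation} in $\RR^3$ about any center $(y,t)$. Then $z=(x-y)\cdot Du$ is just the planar dilation field. Integrating in $x_3$ turns the three-dimensional inequality into the planar one, with $\eta^2$ replaced by $\Phi(x)=\int\eta(x,x_3)^2\,dx_3$. The interior coefficient is $0$ for every choice of cutoff, so the cancellation you noticed is not an artifact of product cutoffs.
\item Using the $n=3$ version of \eqref{eq:annulus-energy} as a black box on balls $B^3_\rho((y,t))$ does not help either. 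The annulus $B^3_\rho\setminus B^3_{\rho'}$ projects onto the whole disk $B_\rho(y)$. Its right-hand side therefore always charges the inner disk $B_{\rho'}(y)$ too, with weight $2(\rho-\rho')$ at the center, and propagating inward with smaller balls has the same defect.
\item What lifting does give is decay $\int_{B_\rho}|Du|^2\le C(\rho/R)\int_{B_R}|Du|^2$. This yields \eqref{eq:annulus-energy} only for small $r$. Small-ball/large-ball inequalities of this type are also satisfied by the density $\mathbf 1_{B_r}\,dx$, so by themselves they cannot bound $B_r$ by $B_1\setminus B_r$ for $r$ near $1$.
\end{enumerate}

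\textbf{The missing idea.} You need a Pohozaev-type identity whose interior coefficient is $n-1$ rather than $n-2$. The paper tests $\Div(p\otimes p-w^2I+KI)=0$ from \eqref{eq:stress} with $x\chi$, where $r<s_1<s_2<1$ and $\chi\in C_c^\infty(B_{s_1})$ equals one on $B_r$. This gives
\[
 (n-1)\int\chi w^2=\int\bigl((x\cdot p)(p\cdot D\chi)-w^2\,x\cdot D\chi\bigr)+\int(K-K_{B_{s_1}})\operatorname{div}(x\chi).
\]
Set $T=\int_{B_1\setminus B_r}w^2$ and $S=\int_{B_r}w^2$.
\begin{itemize}
\item The first integral is at most $C\,T$, since $D\chi$ is supported outside $B_r$.
\item Stability enters only through the $K$-term, via \eqref{eq:geometric} rather than $Q_u(z\eta)\ge0$. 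Poincar\'e's inequality and $|DK|\le\sqrt{n-1}\,w\mathcal A_u$ give $\|K-K_{B_{s_1}}\|_{L^1(B_{s_1})}\le C\|w\|_{L^2(B_{s_1})}\|\mathcal A_u\|_{L^2(B_{s_1})}$. Then \eqref{eq:geometric}, with a cutoff equal to one on $B_{s_1}$ and supported in $B_{s_2}$, gives $\int_{B_{s_1}}\mathcal A_u^2\le C\,T$.
\end{itemize}
Hence $(n-1)S\le CT+C(S+T)^{1/2}T^{1/2}$, and Young's inequality closes the argument for every $n\ge2$, including $n=2$.
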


\begin{proof}
Choose $r<s_1<s_2<1$ and let $\chi\in C_c^\infty(B_{s_1})$ be equal to one on $B_r$. Let $p,w,K$ be as in Lemma \ref{lem:identities}, and set $S=\int_{B_r}w^2$, $T=\int_{B_1\setminus B_r}w^2$. By \eqref{eq:geometric}, we have $\int_{B_{s_1}}\mathcal A_u^2\le C(n,r)T$. Let $\mathbb T=p\otimes p-w^2I+KI$. Then $\Div\mathbb T=0$ by \eqref{eq:stress}. Testing this identity with $x\chi$ gives $0=-\int_{B_{s_1}}\mathbb T:D(x\chi)$. Direct expansion yields
\[
 \begin{aligned}
 (n-1)\int_{B_{s_1}}\chi w^2
 ={}&\int_{B_{s_1}}\bigl((x\cdot p)(p\cdot D\chi)-w^2x\cdot D\chi\bigr)\\
 &+\int_{B_{s_1}} (K-K_{B_{s_1}})\operatorname{div}(x\chi),
 \end{aligned}
\]
where $K_{B_{s_1}}$ denotes the integral mean of $K$ over $B_{s_1}$.

The first integral on the right is bounded by $C(n,r)T$ because $D\chi$ is supported outside $B_r$. For the second integral, by Poincar\'e's inequality, \eqref{eq:stress}, H\"older's inequality, and the estimate of $\mathcal A_u$ above, we get
\[
 \|K-K_{B_{s_1}}\|_{L^1(B_{s_1})}
 \le C(n,r)\|DK\|_{L^1(B_{s_1})}
 \le C(n,r)(S+T)^{1/2}T^{1/2}.
\]
It follows that $(n-1)S\le C(n,r)T+C(n,r)(S+T)^{1/2}T^{1/2}$. From Young's inequality we obtain \eqref{eq:annulus-energy}. The estimate \eqref{eq:annulus-data} follows from \eqref{eq:annulus-energy} and \eqref{eq:endpoint-estimate}, together with a standard scaling and covering argument.
\end{proof}

The following interpolation result will be used in the proof of Theorem \ref{thm:gradient} to deduce gradient estimates.

\begin{lemma}\label{lem:morrey-bv}
Let $V\in W^{1,1}(\RR^n;\RR^m)$ have compact support, let $p>0$, and let $1\le\beta<n$. Then
\begin{equation}\label{eq:morrey-bv}
 \int_{\RR^n}|V|^{1+p/\beta}
 \le C(n)\|V\|_{M^{p,\beta}(\RR^n)}^{p/\beta}
                      \int_{\RR^n}|DV|.
\end{equation}
\end{lemma}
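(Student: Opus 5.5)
The plan is a layer-cake argument that combines the Morrey bound on small balls with the $W^{1,1}$ (BV) control through a covering argument. By homogeneity we may normalize $\|V\|_{M^{p,\beta}(\RR^n)}=1$ and write $|V|$ for the scalar function; since $|D|V||\le|DV|$ a.e., it suffices to treat a nonnegative scalar $v=|V|\in W^{1,1}$ with compact support. The target is
\[
\int v^{1+p/\beta}=\Bigl(1+\tfrac p\beta\Bigr)\int_0^\infty t^{p/\beta}\,|\{v>t\}|\,dt,
\]
and we aim for the distributional bound $|\{v>t\}|\le C(n)\,t^{-p/\beta}\int_{\{v>t/2\}}|Dv|$ or a close variant. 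Integrating this in $t$, and using Fubini together with the coarea formula (or simply $\int_0^\infty \mathbf 1_{\{v>t/2\}}\,dt/t$-type manipulations), gives the claim. The exponent $p/\beta$ on the right should match, since $t^{p/\beta}\cdot t^{-p/\beta}$ leaves $\int_0^\infty \mathcal H^{n-1}(\{v=s\})\,ds\lesssim\int|Dv|$ after an appropriate rearrangement. It may be more convenient to work with truncations $v_t=\min\{(v-t)^+,t\}$, for which $\int_t^{2t}\mathcal H^{n-1}(\{v=s\})\,ds=\int_{\{t<v<2t\}}|Dv|$, so that these pieces sum over dyadic $t$ without overlap.

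For the key distributional bound, fix $t$ and set $E_t=\{v>t\}$, a set of finite measure. For each point $x$ of density one in $E_t$ we choose a radius by a continuity argument. As $r\to0$ the ratio $|E_t\cap B_r(x)|/|B_r|\to1$. On the other hand, the Morrey bound gives
\[
t^p\,|E_t\cap B_r(x)|\le\int_{B_r(x)}v^p\le r^{n-\beta},
\]
so the density ratio is at most $C t^{-p}r^{-\beta}$, which drops below $1/2$ once $r\ge r_t:=Ct^{-p/\beta}$. Hence for each such $x$ there is $r_x\le r_t$ at which $|E_t\cap B_{r_x}(x)|=\tfrac12|B_{r_x}|$. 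The relative isoperimetric inequality in the ball then yields
\[
|B_{r_x}|\le C\,r_x\,P(E_t;B_{r_x}(x)),
\]
so $|E_t\cap B_{r_x}(x)|\le C r_t\,P(E_t;B_{r_x}(x))$. Applying the Vitali covering lemma to the balls $B_{r_x}(x)$ and using disjointness of the selected balls gives $|E_t|\le C(n)\,t^{-p/\beta}\,P(E_t)$. Integrating,
\[
\int_0^\infty t^{p/\beta}|E_t|\,dt\le C\int_0^\infty P(\{v>t\})\,dt=C\int|Dv|
\]
by the coarea formula, which gives exactly the claimed inequality; the truncation device above is then not needed. (The restriction $\beta\ge1$ is presumably harmless here; the argument seems to require only $\beta<n$, so that $r^{n-\beta}$ decays correctly relative to $r^n$.)

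The main obstacle is making the density-crossing step rigorous. We need $r_x\le r_t$, which follows from the Morrey bound $|E_t\cap B_r|\le t^{-p}r^{n-\beta}$ together with $|B_r|\sim r^n$: the ratio tends to $0$ as $r\to\infty$ and to $1$ at density points, so by continuity in $r$ an intermediate radius exists. We also need the relative isoperimetric inequality for sets of finite perimeter, which requires working with almost every $t$, where $E_t$ has finite perimeter by the coarea formula. Constants depend only on $n$ once the factor $C(n)^{\,\cdot}$ in $r_t$ is absorbed. Taking $r_t$ without extra constants, the Morrey bound gives the ratio $\le |B_1|^{-1}t^{-p}r^{-\beta}$, so we may take $r_t=(2/|B_1|)^{1/\beta}t^{-p/\beta}$, and since $\beta\ge1$ the factor $(2/|B_1|)^{1/\beta}$ is bounded by a constant depending on $n$ alone.
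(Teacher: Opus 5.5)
Your covering step is sound: for a.e.\ $t>0$, you pick at each density point of $E_t$ a radius $r_x\le r_t$ with $|E_t\cap B_{r_x}(x)|=\tfrac12|B_{r_x}|$, then apply the relative isoperimetric inequality and Vitali. Under the normalization $\|V\|_{M^{p,\beta}(\RR^n)}=1$ this does give $|E_t|\le C(n)\,t^{-p/\beta}P(E_t)$. The gap is in the final line. Inserting this bound into your own layer-cake identity gives
\[
 \int_{\RR^n}v^{1+p/\beta}=\Bigl(1+\frac p\beta\Bigr)\int_0^\infty t^{p/\beta}|E_t|\,dt
 \le C(n)\Bigl(1+\frac p\beta\Bigr)\int_{\RR^n}|Dv|,
\]
so the factor $1+p/\beta$ is dropped when you say this ``gives exactly the claimed inequality''. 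Since $p>0$ is arbitrary, the resulting constant is not $C(n)$. This is not a bookkeeping slip: the loss is built into bounding $t^{p/\beta}|E_t|$. Take $v$ a steep plateau of height $h$ approximating $h\mathbf 1_{B_1}$, normalized. At every level $t<h$ your bound on $|E_t|$ exceeds the truth by a factor comparable to $(h/t)^{p/\beta}$, and integrating in $t$ reproduces exactly the extra $1+p/\beta$.

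The fix is to run the same covering for the measure $\lambda=v^{p/\beta}\,dx$ instead of Lebesgue measure. Then $\int v^{1+p/\beta}=\int_0^\infty\lambda(E_t)\,dt$ with no prefactor. The half-density radii exist and are bounded by $C(n)|E_t|^{1/n}$ simply because $|E_t|<\infty$. For the disjoint Vitali balls $B_i=B_{r_i}(x_i)$, H\"older gives $\lambda(5B_i)\le|5B_i|^{1-1/\beta}\bigl(\int_{5B_i}v^p\bigr)^{1/\beta}\le C(n)r_i^{n-1}\le C(n)P(E_t;B_i)$. Hence $\lambda(E_t)\le C(n)P(E_t)$, and the coarea formula finishes with a purely dimensional constant. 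This H\"older step is exactly where $\beta\ge1$ is used, so your remark that this hypothesis is harmless applies only to the lossy version.

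The repaired argument is precisely the proof of the Meyers--Ziemer trace inequality. The paper quotes that inequality directly, after the same H\"older estimate $\lambda(B_\rho)\le C(n)\|V\|_{M^{p,\beta}}^{p/\beta}\rho^{n-1}$, and applies it to $\varphi=|V|$. Your version as written still has some value: it avoids H\"older, so it holds for every $0<\beta<n$ with constant $C(n,\beta)(1+p/\beta)$. In the paper's applications ($p\le2$, $\beta=\beta_n\ge1$) the factor satisfies $1+p/\beta\le3$ and would be harmless. But it does not prove the lemma as stated.
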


\begin{proof}
By H\"older's inequality, the measure $d\lambda=|V|^{p/\beta}dx$ satisfies
\[
 \lambda(B_\rho(x))\le C(n)\|V\|_{M^{p,\beta}(\RR^n)}^{p/\beta}\rho^{n-1}.
\]
From \cite[Thm.\,4.7]{MeyersZiemer1977} we have
\[
 \int_{\RR^n}|\varphi|\,d\lambda
 \le C(n)\|V\|_{M^{p,\beta}(\RR^n)}^{p/\beta}\int_{\RR^n}|D\varphi|
 \qquad(\;\forall\varphi\in C_c^1(\RR^n)).
\]
By approximation we can take $\varphi=|V|$ and thus obtain \eqref{eq:morrey-bv}.
\end{proof}

The following lemma recalls a classical identity; see, for instance, \cite[Lem.\,A.9]{LiskevichLyakhovaMoroz2006}. We use it to characterize stable solutions in the proof of Theorem~\ref{thm:obstructions}.

\begin{lemma}\label{lem:ground-state}
Let $V\in L^\infty_{\loc}(\Omega)$ and let $\phi\in C^2(\Omega)$ be positive with $-\Delta\phi=V\phi$. Then
\begin{equation}\label{eq:ground-state}
 \int_\Omega\bigl(|D\xi|^2-V\xi^2\bigr)
 =\int_\Omega\phi^2\left|D\left(\frac\xi\phi\right)\right|^2
 \qquad(\xi\in C_c^1(\Omega)).
\end{equation}
In particular, the quadratic form on the left is nonnegative.
\end{lemma}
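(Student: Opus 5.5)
The identity \eqref{eq:ground-state} comes from the substitution $\xi=\phi\psi$ followed by one integration by parts using the equation $-\Delta\phi=V\phi$. Fix $\xi\in C_c^1(\Omega)$ and set $\psi=\xi/\phi$. Since $\phi\in C^2(\Omega)$ is positive, $\psi\in C_c^1(\Omega)$ and has the same support as $\xi$; in particular $\psi$ is an admissible test function, with no boundary terms to track. Expanding $D\xi=\psi D\phi+\phi D\psi$ gives
\[
 |D\xi|^2=\phi^2|D\psi|^2+2\phi\psi\,D\phi\cdot D\psi+\psi^2|D\phi|^2
 =\phi^2|D\psi|^2+D\phi\cdot D(\phi\psi^2).
\]
The second equality holds because $D(\phi\psi^2)=\psi^2D\phi+2\phi\psi D\psi$.

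Next I integrate over $\Omega$. The function $\phi\psi^2=\xi^2/\phi$ lies in $C_c^1(\Omega)$, and $\phi\in C^2$, so integration by parts gives
\[
 \int_\Omega D\phi\cdot D(\phi\psi^2)=\int_\Omega(-\Delta\phi)\,\phi\psi^2
 =\int_\Omega V\phi^2\psi^2=\int_\Omega V\xi^2 .
\]
All integrands are integrable, since $V\in L^\infty_{\loc}$ and the supports are compact. Subtracting yields $\int_\Omega(|D\xi|^2-V\xi^2)=\int_\Omega\phi^2|D\psi|^2$, which is exactly \eqref{eq:ground-state}. The right-hand side is a nonnegative integral, so the quadratic form is nonnegative.

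There is no genuine obstacle here. The only points to check are that $\xi/\phi$ and $\xi^2/\phi$ are $C^1$ with compact support, which follows from $\phi>0$ and $\phi\in C^2$, and that the integration by parts is legitimate. One could also use $\xi\in C_c^\infty$ and extend by density in $C^1$, but this is unnecessary because the computation works directly for $C^1_c$ test functions.
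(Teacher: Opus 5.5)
Your proof is correct: the substitution $\xi=\phi\psi$ with $\psi=\xi/\phi\in C_c^1(\Omega)$, followed by one integration by parts of $D\phi\cdot D(\xi^2/\phi)$ against $-\Delta\phi=V\phi$, is the standard ground-state (Picone-type) computation, and your justifications of regularity and compact support are adequate. The paper does not prove this lemma itself; it cites it as a classical identity from Liskevich, Lyakhova, and Moroz (Lem.~A.9), and your argument is the usual proof of that identity.
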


\subsection{Energy decay estimates}

The following lemma collects several useful energy decay estimates obtained in \cite{CabreFigalliRosOtonSerra2020,Peng2024,PengZhangZhou2024}. Specifically, \eqref{eq:radial-critical} follows from the proof of \cite[Lem.\,2.1]{Peng2024}, while \eqref{eq:radial-budget} and \eqref{eq:radial-high} follow from \cite[Lem.\,2.1]{CabreFigalliRosOtonSerra2020} and \cite[Lem.\,1.7]{PengZhangZhou2024}, respectively.

\begin{lemma}\label{lem:radial}
Let $u$ be a classical stable solution of \eqref{eq:pde} in $B_R(y)$. If $3\le n\le10$, then
\begin{equation}\label{eq:radial-critical}
 r^{2-n}\int_{B_r(y)}|Du|^2
 \le C(n)R^{2-n}\int_{B_R(y)\setminus B_{3R/4}(y)}|Du|^2
 \qquad(0<r\le R/2).
\end{equation}
If $3\le n\le9$, then also
\begin{equation}\label{eq:radial-budget}
 \int_{B_{R/3}(y)}|x-y|^{2-n}u_{r,y}^2
 \le C(n)R^{2-n}\int_{B_R(y)\setminus B_{3R/4}(y)}|Du|^2,
\end{equation}
where $u_{r,y}=(x-y)\cdot Du/|x-y|$. If $n\ge11$, then
\begin{equation}\label{eq:radial-high}
 r^{\beta_n-n}\int_{B_r(y)}|Du|^2
 \le C(n)R^{\beta_n-n}\int_{B_R(y)\setminus B_{3R/4}(y)}|Du|^2
 \qquad(0<r\le R/2).
\end{equation}
\end{lemma}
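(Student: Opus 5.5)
The plan is to derive all three estimates from the stability inequality tested with $\xi=z\eta$, where $z=(x-y)\cdot Du$ and $\eta$ is a radial cutoff. Since every estimate is translation- and scaling-invariant, we may assume $y=0$ and $R=1$. The starting point is \eqref{eq:dilation} combined with $Q_u(z\eta)\ge0$:
\[
 \int w^2\bigl((n-2)\eta^2+x\cdot D(\eta^2)\bigr)\le\int z^2|D\eta|^2+2\int z\,p\cdot D(\eta^2).
\]
Note that this never uses the sign of $f$. It relies only on stability and the Jacobi-type identity \eqref{eq:jacobi}, in which the source $2f(u)$ has already been absorbed. This is why the radial results of \cite{CabreFigalliRosOtonSerra2020,Peng2024,PengZhangZhou2024} carry over verbatim to sign-changing $f$.

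We take $\eta=|x|^{-a/2}\zeta(|x|)$ for $|x|\ge\varepsilon$, truncated near the origin (regularized at $|x|=\varepsilon$, then let $\varepsilon\to0$), where $\zeta$ is a cutoff equal to $1$ on $B_{3/4}$ and vanishing near $\partial B_1$. Write $w^2=u_r^2+|D_Tu|^2$ and $z=|x|u_r$. Inside $B_{3/4}$ the inequality then becomes a weighted quadratic comparison:
\[
 (n-2-a)\int |x|^{-a}w^2\le\Bigl(\frac{a^2}{4}+2a\Bigr)\int|x|^{-a}u_r^2+\text{(terms supported in }B_1\setminus B_{3/4}).
\]
Using $|x|^{-a}w^2\ge|x|^{-a}u_r^2$, one obtains control of $\int_{B_{3/4}}|x|^{-a}u_r^2$ as long as
\[
 n-2-a-\frac{a^2}{4}-2a>0,\qquad\text{equivalently}\qquad a^2+12a-4(n-2)<0 .
\]
With a more careful handling of the cross term $2\int z\,p\cdot D\eta^2$ (Cauchy--Schwarz with a free parameter, as in \cite{CabreFigalliRosOtonSerra2020}), the admissible range becomes $a<n-2+\dots$. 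In particular $a=n-2$ is allowed for $3\le n\le9$, which yields \eqref{eq:radial-budget} once the annular terms are bounded by $C\int_{B_1\setminus B_{3/4}}|Du|^2$. This is exactly \cite[Lem.\,2.1]{CabreFigalliRosOtonSerra2020}, and its proof goes through unchanged.

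For \eqref{eq:radial-critical} and \eqref{eq:radial-high}, I would follow \cite{Peng2024} and \cite{PengZhangZhou2024}. Choosing $\eta$ to be the critical power truncated at $r$, namely $\eta=\min\{r^{-a/2},|x|^{-a/2}\}\zeta$, produces a Morrey-type bound on $r^{a-n}\int_{B_r}w^2$ once the tangential part of $w^2$ is retained on the left. The exponent $a$ must keep the coefficient of $\int|x|^{-a}|D_Tu|^2$ nonnegative and the radial coefficient absorbable. The inequality $a^2-4(n-2-a)\cdot\ldots\le0$ then fixes the threshold: $a=n-2$ works for $n\le10$, and the Hardy-type optimal $a=n-\beta_n$ works for $n\ge11$, where $\beta_n=n-2-2\sqrt{n-1}$ arises as a root of this quadratic.

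The main obstacle is the endpoint dimension $n=10$. There the radial quadratic degenerates, with coefficient zero at $a=n-2$, so the naive argument gives no strict absorption. I would handle it as Peng does: exploit the inner region by taking $\eta=r^{-(n-2)/2}$ constant on $B_r$, so that the term $(n-2)\int_{B_r}w^2$ appears with a strictly positive factor without needing any weighted absorption. The remaining annular region $r<|x|<3/4$ then contributes terms whose coefficients are nonpositive exactly at criticality.

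Throughout, the annular error terms are bounded using $|z|\le|x|w$ and $|D\eta|\le C$ on $B_1\setminus B_{3/4}$.
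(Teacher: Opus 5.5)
Your strategy is the paper's own. The paper proves nothing new here: it invokes \cite[Lem.\,2.1]{CabreFigalliRosOtonSerra2020}, \cite[Lem.\,2.1]{Peng2024}, and \cite[Lem.\,1.7]{PengZhangZhou2024}. All three test stability with $z\eta$ through \eqref{eq:dilation}, so the sign of $f$ is irrelevant, as you note.

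However, the computation you display has the cross term with the wrong sign, and this breaks your own route to $a=n-2$. For $\eta^2=|x|^{-a}$ one has $D(\eta^2)=-a|x|^{-a-2}x$, so $2z\,p\cdot D(\eta^2)=-2a|x|^{-a}u_r^2$. This term is a gain, not a loss. Writing $w^2=u_r^2+|D_Tu|^2$, the correct inequality is
\[
 (n-2-a)\int_{B_{3/4}}|x|^{-a}|D_Tu|^2+\Bigl(n-2+a-\frac{a^2}{4}\Bigr)\int_{B_{3/4}}|x|^{-a}u_r^2
 \le C\int_{B_1\setminus B_{3/4}}|Du|^2 .
\]
At $a=n-2$ the tangential coefficient vanishes and the radial coefficient equals $(n-2)(10-n)/4$, which is positive exactly when $n\le 9$. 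That is the whole content of \cite[Lem.\,2.1]{CabreFigalliRosOtonSerra2020}. Your condition $a^2+12a<4(n-2)$ would allow only $a<2$ when $n=9$. The repair you propose, Cauchy--Schwarz with a free parameter on the cross term, cannot help: estimating that term in absolute value discards exactly the favorable contribution that makes $a=n-2$ admissible.

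With the sign corrected, your truncated weight $\eta^2=\min\{r^{-a},|x|^{-a}\}\zeta^2$ puts $(n-2)r^{-a}\int_{B_r}|Du|^2$ on the left. The power is $r^{-a}$, not $r^{a-n}$. On $B_{3/4}\setminus B_r$ the two coefficients above are, for $a\ge0$, nonnegative precisely when $a\le\min\{n-2,\,2+2\sqrt{n-1}\}$. The relevant quadratic is $a^2-4a-4(n-2)$, whose positive root is $2+2\sqrt{n-1}=n-\beta_n$; your ``$a^2-4(n-2-a)\cdot\ldots$'' should be this.

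Taking $a=n-2$ for $n\le10$ and $a=n-\beta_n$ for $n\ge11$ gives \eqref{eq:radial-critical} and \eqref{eq:radial-high} in one stroke. The annular error terms are bounded as you describe. In particular $n=10$ is not a separate obstacle: there both coefficients vanish, and the inner term $(n-2)r^{2-n}\int_{B_r}|Du|^2$ carries the estimate. That is the mechanism you correctly identify in your last paragraph, though it contradicts your earlier display. With these corrections your sketch faithfully reconstructs the cited proofs.
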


The following result establishes a key energy decay estimate used in the proofs of Theorems~\ref{thm:gradient} and~\ref{thm:holder} in dimensions $2\le n\le5$. The key idea for $3\le n\le5$ is to combine the equations satisfied by the rotational derivatives $W_{ij}=x_i\partial_j u-x_j\partial_i u$ with the stability inequality \eqref{eq:stability}. The case $n=2$ follows by a dimension-lifting argument.

\begin{lemma}\label{lem:rotation}
For $2\le n\le5$, every classical stable solution of \eqref{eq:pde} in $B_R(y)$ satisfies
\begin{equation}\label{eq:low-energy}
 \int_{B_r(y)}|Du|^2
 \le C(n)(r/R)^{n-2+2\alpha_n}\int_{B_R(y)}|Du|^2
 \qquad(0<r\le R/2),
\end{equation}
where $\alpha_n$ is defined in \eqref{eq:alpha}.
\end{lemma}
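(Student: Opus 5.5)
The plan is to prove \eqref{eq:low-energy} first for $3\le n\le5$, with $y=0$ and $R=1$ by translation and scaling. The starting point is that rotations preserve \eqref{eq:pde}. Hence each rotational derivative $W_{ij}=x_i\partial_ju-x_j\partial_iu$ solves the Jacobi equation $(-\Delta-f'_-(u))W_{ij}=0$, either by the same argument as for $\partial_iu$ in \eqref{eq:jacobi} or directly from it, using that $[\Delta,x_i\partial_j-x_j\partial_i]=0$. Collect the $W_{ij}$ into the vector $W=(W_{ij})_{i<j}$, so that $|W|^2=|x|^2|D_Tu|^2$, where $D_Tu$ is the tangential gradient.

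I would test the stability inequality \eqref{eq:stability} with $\xi=|W|\eta$. Since every component solves the Jacobi equation, a standard computation gives
\[
 \int\eta^2\Bigl(\sum_{i<j}|DW_{ij}|^2-\bigl|D|W|\bigr|^2\Bigr)\le\int|W|^2|D\eta|^2 .
\]
The crucial algebraic step is a pointwise lower bound for the Kato defect on the left. I would write
\[
 DW_{ij}=x_iD\partial_ju-x_jD\partial_iu+e_i\partial_ju-e_j\partial_iu .
\]
The zeroth-order part $e_i\partial_ju-e_j\partial_iu$ is not aligned with $W$. I expect this to yield a refined Kato inequality of the form
\[
 \sum|DW_{ij}|^2-\bigl|D|W|\bigr|^2\ \ge\ c_n\,\frac{|W|^2}{|x|^2}\quad\text{(up to an admissible error)}.
\]
Equivalently, this is a Hardy-type gain in the tangential variables, since the $W_{ij}$ restricted to spheres are derivatives along Killing fields. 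The argument would go by decomposing into radial and angular parts and using the spectrum of the Laplacian on $S^{n-1}$ for the angular components.

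Next, take $\eta=|x|^{-a}\zeta$ with a cutoff $\zeta$. Comparing the constant $c_n$ with the Hardy constant produced by $|D|x|^{-a}|^2$ yields an inequality of the form
\[
 \int_{B_{1/2}}|x|^{-2a}|D_Tu|^2\le C\int_{B_1}|Du|^2
\]
for all $a$ below a threshold $a_n$ determined by a quadratic equation. The value $a=(n-2+2\alpha)/2$ must be admissible, and solving for the endpoint should give $\alpha_3=1/2$, $\alpha_4=\sqrt2-1$ (root of $\alpha^2+2\alpha-1=0$) and $\alpha_5=\sqrt3-3/2$. A dyadic version of this weighted bound gives decay of the tangential energy $r^{2-n-2\alpha_n}\int_{B_r}|D_Tu|^2$, and I will likely need an iteration/hole-filling step to pass from the weighted estimate to a genuine decay statement.

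To recover the full gradient, I would apply the tangential estimate with rotations centered at every $y'\in B_{r}$ and average over $y'$. For a fixed point $x$, the tangential projection with respect to the center $y'$ annihilates only the direction $x-y'$. Averaging over centers in a ball of radius comparable to $r$ therefore controls $|Du(x)|^2$ up to a dimensional constant, which gives \eqref{eq:low-energy} for $3\le n\le5$.

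For $n=2$, I would lift: $\tilde u(x_1,x_2,x_3)=u(x_1,x_2)$ is a stable solution in $B_R(y)\times\RR$, because stability in the product follows by Fourier/separation in $x_3$. Applying the $n=3$ estimate, in which the energy on balls scales with one extra power of $r$, gives exponent $1=n-2+2\alpha_2$ with $\alpha_2=1/2$.

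The main obstacle is the refined Kato inequality with the sharp constant $c_n$, which is what produces the explicit $\alpha_n$. If the sharp pointwise form fails, my fallback is an integrated version: integrate the zeroth-order cross terms by parts against $|x|^{-2a}$ and absorb them using the radial estimate \eqref{eq:radial-budget}.
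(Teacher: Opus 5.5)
Your overall architecture is the paper's: rotational derivatives $W_{ij}$ solving the Jacobi equation, stability tested with $|W|\eta$, a Kato-type gain, a power weight, averaging over centers, and lifting for $n=2$. The genuine gap is the step you flag as the main obstacle. You leave the refined Kato inequality as an expectation, with unspecified $c_n$ and an ``admissible error'', and the tool you propose does not fit it. Spectral information on $S^{n-1}$ gives integral bounds, not pointwise ones, and the Kato defect $|DW|^2-|D|W||^2$ is not a quadratic form in $W$. Moreover, the exponents $\alpha_n$ you quote are exactly those for $c_n=n-2$ with no error term, so any loss would spoil the sharp exponents.

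The missing idea is a short pointwise computation, and your remark about the zeroth-order term is exactly the mechanism. Identify $W$ with $x\wedge Du\in\Lambda^2\RR^n$ and put $\nu=x/|x|$. Let $\Pi$ be the orthogonal projection onto $\Lambda^2(\nu^\perp)$, which is the orthogonal complement of $\nu\wedge\RR^n$. On $\{W\neq0\}$, both $W/|W|=\nu\wedge(D_\tau u/|D_\tau u|)$ and the Hessian part of $\partial_kW=\epsilon_k\wedge Du+x\wedge\partial_kDu$ lie in $\nu\wedge\RR^n$; here $\epsilon_k$ is the $k$-th coordinate vector and $D_\tau u$ the tangential gradient. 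Hence
\[
 |\partial_kW|^2-(\partial_k|W|)^2\ge|\Pi\,\partial_kW|^2
 =\bigl|(\epsilon_k-\nu_k\nu)\wedge D_\tau u\bigr|^2
 =(1-\nu_k^2)|D_\tau u|^2-(D_\tau u)_k^2 .
\]
Summing over $k$ gives exactly $(n-2)|D_\tau u|^2=(n-2)|W|^2/|x|^2$. The paper states the same fact as $|D(W/\rho)|^2\ge(n-2)/|x|^2$, writing $W/\rho=\nu\wedge\tau$. No fallback through \eqref{eq:radial-budget} is needed.

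Second, your weight does not reach the endpoint. With $\eta=|x|^{-a}\zeta$, the leading terms are $(n-2)|x|^{-2a}$ on the left and $a^2|x|^{-2a}$ on the right. So you only get decay $r^{2a}$ for $a<\sqrt{n-2}$, with a constant blowing up like $(n-2-a^2)^{-1}$. But $n-2+2\alpha_n=2\sqrt{n-2}$ is the endpoint itself, and hole-filling would only give a non-explicit exponent. The fix is the truncated weight $\eta=\max\{r,|x|\}^{-\sqrt{n-2}}\chi$, with $\chi=1$ on $B_{3R/4}$, which is also an admissible Lipschitz test function. Then $D\eta=0$ in $B_r$, while on $B_{3R/4}\setminus B_r$ one has $|x|^2|D\eta|^2=(n-2)\eta^2$, so these contributions cancel exactly. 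What remains is $(n-2)r^{-2\sqrt{n-2}}\int_{B_r}|D_\tau u|^2\le CR^{-2\sqrt{n-2}}\int_{B_R}|Du|^2$, the sharp tangential decay. Your averaging over centers and the lifting for $n=2$ are fine as described; stability of the lift follows slice by slice in $x_3$.
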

\begin{proof}
First let $3\le n\le5$ and assume $y=0$ for simplicity. For $i<j$ we set
$W_{ij}=x_i\partial_j u-x_j\partial_i u$ and $W=(W_{ij})_{i<j}\in\RR^{\binom n2}$, with
$|W|^2=\sum_{i<j}W_{ij}^2$ and $|DW|^2=\sum_{k=1}^n\sum_{i<j}|\partial_kW_{ij}|^2$.
Since rotations commute with the Laplacian, from \eqref{eq:jacobi} we can get
$(-\Delta-f'_-(u))W_{ij}=0$ for every $i<j$.
For $x\ne0$, we define the tangential gradient
\[
 D_\tau u=Du-\frac{x\cdot Du}{|x|^2}x.
\]
The identity
\[
 \sum_{i<j}(x_i\partial_j u-x_j\partial_i u)^2
 =|x|^2|Du|^2-(x\cdot Du)^2
 =|x|^2|D_\tau u|^2
\]
shows that, with $\rho=|W|$, one has $\rho=|x||D_\tau u|$.
On $\{\rho>0\}$, we regard $W/\rho$ as an $\RR^{\binom n2}$-valued map and use the notation
\[
 \left|D\left(\frac W\rho\right)\right|^2
 :=\sum_{k=1}^n\sum_{i<j}
 \left|\partial_k\left(\frac{W_{ij}}\rho\right)\right|^2.
\]

Take $\eta\in C_c^{0,1}(B_R)$. Testing $(-\Delta-f'_-(u))W_{ij}=0$ with $W_{ij}\eta^2$, summing over $i<j$, and integrating by parts gives
\[
 \int_{B_R} \eta^2|DW|^2+2\int_{B_R} \eta\rho D\rho\cdot D\eta
       -\int_{B_R} f'_-(u)\rho^2\eta^2=0.
\]
On the other hand, taking $\xi=\rho\eta$ in \eqref{eq:stability} gives
\[
 0\leq Q_u(\rho\eta)=\int_{B_R} \eta^2|D\rho|^2
 +2\int_{B_R} \eta\rho D\rho\cdot D\eta
 +\int_{B_R} \rho^2|D\eta|^2-\int_{B_R} f'_-(u)\rho^2\eta^2.
\]
Since $DW=0$ almost everywhere on $\{\rho=0\}$, and on $\{\rho>0\}$ one has
$|DW|^2-|D\rho|^2=\rho^2|D(W/\rho)|^2$, from the above two estimates we have
\begin{equation}\label{eq:vector-rotation}
 \int_{B_R\cap\{\rho>0\}}\rho^2
      \left|D\left(\frac W\rho\right)\right|^2\eta^2
 \le\int_{B_R}\rho^2|D\eta|^2.
\end{equation}

Next we estimate the derivative of $W/\rho$. On $\{\rho>0\}$, we put $e=x/|x|$ and
$\tau=D_\tau u/|D_\tau u|$. Then $e\cdot\tau=0$ and
$W_{ij}/\rho=e_i\tau_j-e_j\tau_i$. Fix $k$ and write $a=\partial_ke$ and $b=\partial_k\tau$. Since $|e|=|\tau|=1$ and $e\cdot\tau=0$, if $\lambda=a\cdot\tau=-e\cdot b$, then
$a=\lambda\tau+a^\perp$ and $b=-\lambda e+b^\perp$, where $a^\perp,b^\perp$ are orthogonal to both $e$ and $\tau$. Hence
\[
 \partial_k\left(\frac{W_{ij}}\rho\right)
 =a_i^\perp\tau_j-a_j^\perp\tau_i
   +e_i b_j^\perp-e_j b_i^\perp.
\]
Using the elementary identity
$\sum_{i<j}(v_iw_j-v_jw_i)^2=|v|^2|w|^2-(v\cdot w)^2$, and the above orthogonality, we obtain
\[
 \sum_{i<j}\left|\partial_k\left(\frac{W_{ij}}\rho\right)\right|^2
 =|a^\perp|^2+|b^\perp|^2\ge |a^\perp|^2.
\]
Since
$|a^\perp|^2=|\partial_ke|^2-|\partial_ke\cdot\tau|^2$,
summing in $k$ gives
\[
 \left|D\left(\frac W\rho\right)\right|^2
 \ge\frac{n-2}{|x|^2}.
\]
Together with \eqref{eq:vector-rotation} and $\rho=|x||D_\tau u|$, this yields
\begin{equation}\label{eq:rotation-test}
 (n-2)\int_{B_R}|D_\tau u|^2\eta^2
 \le\int_{B_R}|x|^2|D_\tau u|^2|D\eta|^2.
\end{equation}
By taking $\eta=\max\{r,|x|\}^{-\sqrt{n-2}}\chi$, where $\chi=1$ on $B_{3R/4}$, is compactly supported in $B_R$, and satisfies $|D\chi|\le C/R$, we obtain from \eqref{eq:rotation-test} that
\begin{equation}\label{eq:angular-decay}
 \int_{B_r}|D_\tau u|^2
 \le C(n)(r/R)^{2\sqrt{n-2}}\int_{B_R}|Du|^2.
\end{equation}

To recover the full gradient on the left, we just need to average \eqref{eq:angular-decay} over different centers, as in \cite[Proof of Thm.\,1.2, Step 2]{CabreFigalliRosOtonSerra2020}. We omit the details. Finally, to treat the case $n=2$, one just needs to apply a standard dimension lifting argument.
\end{proof}

The following lemma constructs vector fields satisfying the divergence identities \eqref{eq:reaction-div} and the $L^1$ bounds \eqref{eq:reaction-size}. The main idea is to first establish the general estimate \eqref{eq:convex-form} for a suitable class of convex functions $\Phi$. By choosing $\Phi$ appropriately and combining this estimate with the stability inequality \eqref{eq:stability}, we obtain bounded linear functionals on a suitable Hilbert space. The Riesz representation theorem then yields the desired vector fields.

\begin{lemma}\label{lem:reaction}
Let $\Omega_1\Subset\Omega$, let $\eta\in C_c^\infty(\Omega)$ be equal to one on a neighborhood of $\ol{\Omega_1}$, and let $u$ be a classical stable solution of \eqref{eq:pde} in $\Omega$. Set $z=x\cdot Du$ and $q=Q_u(z\eta)$. Then there are vector fields $W_i,W_{ik}\in L^1(\Omega_1;\RR^n)$ such that
\begin{equation}\label{eq:reaction-div}
 f(u)\partial_i u=\operatorname{div}W_i,
 \qquad f(u)\frac{\partial_i u\partial_k u}{|Du|}
       =\operatorname{div}W_{ik}
 \quad\hbox{in }\mathcal D'(\Omega_1),
\end{equation}
where the quotient is zero on $\{x\in\Omega_1:Du(x)=0\}$, and
\begin{equation}\label{eq:reaction-size}
 \|W_i\|_{L^1(\Omega_1)}\le\tfrac12\sqrt q\,\|Du\|_{L^2(\Omega_1)},
 \qquad
 \|W_{ik}\|_{L^1(\Omega_1)}\le\tfrac72\sqrt q\,\|Du\|_{L^2(\Omega_1)}.
\end{equation}
Here for a vector field $W\in L^1(\Omega_1;\RR^n)$, 
$\|W\|_{L^1(\Omega_1)}:=\int_{\Omega_1}|W(x)|\,dx$.
\end{lemma}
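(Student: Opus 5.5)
The plan is to obtain both divergence representations from the Riesz representation theorem on the weighted Hilbert space
\[
 \mathcal H=\ol{\{D\varphi:\varphi\in C_c^\infty(\Omega_1)\}}^{\,L^2(\Omega_1,|Du|^2dx;\RR^n)} .
\]
Suppose a linear functional of the form $\ell(\varphi)=-\int_{\Omega_1} g\,\varphi$ satisfies
\[
 |\ell(\varphi)|\le M\Bigl(\int_{\Omega_1}|Du|^2|D\varphi|^2\Bigr)^{1/2}.
\]
Then Riesz gives $G\in\mathcal H$ with $\int|Du|^2G\cdot D\varphi=\ell(\varphi)$ for all $\varphi$ and $\int|Du|^2|G|^2\le M^2$. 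Setting $W=-|Du|^2G$, we get $\operatorname{div}W=g$ in $\mathcal D'(\Omega_1)$. By Cauchy--Schwarz, $\|W\|_{L^1(\Omega_1)}\le\|Du\|_{L^2(\Omega_1)}\,\||Du|G\|_{L^2}\le M\|Du\|_{L^2(\Omega_1)}$. So everything reduces to proving the weighted bound with $M=\tfrac12\sqrt q$ for $g=f(u)\partial_iu$, and with $M=\tfrac72\sqrt q$ for $g=f(u)\partial_iu\partial_ku/|Du|$.

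For $g=f(u)\partial_iu$, I use the Jacobi equations \eqref{eq:jacobi}. Since $\eta\equiv1$ near $\supp\varphi$ and $(-\Delta-f'_-(u))z=2f(u)$, we have
\[
 Q_u(z\eta,\partial_iu\,\varphi)=2\int f(u)\partial_iu\,\varphi .
\]
Because $Q_u$ is a nonnegative symmetric form, Cauchy--Schwarz gives $|Q_u(z\eta,\xi)|\le\sqrt q\,Q_u(\xi)^{1/2}$. Since $\partial_iu$ solves the linearized equation, the standard identity (test the equation with $\partial_iu\,\varphi^2$) gives
\[
 Q_u(\partial_iu\,\varphi)=\int(\partial_iu)^2|D\varphi|^2\le\int|Du|^2|D\varphi|^2 .
\]
This yields $M=\tfrac12\sqrt q$.

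For the second field, I would first prove the general estimate \eqref{eq:convex-form} for a convex, positively $1$-homogeneous $\Phi\ge0$ with $|\Phi(p)|\le L|p|$:
\[
 \Bigl|\int f(u)\Phi(Du)\varphi\Bigr|\le\tfrac12\sqrt q\,L\Bigl(\int|Du|^2|D\varphi|^2\Bigr)^{1/2}.
\]
The mechanism has three steps.
\begin{itemize}
\item By homogeneity, $D\Phi(p)\cdot p=\Phi(p)$. Using the equation for $\partial_ju$, one finds $(-\Delta-f'_-(u))\Phi(Du)=-D^2\Phi(Du)[D\partial_ju,D\partial_ju]\le0$, because the $f'_-$ terms cancel. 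So $\Phi(Du)$ is a nonnegative subsolution.
\item Hence $Q_u(\Phi(Du)\varphi)\le\int\Phi(Du)^2|D\varphi|^2\le L^2\int|Du|^2|D\varphi|^2$.
\item The identity $Q_u(z\eta,\Phi(Du)\varphi)=2\int f(u)\Phi(Du)\varphi$ together with Cauchy--Schwarz for $Q_u$ then closes the estimate.
\end{itemize}
I then write
\[
 \frac{p_ip_k}{|p|}=\tfrac12\bigl(\Phi_+(p)-\Phi_-(p)\bigr),\qquad \Phi_\pm(p)=c|p|\pm\frac{p_ip_k}{|p|},
\]
and choose $c$ so that $\Phi_\pm$ are convex. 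I expect an absolute $c$ (e.g.\ $c=3$) to work, because the Hessian of $p_ip_k/|p|$ is bounded by $C/|p|$ on directions orthogonal to $p$, while $|p|$ has Hessian $(I-\hat p\otimes\hat p)/|p|$. This gives $L\le c+1$ for each $\Phi_\pm$, and summing the two estimates produces a constant of the form $\tfrac12(c+1)\sqrt q$, consistent with $\tfrac72$.

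The main obstacle is rigor in the subsolution step. Here $u$ is only $C^2$ and $f$ is only locally Lipschitz, and $\Phi$ is singular at $p=0$.
\begin{itemize}
\item Since $f(u)\in C^{0,1}_{\loc}$, elliptic regularity gives $u\in W^{3,s}_{\loc}$ for all $s<\infty$, so $\partial_ju\in W^{2,s}_{\loc}$ and \eqref{eq:jacobi} holds a.e.; this makes the computation of $\Delta\Phi(Du)$ legitimate.
\item I would replace $\Phi$ by a smooth convex approximation $\Phi_\varepsilon(p)=\Phi\ast\rho_\varepsilon$ (or use $\sqrt{|p|^2+\varepsilon^2}$ in the $|p|$ part). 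This is no longer homogeneous, so a defect term $f'_-(u)(D\Phi_\varepsilon\cdot Du-\Phi_\varepsilon)$ appears. It is $O(\varepsilon)$ times $f'_-(u)\in L^\infty_{\loc}$, and vanishes as $\varepsilon\to0$ in the quadratic-form computation.
\item Finally, pass to the limit in the bilinear identity by dominated convergence.
\end{itemize}
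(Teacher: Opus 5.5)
Your proposal is correct and is essentially the paper's proof. The shared ingredients are the subsolution bound \eqref{eq:convex-form} for convex, positively $1$-homogeneous $\Phi\ge0$ (with mollification absorbing the homogeneity defect), the identity $2\int f(u)\sigma(Du)\psi=Q_u(z\eta,\sigma(Du)\psi)$ combined with Cauchy--Schwarz for $Q_u$, and Riesz representation; your weighted space $L^2(|Du|^2dx)$ is just an isometric copy of the paper's closure of $\{|Du|D\psi\}$ in $L^2$. The one real variation is in the second field. You split $\sigma_{ik}=\tfrac12(\Phi_+-\Phi_-)$ and add the two linear bounds, which gives the sharper constant $2\sqrt q$. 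The paper instead writes $\sigma_{ik}=(\sigma_{ik}+3|p|)-3|p|$ and uses the triangle inequality for $Q_u^{1/2}$, which gives $\tfrac72\sqrt q$. There is also one sign slip: with $\ell(\varphi)=-\int g\varphi$ and $W=-|Du|^2G$ you actually get $\operatorname{div}W=-g$, so drop one of the two minus signs.
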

\begin{proof}
First we claim that, if $\Phi:\RR^n\to[0,\infty)$ is convex, Lipschitz, and positively homogeneous of degree one, then
\begin{equation}\label{eq:convex-form}
 0\le Q_u(\Phi(Du)\psi)
 \le\int_{\Omega_1}\Phi(Du)^2|D\psi|^2
 \qquad(\forall\;\psi\in C_c^1(\Omega_1)).
\end{equation}
The first inequality follows from the stability \eqref{eq:stability}. To prove the second inequality, first we suppose that $\Phi$ is smooth and put $V=f'_-(u)$. By \eqref{eq:jacobi}, $(-\Delta-V)\partial_i u=0$ in $\mathcal D'(\Omega_1)$. Since Euler's identity gives $D\Phi(p)\cdot p=\Phi(p)\;(p\in\RR^n)$, the chain rule and convexity yield
\[
 \begin{aligned}
 (-\Delta-V)\Phi(Du)
 &=-\sum_i\Phi_i(Du)\Delta(\partial_i u)
   -\sum_{\alpha=1}^nD^2\Phi(Du)
       [\partial_\alpha Du,\partial_\alpha Du]-V\Phi(Du)\\
 &=-\sum_{\alpha=1}^nD^2\Phi(Du)
       [\partial_\alpha Du,\partial_\alpha Du]\le0
 \end{aligned}
\]
in $\Omega_1$. Testing this inequality in $\Omega_1$ with $\Phi(Du)\psi^2$ gives
\[
 \int_{\Omega_1}\psi^2|D\Phi(Du)|^2
 +2\int_{\Omega_1}\Phi(Du)\psi D\Phi(Du)\cdot D\psi
 -\int_{\Omega_1}V\Phi(Du)^2\psi^2\le0.
\]
Expanding $Q_u(\Phi(Du)\psi)$ and using this inequality proves \eqref{eq:convex-form}.

For general $\Phi$, set $\Phi_\varepsilon=\Phi*\rho_\varepsilon$  with a standard mollifier $\rho_\varepsilon$ on $\RR^n$. Convexity is preserved, and Euler's identity for $\Phi$ gives
$|D\Phi_\varepsilon(p)\cdot p-\Phi_\varepsilon(p)|\le C\operatorname{Lip}(\Phi)\varepsilon$. Since $V$ is bounded on the support of $\psi$, the preceding calculation has an error tending to zero with $\varepsilon$. Therefore, a standard convergence argument yields \eqref{eq:convex-form}.

Set $\sigma_i(p)=p_i$, and $\sigma_{ik}(p)=\frac{p_ip_k}{|p|},\;\sigma_{ik}(0)=0$. The functions $\Phi_1(p):=\sigma_{ik}(p)+3|p|$ and $\Phi_2(p):=3|p|$ are nonnegative, convex, and homogeneous of degree one. Therefore, we can apply \eqref{eq:convex-form} to $\Phi_1$ and $\Phi_2$ to obtain
\[
 Q_u(\Phi_1(Du)\psi)^{1/2}\le4\||Du||D\psi|\|_{L^2(\Omega_1)},
 \qquad
 Q_u(\Phi_2(Du)\psi)^{1/2}\le3\||Du||D\psi|\|_{L^2(\Omega_1)}.
\]
Since $Q_u$ is nonnegative on $H_0^1(\Omega_1)$, the triangle inequality gives
\[
 \begin{aligned}
 Q_u(\sigma_{ik}(Du)\psi)^{1/2}
 &\le Q_u(\Phi_1(Du)\psi)^{1/2}
      +Q_u(\Phi_2(Du)\psi)^{1/2}\\
 &\le7\||Du||D\psi|\|_{L^2(\Omega_1)}.
 \end{aligned}
\]
For $\sigma_i$, by definition we have $Q_u(\sigma_{i}(Du)\psi)^{1/2}\leq\||Du||D\psi|\|_{L^2(\Omega_1)}$.

For either $\sigma=\sigma_i$ or $\sigma_{ik}$, we can obtain from \eqref{eq:jacobi} that
\[
 2\int_{\Omega_1}f(u)\sigma(Du)\psi
 =Q_u(z\eta,\sigma(Du)\psi).
\]
By the Cauchy--Schwarz inequality for the nonnegative form $Q_u$ we deduce
\[
 \left|\int_{\Omega_1}f(u)\sigma(Du)\psi\right|
 \le\frac{c_\sigma}{2}\sqrt q\,
       \||Du||D\psi|\|_{L^2(\Omega_1)},
 \qquad c_{\sigma_i}:=1,\quad c_{\sigma_{ik}}:=7.
\]

Set
\[
 \mathcal H=\overline{\{|Du|D\psi:\psi\in C_c^1(\Omega_1)\}}^{\,L^2(\Omega_1;\RR^n)}
\]
and define $\Lambda_\sigma(|Du|D\psi)=\int_{\Omega_1}f(u)\sigma(Du)\psi$. The preceding bound shows that $\Lambda_\sigma$ is well defined as a functional of $|Du|D\psi$, and it extends uniquely to a bounded linear functional on $\mathcal H$, with $\|\Lambda_\sigma\|_{\mathcal H^*}\le c_\sigma\sqrt q/2$. By the Riesz representation theorem, there is $Z_\sigma\in\mathcal H\subseteq L^2(\Omega_1;\RR^n)$ such that
\[
 \Lambda_\sigma(Y)=\int_{\Omega_1}Z_\sigma\cdot Y
 \quad(\forall\;Y\in\mathcal H),
 \qquad
 \|Z_\sigma\|_{L^2(\Omega_1)}\le\frac{c_\sigma}{2}\sqrt q.
\]
Taking $Y=|Du|D\psi$ and setting $W_\sigma=-|Du|Z_\sigma$ gives
\[
 \int_{\Omega_1}f(u)\sigma(Du)\psi
 =-\int_{\Omega_1}W_\sigma\cdot D\psi
 =\langle\operatorname{div}W_\sigma,\psi\rangle;
\]
thus $f(u)\sigma(Du)=\operatorname{div}W_\sigma$ in $\mathcal D'(\Omega_1)$. Moreover, by H\"older's inequality we have
\[
 \|W_\sigma\|_{L^1(\Omega_1)}
 \le\|Du\|_{L^2(\Omega_1)}\|Z_\sigma\|_{L^2(\Omega_1)}
 \le\frac{c_\sigma}{2}\sqrt q\,\|Du\|_{L^2(\Omega_1)}.
\]
Taking $W_i=W_{\sigma_i}$ and $W_{ik}=W_{\sigma_{ik}}$ proves \eqref{eq:reaction-div}--\eqref{eq:reaction-size}.
\end{proof}

We next establish a quantitative result showing that, if a function $h$ is bounded above by some $k\in W^{1,1}$ and satisfies $Dh=\Div W$ for a matrix field $W$ with small $L^1$ norm, then $h$ is close to a constant on a smaller domain, provided suitable integral bounds hold. The smallness of $W$ alone is insufficient to guarantee this conclusion. Instead, we first apply the classical weak $(1,1)$ estimate for singular integrals to derive the superlevel-set bound \eqref{add4}. The inequality $h\le k$, together with the Sobolev embedding $W^{1,1}\hookrightarrow L^{n/(n-1)}$, then yields the improved estimate \eqref{eq:primitive-three-bounds}. Finally, the layer-cake formula gives the desired $L^1$ bound for $h-b$.

\begin{lemma}\label{lem:primitive}
Fix $n\ge2$ and let $A_j=B_{2^j}\setminus\ol{B_{2^{-j}}}$. Suppose that
$h\in L^1(A_5)$, $k\in W^{1,1}(A_5)$, and
$W\in L^1(A_5;\RR^{n\times n})$ satisfy $h\le k$ almost everywhere in $A_5$ and
\[
 Dh=\Div W\quad\hbox{in }\mathcal D'(A_5),\qquad
 \|h\|_{L^1(A_5)}+\|k\|_{W^{1,1}(A_5)}\le M,
 \qquad \|W\|_{L^1(A_5)}\le\varepsilon.
\]
Then there exists $b\in\RR$, $|b|\le C(n)M$, such that
\begin{equation}\label{eq:primitive}
 \|h-b\|_{L^1(A_4)}\le C(n)M^{1/2}\varepsilon^{1/2}.
\end{equation}
\end{lemma}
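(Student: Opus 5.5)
The plan is to split $h$, on a slightly smaller annulus, into a harmonic part plus a Calder\'on--Zygmund image of $W$, which is small in weak $L^1$. Then I will use $h\le k$ to tame the tails, and conclude with the layer-cake formula. Throughout, the annuli $A_j$ are connected since $n\ge2$.

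\emph{Step 1: splitting.} Fix a cutoff $\chi\in C_c^\infty(A_5)$ with $\chi=1$ on $A_{9/2}$, and let $\Gamma$ be the fundamental solution. Set $v_m=\sum_j\partial_j\Gamma*(\chi W_{mj})$ and $g=\sum_m\partial_m v_m$. Then $g$ is a sum of second-order Riesz transforms $R_mR_j(\chi W_{mj})$, and the weak $(1,1)$ estimate gives
\[
|\{|g|>\lambda\}|\le C(n)\varepsilon/\lambda .
\]
On $A_{9/2}$ we have $\Delta v_m=\partial_j(\chi W_{mj})=\partial_m h$, hence $\Delta g=\Delta h$, so $w:=h-g$ is harmonic there. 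Moreover $\partial_m w=\sum_j\partial_j(\partial_jv_m-\partial_mv_j)$ is again a combination of Riesz-type transforms of $\chi W$, so $|\{|Dw|>\lambda\}|\le C\varepsilon/\lambda$ on $A_{9/2}$. Interior estimates for the harmonic function $Dw$, in the form valid for $L^q$ quasi-norms with $q<1$ (which are controlled by weak $L^1$), give $\sup_{A_{17/4}}|Dw|\le C\varepsilon$. Hence there is $b$ with $|w-b|\le C\varepsilon$ on $A_{17/4}$. The bound $|b|\le CM$ follows by averaging, using $\|h\|_{L^1}\le M$ and the weak bound on $g$. This yields the superlevel-set estimate
\[
|\{x\in A_{17/4}:\ |h-b|>\lambda\}|\le C\varepsilon/\lambda\qquad(\lambda\ge C\varepsilon).
\]

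\emph{Step 2: improved tails.} The weak bound alone only gives $\int|h-b|\lesssim\varepsilon\log(M/\varepsilon)$ plus a divergent high tail, so the large-$\lambda$ range needs separate control. The upper tail uses $h\le k$ together with Sobolev, $\|k\|_{L^{n/(n-1)}}\le CM$. On $E_\lambda=\{h-b>\lambda\}$ we have $\lambda|E_\lambda|\le\int_{E_\lambda}(k-b)^+\le CM|E_\lambda|^{1/n}$. Combining this with $|E_\lambda|\le C\varepsilon/\lambda$ interpolates to a decay of order $M\varepsilon/\lambda^2$.

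For the lower tail, I plan to use that the mean of $h-b$ over a suitable subannulus is controlled. Integrate $Dh=\Div W$ against $x\,\zeta$ for a radial cutoff $\zeta$; this expresses $\int(h-b)\operatorname{div}(x\zeta)$ through $\int W:D(x\zeta)=O(\varepsilon)$ plus $O(\varepsilon)$ errors. Hence $\int(h-b)^-\le\int(h-b)^++C\varepsilon$ in the weighted sense, so the negative part inherits the bound of the positive part. The target is the combined estimate
\[
|\{x\in A_4:\ |h-b|>\lambda\}|\le C\min\Bigl\{1,\ \frac{\varepsilon}{\lambda},\ \frac{M\varepsilon}{\lambda^2}\Bigr\}.
\]

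\emph{Step 3: conclusion.} By the layer-cake formula,
\[
\|h-b\|_{L^1(A_4)}\le C\int_0^\infty\min\Bigl\{1,\ \frac{M\varepsilon}{\lambda^2}\Bigr\}\,d\lambda\le C\sqrt{M\varepsilon},
\]
which is \eqref{eq:primitive}.

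The main obstacle is Step 2, and specifically the lower tail: $h\le k$ gives only one-sided information. If the moment identity with $x\zeta$ does not close, the fallback is a second choice of test field in $Dh=\Div W$ that isolates $\int(h-b)$ on a subannulus, followed by optimizing $b$ by a median-type choice.
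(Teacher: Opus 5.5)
Your architecture matches the paper's: a Calder\'on--Zygmund splitting $h=g+w$, a nearly constant harmonic part, $h\le k$ plus Sobolev for the upper tail, then layer-cake. The gap is the step you flag as the crux, the lower tail, which neither your primary plan nor your fallback closes.

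\textbf{The lower tail.} Testing $Dh=\Div W$ against any compactly supported field $\Phi$ only gives $\int h\,\operatorname{div}\Phi=\int W:D\Phi$, and $\operatorname{div}\Phi$ always has zero integral. For $\Phi=x\zeta$ the weight $n\zeta+|x|\zeta'(|x|)$ changes sign. The equation sees $h$ only modulo constants, so no test field yields a nonnegative weight $\phi$ with $|\int\phi(h-b)|\lesssim\varepsilon$. With a sign-changing weight, splitting $h-b$ into $(h-b)^\pm$ controls nothing.

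A median choice of $b$ does not help either: it balances measures, not integrals. The weak bound $C\varepsilon/\lambda$ and $\|h\|_{L^1}\le M$ alone still allow $\int(h-b)^-$ of order $M$, by spreading the negative part over many dyadic scales.

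The missing idea is the choice of $b$. Fix $\zeta\in C_c^\infty$ supported where your splitting is valid, with $\zeta\ge0$, $\int\zeta=1$ and $\zeta\ge c(n)$ on $A_4$. Set $b:=\int h\zeta$, so $|b|\le CM$ trivially. Then $\int\zeta(h-b)=0$, hence $\int_{A_4}(h-b)^-\le C\int\zeta(h-b)^-=C\int\zeta(h-b)^+$, and the lower tail is inherited from the upper one. This $b$ is within $C\varepsilon$ of your harmonic constant, because $|\int\zeta g|\le C\varepsilon$ (the functions $\partial_i\partial_j\Gamma*\zeta$ are bounded). Equivalently, your own $b$ already satisfies $|\int\zeta(h-b)|\le C\varepsilon$, which suffices.

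\textbf{Two smaller steps also need repair.}
\begin{enumerate}
\item[(a)] $\partial_m w=\sum_j\partial_j(\partial_jv_m-\partial_mv_j)$ is a \emph{derivative} of a Calder\'on--Zygmund image of $\chi W$, not such an image itself. So no weak $(1,1)$ bound for $Dw$ on $A_{9/2}$ follows. Get $\sup|Dw|\le C\varepsilon$ directly instead: with a radial mollifier $\vartheta$, the mean-value property gives $Dw=Dw*\vartheta=(\Div W)*\vartheta-g*D\vartheta$ on a smaller annulus. Both terms are convolutions of $W$ with bounded smooth kernels.
\item[(b)] The bound $\min\{\varepsilon/\lambda,(M/\lambda)^{n/(n-1)}\}$ does not interpolate to $M\varepsilon/\lambda^2$. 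For $n\ge2$, no convex combination of the two bounds produces $\lambda^{-2}$ together with $\varepsilon^1$. Integrate the three-term minimum directly instead. The crossover is at $\lambda\approx M^n\varepsilon^{1-n}$, and the layer-cake integral of $(h-b)_+$ is $C\varepsilon\bigl(1+\log(M/\varepsilon)\bigr)\le C\sqrt{M\varepsilon}$ for $\varepsilon<M$. By the mean-zero identity above, this also bounds $(h-b)_-$.
\end{enumerate}
With these repairs your argument becomes the paper's.
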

\begin{proof}
The cases $M=0$ and $\varepsilon\ge M$ are immediate. If $\varepsilon=0$, then $W=0$ and hence $Dh=0$ in $\mathcal D'(A_5)$. Since $A_5$ is connected, $h$ is constant almost everywhere, and the conclusion follows. Therefore, by normalization, we may assume $M=1$ and $0<\varepsilon<1$ in the following.

Set
\[
 U_1=B_{20}\setminus\ol{B_{1/20}},\qquad
 U_2=B_{24}\setminus\ol{B_{1/24}},
\]
so that $\ol{A_4}\Subset U_1\Subset U_2\Subset A_5$. Choose
$\chi\in C_c^\infty(A_5)$ with $\chi=1$ on a neighborhood of $\ol{U_2}$, and extend $\chi W$ by zero from $A_5$ to $\RR^n$. Let $\Gamma$ be the fundamental solution such that $\Delta\Gamma=\delta_0$. Consider the distribution
\begin{equation}\label{eq:PW-def}
 P_W:=\sum_{i,j=1}^n\partial_i\partial_j\Gamma*(\chi W_{ij})
 \qquad\hbox{in }\mathcal D'(\RR^n).
\end{equation}
Explicitly, for $\varphi\in C_c^\infty(\RR^n)$,
\[
 \langle P_W,\varphi\rangle
 =\sum_{i,j=1}^n\int_{A_5}\chi(y)W_{ij}(y)
       (\partial_i\partial_j\Gamma*\varphi)(y)\,dy.
\]
Since $\chi=1$ on $U_2$, \eqref{eq:PW-def} and $Dh=\Div W$ give, in $\mathcal D'(U_2)$,
\[
 \Delta P_W=\sum_{i,j}\partial_i\partial_jW_{ij}
 =\Delta h.
\]
Hence $H:=h-P_W$ satisfies $\Delta H=0$ in $\mathcal D'(U_2)$. By Weyl's lemma, $H$ has a smooth harmonic representative on $U_2$, and consequently $P_W=h-H\in L^1_{\loc}(U_2)$.

For $1\le i,j\le n$, let
\[
 T_{ij}F:=\partial_i\partial_j\Gamma*F
 \qquad\hbox{in }\mathcal D'(\RR^n).
\]
The distribution kernel has the standard Calder\'on--Zygmund decomposition
\[
 \partial_i\partial_j\Gamma
 =\operatorname{p.v.}K_{ij}+c_n\delta_{ij}\delta_0,
\]
where $K_{ij}$ is homogeneous of degree $-n$, smooth on $\RR^n\setminus\{0\}$, and has zero spherical mean. Hence the classical weak $(1,1)$ estimate for singular integrals (see, for example,
\cite[Chap.\,II, Sec.\,1]{Stein1970}), together with Chebyshev's inequality for the diagonal local term, gives
\[
 |\{x\in\RR^n:|T_{ij}F(x)|>t\}|
 \le \frac{C(n)}{t}\|F\|_{L^1(\RR^n)}
 \qquad(F\in L^1(\RR^n),\ t>0).
\]
Applying this to $F_{ij}=\chi W_{ij}$ yields
\begin{equation}\label{eq:primitive-weak}
 |\{x\in U_1:|P_W(x)|>t\}|
 \le \frac{C(n)\varepsilon}{t}
 \qquad(t>0).
\end{equation}

Choose $r_0>0$ small such that $U_1+B_{r_0}\Subset U_2$. Define
\[
 \vartheta_0(x)=c_n
 \begin{cases}
 \exp\!\left(-\dfrac1{1-|x|^2}\right),&|x|<1,\\[4pt]
 0,&|x|\ge1,
 \end{cases}
 \qquad \int_{\RR^n}\vartheta_0=1,
\]
and set $\vartheta(x)=r_0^{-n}\vartheta_0(x/r_0)$. Since $H$ is harmonic in $U_2$, the mean-value property gives $H=H*\vartheta$ on $U_1$. As $Dh=\Div W$, on $U_1$ we have that
\[
 DH=D(h*\vartheta)-D(P_W*\vartheta)
    =(\Div W)*\vartheta-D(P_W*\vartheta).
\]
For the first term on the right, componentwise,
\[
 [((\Div W)*\vartheta)(x)]_i
 =\sum_{j=1}^n\int_{A_5}W_{ij}(y)\,\partial_j\vartheta(x-y)\,dy,
 \qquad x\in U_1.
\]
Thus we have
\[
 \|(\Div W)*\vartheta\|_{L^\infty(U_1)}
 \le C(n)\|D\vartheta\|_{L^\infty(\RR^n)}\|W\|_{L^1(A_5)}
 \le C(n)\varepsilon.
\]
For the second term, we can write
\[
 D(P_W*\vartheta)
 =\sum_{i,j}\bigl(D\partial_i\partial_j\Gamma*\vartheta\bigr)
       *(\chi W_{ij}).
\]
As each $D\partial_i\partial_j\Gamma*\vartheta$ belongs to $C^\infty(\RR^n)$, we have
\[
 \|D(P_W*\vartheta)\|_{L^\infty(U_1)}
 \le C(n)\|W\|_{L^1(A_5)}\le C(n)\varepsilon.
\]
Consequently, we obtain
\begin{equation}\label{eq:H-osc}
 \|DH\|_{L^\infty(U_1)}\le C(n)\varepsilon,
 \qquad \operatorname*{osc}_{U_1}H\le C(n)\varepsilon.
\end{equation}

Choose $\zeta\in C_c^\infty(U_1)$ with $\zeta\ge0$, $\int_{U_1}\zeta=1$, and $\zeta\ge c(n)>0$ on $A_4$, and set
$b=\int_{U_1}h\zeta$. Then $|b|\le C(n)$. Moreover,
\[
 |\langle P_W,\zeta\rangle|
 \le C(n)\|W\|_{L^1(A_5)}\le C(n)\varepsilon,
\]
because every $\partial_i\partial_j\Gamma*\zeta$ belongs to $C^\infty(\RR^n)$. Set $H_\zeta=\int_{U_1}H\zeta$; then
$b-H_\zeta=\langle P_W,\zeta\rangle$, while \eqref{eq:H-osc} gives
$|H-H_\zeta|\le C(n)\varepsilon$ on $U_1$. Hence
$|H-b|\le C(n)\varepsilon$ on $U_1$. Combining this with \eqref{eq:primitive-weak} gives
\begin{align}\label{add4}
    |\{x\in U_1:|h(x)-b|>t\}|
 \le \min\{|U_1|,C(n)\varepsilon/t\}
 \qquad(t>0).
\end{align}

Since $h\le k$ in $A_5$, one has $(h-b)_+\le(k-b)_+$ on $U_1$. The $W^{1,1}$-Sobolev inequality  together with
$\|k\|_{W^{1,1}(A_5)}+|b|\le C(n)$ gives
\[
 \|(h-b)_+\|_{L^{n/(n-1)}(U_1)}\le C(n).
\]
From Chebyshev's inequality and \eqref{add4} we deduce that
\begin{equation}\label{eq:primitive-three-bounds}
 |\{x\in U_1:(h(x)-b)_+>t\}|
 \le\min\{|U_1|,C(n)\varepsilon/t,C(n)t^{-n/(n-1)}\}.
\end{equation}
By the layer-cake formula we get
\[
 \begin{aligned}
 \int_{U_1}(h-b)_+
 &=\int_0^\infty |\{x\in U_1:(h(x)-b)_+>t\}|\,dt\\
 &\le |U_1|\varepsilon
   +C(n)\varepsilon\int_\varepsilon^{\varepsilon^{-(n-1)}}\frac{dt}{t}
   +C(n)\int_{\varepsilon^{-(n-1)}}^\infty t^{-n/(n-1)}\,dt\\
 &\le C(n)\varepsilon(1+|\log\varepsilon|)
 \le C(n)\sqrt\varepsilon.
 \end{aligned}
\]

Finally, since $\int_{U_1}\zeta(h-b)=0$ and $\zeta\ge c(n)$ on $A_4$, we see that
\[
 \begin{aligned}
 \int_{A_4}(h-b)_-
 &\le C(n)\int_{U_1}\zeta(h-b)_-
 =C(n)\int_{U_1}\zeta(h-b)_+\\
 &\le C(n)\|\zeta\|_{L^\infty(U_1)}
       \int_{U_1}(h-b)_+
 \le C(n)\sqrt\varepsilon.
 \end{aligned}
\]
Combining the above two estimates proves \eqref{eq:primitive}.
\end{proof}

Proposition~\ref{prop:rigidity} is the main technical ingredient in establishing the energy decay estimate \eqref{eq:energy-decay} for $6\le n\le9$. Roughly speaking, it shows that if the gradient energy of a stable solution $u$ is bounded on an outer annulus, while the radial derivative $x\cdot Du$ and the associated stability quadratic form are sufficiently small, then the gradient energy on an inner annulus must also be small.

Since the proof is rather involved, we briefly outline its main steps. First, the first identity in \eqref{eq:reaction-div}, together with Lemma~\ref{lem:primitive}, shows that $F(u)$ is close to a constant $b$ in $L^1$, where $F'=f$. This leads to \eqref{eq:small-set-energy}, which provides a bound for the gradient energy over sets of small measure. Consequently, by choosing $M$ sufficiently large, we can reduce the problem to estimating the energy over $\{|u-c|\le M\}$, where $c$ is the average of $u$ over a suitable annulus. Using two cutoff functions, one in the spatial variable and the other in the range of $u-c$, we further reduce the problem to estimating an integral of the form $\int Du\cdot Dg$, where $g$ satisfies \eqref{eq:bounded-test}; see \eqref{add5.5}.

To estimate this integral, we proceed in three steps. First, we consider the vector field $V=|Du|Du$. The second identity in \eqref{eq:reaction-div} yields the decomposition \eqref{eq:V-equation} for $DV$. Using this decomposition, the weak $(1,1)$ estimate for singular integrals, and the preceding energy bounds, we establish the $L^1$ translation estimate \eqref{eq:V-translation}. In particular, this allows us to approximate $V$ in $L^1$ by its mollification $V_\sigma$.

Next, we construct a smooth map $H:\RR^n\to\RR^n$ that approximates the inverse of the map $p\mapsto|p|p$. More precisely, $H(V)$ approximates $Du$ pointwise; see \eqref{eq:H-composition}. Combining this with the estimates for $V_\sigma-V$, we show that $H(V_\sigma)$ approximates $Du$ in $L^2$; see \eqref{eq:inverse-approximation}. Hence, the integral $\int Du\cdot Dg$ can be approximated by
\[
 \int H(V_\sigma)\cdot Dg
 =-\int g\,\operatorname{div}H(V_\sigma).
\]

Finally, we expand $\operatorname{div}H(V_\sigma)$ as in \eqref{eq:approx-divergence}. The principal term is small by \eqref{eq:trace-small}, while the remaining terms are controlled by the commutator estimate \eqref{eq:commutator}. Combining these estimates and choosing the parameters in a suitable order, we obtain the desired smallness of the gradient energy on the inner annulus, completing the proof.

\begin{proposition}\label{prop:rigidity}
Fix $6\le n\le9$, let $A_j=B_{2^j}\setminus\ol{B_{2^{-j}}}$, and fix a radial function $\eta\in C_c^\infty(A_6)$ equal to one near $\ol{A_5}$. Then for every $\varepsilon_*\in(0,1)$ there exists $0<\delta_*=\delta_*(n,\varepsilon_*)<1$ such that every classical stable solution $u$ of \eqref{eq:pde} in $A_6$ with
\begin{equation*}
 \int_{A_6}|Du|^2\le1,\qquad
 \|x\cdot Du\|_{L^2(A_6)}+Q_u((x\cdot Du)\eta)^{1/2}\le\delta_*
\end{equation*}
satisfies $\int_{A_1}|Du|^2\le\varepsilon_*$.
\end{proposition}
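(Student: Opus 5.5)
We follow the outline given just before the statement; the proof is by direct quantitative estimates, with parameters chosen at the end.

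\textbf{Step 1: $F(u)$ is nearly constant.} Put $z=x\cdot Du$ and $q=Q_u(z\eta)\le\delta_*^2$. Lemma~\ref{lem:reaction}, applied with $\Omega_1$ a neighborhood of $\ol{A_5}$ on which $\eta=1$, gives $D(F(u))=f(u)Du=\Div W$ with $\|W\|_{L^1}\le C\delta_*$. We want Lemma~\ref{lem:primitive} with $h=F(u)$, and the natural upper bound comes from the identity $F(u)=K-|Du|^2/2$. By \eqref{eq:stress}, $|DK|\le C|Du|\mathcal A_u$, and by \eqref{eq:geometric}, $\int\mathcal A_u^2\le C$ on the relevant annuli. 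Hence $K$ minus a constant lies in $W^{1,1}$ with norm $\le C$. Subtracting that constant from both $h$ and $k$, we may take $h=F(u)-\kappa\le k=K-\kappa$.

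To control $\|h\|_{L^1}$, integrate $\Div(p\otimes p-w^2I+KI)=0$ against $x\chi$ for a radial cutoff $\chi$, exactly as in Proposition~\ref{prop:annulus}. This bounds the averages of $K$, hence of $F(u)$, up to $C$. Lemma~\ref{lem:primitive} then yields $b$ with $\|F(u)-b\|_{L^1(A_4)}\le C\delta_*^{1/2}$.

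\textbf{Step 2: small-set energy bound.} Choose $c$, the mean of $u$ on $A_4$. Poincar\'e's inequality gives $|\{|u-c|>M\}\cap A_4|\le C/M^2$. To control energy on small sets, use higher integrability: Lemma~\ref{lem:endpoint} gives $Du\in L^{2n/(n-1)}$ on $A_3$, provided $\|f^-(u)\|_{L^1}$ is controlled. I would bound it through $|\Delta u|=-\Delta u+2f^-(u)$ together with $\int f(u)\psi\approx$ small. Alternatively, I would use the identity $\Div((K-b)x)$ combined with $\|F(u)-b\|_{L^1}$ small, which converts energy on a set $E$ into a bound of the form $\int_E|Du|^2\le C|E|^{\theta}+o(1)$. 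This yields $\int_{A_3\cap\{|u-c|>M\}}|Du|^2\le\omega(M)+C(M)\delta_*^{1/2}$ with $\omega(M)\to0$.

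\textbf{Step 3: reduction on the bounded range.} Let $\phi$ be a spatial cutoff and $T_M$ a truncation of $u-c$ to $[-M,M]$. It then suffices to show that $\int Du\cdot Dg$ is small for $g=\phi^2T_M(u-c)$, where $\|g\|_\infty\le M$ and $\|Dg\|_{L^2}\le C(M)$. Smallness of $\|z\|_{L^2}$ means the radial derivative is negligible, so $Du$ is nearly tangential.

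Set $V=|Du|Du$. Since $\partial_kV_i=\partial_k(|Du|\partial_iu)$, we can use $\Delta u=-f(u)$ and the second identity in \eqref{eq:reaction-div} to write
\[
DV=\Div W'+G,
\]
where $\|W'\|_{L^1}\le C\delta_*^{1/2}$ and $G$ is bounded in $L^1$ through $|Du|\mathcal A_u$. Via the weak $(1,1)$ estimate, this gives $\|V(\cdot+h)-V\|_{L^1}\le\omega(|h|)+C\delta_*^{1/2}$.

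Next, mollify to get $V_\sigma$, and let $H$ be a smooth regularized inverse of $p\mapsto|p|p$, so that $|H(V)-Du|$ is small except where $|Du|$ is tiny. Then $\int Du\cdot Dg\approx-\int g\,\Div H(V_\sigma)$. The principal part of $\Div H(V_\sigma)$ is $\tr(DH\cdot DV_\sigma)$. Its main contribution reduces to $\Delta u=-f(u)$ weighted by $|Du|^{-1}$, which is handled by the $W_{ik}$ representation, plus a trace of $B$-type terms, which is controlled by $\tr B=0$. The remainder is a commutator between mollification and the nonlinear map $H$, which the translation estimate controls.

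\textbf{Main obstacle and order of parameters.} The commutator and divergence analysis in Step 3 is the hard step: it must be shown small without any compactness argument. Parameters are chosen in the order $M$, then the regularization of $H$, then $\sigma$, then $\delta_*$.
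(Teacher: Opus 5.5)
Your outline has the same architecture as the paper's proof: $F(u)$ is shown nearly constant via Lemmas~\ref{lem:reaction} and~\ref{lem:primitive}, followed by a small-set energy bound, the truncated test function $g$, the field $V=|Du|Du$, a translation estimate, mollification, a regularized inverse $H$, and a divergence splitting. However, the step that actually makes $\int Du\cdot Dg$ small is not justified.

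\textbf{The principal term.} If the regularized inverse satisfies $H(\mathcal P(p))=h(|p|)p$, then $DH(\mathcal P(p))\,D\mathcal P(p)=h(w)I+w^{-1}h'(w)\,p\otimes p$ with $w=|Du|$. So the principal term is
\[
\tr\bigl(DH(V)\,D\mathcal P(p)\,B\bigr)=h(w)\tr B+w\,h'(w)\,B[e,e],\qquad e=Du/w .
\]
The identity $\tr B=0$ removes only the first piece. The second piece lives where the regularization switches on and has size $|th'(t)|_{t=w}\,|B|$.
\begin{itemize}
\item For a cutoff at a single scale $b_0$, this term is comparable to $|B|$ on $\{w\sim b_0\}$. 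Nothing in the hypotheses makes $\|B\|_{L^2(\{w\sim b_0\})}$ small uniformly in $u$.
\item Uniformity is essential. The scale $b_0$ must be fixed before $\sigma$ and $\delta_*$, because the commutator error and the $(\Div E)_\sigma$ error carry constants that blow up as $b_0\to0$ (for example $\|DH\|_\infty\sim b_0^{-1}$).
\item The missing idea is to spread the switch over $L$ logarithmic scales: take $h(t)=\Theta(\log(t/b_0)/L)$, with $\Theta$ a fixed smooth step and $b_0=\tau e^{-L}$.
\item Then $|th'(t)|\le C/L$, so the residual term is $O(1/L)$ in $L^2$ because $\|B\|_{L^2}\le C$. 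You still have $|h(w)Du-Du|\le\tau$.
\item A pigeonhole choice of one dyadic scale among $L$ candidates would also work.
\end{itemize}
Your parameter order must then be $M$, $L$, $\tau$, then $\sigma$, then $\delta_*$.

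\textbf{Secondary points.}
\begin{itemize}
\item In Step~2, drop the first route. The identity $|\Delta u|=-\Delta u+2f^-(u)$ bounds $|\Delta u|$ by $f^-(u)$, not the other way round. Lemma~\ref{lem:reaction} controls $f(u)$ only against degree-one homogeneous functions of $Du$, not $\int f(u)\psi$, since $Q_u(\psi)$ is unbounded when $f'_-(u)$ is very negative. So $\|f^-(u)\|_{L^1}$, and with it Lemma~\ref{lem:endpoint}, is unavailable.
\item What works is the pointwise identity, after normalizing $K$ to have mean zero,
\[
|Du|^2=2(K-b)-2(F(u)-b)\le k+e_0 .
\]
Here $k=2|K-b|$ is bounded in $L^{n/(n-1)}$ and $\|e_0\|_{L^1(A_4)}\le C\delta_*^{1/2}$. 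No divergence identity for $(K-b)x$ is needed.
\item Likewise, the stress-tensor detour in Step~1 is unnecessary, since $\|F(u)-\kappa\|_{L^1}\le\|K-\kappa\|_{L^1}+\tfrac12\|Du\|_{L^2}^2$.
\item This majorant $k+e_0$ is also what you need in two other places. It upgrades the weak-$(1,1)$ bound on $V(\cdot+a)-V$ to a strong $L^1$ translation estimate, and it controls the commutator term. The weak-type estimate alone gives neither.
\item Finally, take the spatial cutoff $\phi$ radial. Then the cross term $2\int\phi\,T_M(u-c)\,Du\cdot D\phi$ is bounded by $C\|u-c\|_{L^2}\|x\cdot Du\|_{L^2}$. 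This is the only place where the smallness of $x\cdot Du$ enters.
\end{itemize}
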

\begin{proof}
For simplicity, we write $p=Du$, $w=|p|$, $z=x\cdot Du$, $q=Q_u(z\eta)$, $d=\|z\|_{L^2(A_6)}$ and $B=D^2u+f(u)\frac{p\otimes p}{w^2}$. By \eqref{eq:matrix} and \eqref{eq:geometric}, we have $\|B\|_{L^2(A_5)}\le C(n)$. Take $F$ such that $F'=f$ and $K:=w^2/2+F(u)$ has mean zero on $A_5$. Then from \eqref{eq:stress} and Poincar\'e's inequality we get
\[
 \|K\|_{W^{1,1}(A_5)}+\|F(u)\|_{L^1(A_5)}\le C(n).
\]
The first assertion in \eqref{eq:reaction-div} gives
$D(F(u))=\Div E^\pi$ in $\mathcal D'(A_5)$, with $\|E^\pi\|_{L^1(A_5)}\le C\sqrt q$. Since $F(u)\le K$, from Lemma~\ref{lem:primitive} we can find $b\in\RR$ such that $|b|\le C$ and $\|F(u)-b\|_{L^1(A_4)}\le Cq^{1/4}$. Set
\[
 k=2|K-b|,\qquad e_0=2|F(u)-b|.
\]
Then
\begin{equation}\label{eq:energy-majorant}
 w^2\le k+e_0,\qquad
 \|k\|_{L^{n/(n-1)}(A_4)}\le C,\qquad
 \|e_0\|_{L^1(A_4)}\le Cq^{1/4}.
\end{equation}
In particular, for any measurable set $E\subseteq A_4$, we have by H\"older's inequality that
\begin{equation}\label{eq:small-set-energy}
 \int_E w^2\le C|E|^{1/n}+Cq^{1/4}.
\end{equation}

Next we introduce the map
\[
 \mathcal P:\RR^n\to\RR^n,\qquad \mathcal P(\xi)=|\xi|\xi,
\]
and define the vector field
\[
 V(x):=\mathcal P(p(x))=|Du(x)|Du(x),\qquad x\in A_5.
\]
The map $\mathcal P$ is $C^{1,1}$, $D\mathcal P$ is globally Lipschitz, and
\[
 D\mathcal P(\xi)=|\xi|I+\frac{\xi\otimes\xi}{|\xi|}\quad(\xi\ne0),
 \qquad D\mathcal P(0)=0.
\]
Let $W_{ij}$ be the vector field supplied by Lemma~\ref{lem:reaction} and set $E=(E_{ijk})$ by
$E_{ijk}=-2(W_{ij})_k$; then by the chain rule and the second assertion of \eqref{eq:reaction-div} we obtain
\[
 \partial_jV_i
 =[D\mathcal P(p)B]_{ij}-2f(u)\frac{p_ip_j}{w}=[D\mathcal P(p)B]_{ij}+\sum_{k=1}^n\partial_kE_{ijk}
\]
in $\mathcal D'(A_5)$. Therefore, we have
\begin{equation}\label{eq:V-equation}
 DV=D\mathcal P(p)B+\Div E,\qquad
 \|D\mathcal P(p)B\|_{L^1(A_5)}\le C,\qquad \|E\|_{L^1(A_5)}\le C\sqrt q.
\end{equation}

Fix an open set $U$ with $\ol{A_2}\Subset U\Subset A_3$, and choose $s_0>0$ small such that
$A_2+B_{s_0}\Subset U$ and $U+B_{s_0}\Subset A_3$. We claim that
\begin{equation}\label{eq:V-translation}
 \int_U|V(x+a)-V(x)|\,dx
 \le\Omega(s,q):=C(s^{1/4}+q^{1/4})
 \qquad(\forall\;|a|\le s<s_0).
\end{equation}
To prove it, we choose $\xi\in C_c^\infty(A_4)$ equal to one on a neighborhood of $\ol{A_3}$. For a fixed component $i$, set $g=\xi V_i$ in $A_4$ and extend it by zero to $\RR^n$. Write
$A=D\mathcal P(p)B$. Then \eqref{eq:V-equation} means
\[
 \partial_jV_i=A_{ij}+\sum_{k=1}^n\partial_kE_{ijk}
 \qquad\hbox{in }\mathcal D'(A_4).
\]
For each $j$, one can compute
\[
 \begin{aligned}
 \partial_jg
 =\xi\,\partial_jV_i+V_i\partial_j\xi
 =\xi A_{ij}+V_i\partial_j\xi
   -\sum_{k=1}^nE_{ijk}\partial_k\xi
   +\sum_{k=1}^n\partial_k(\xi E_{ijk}).
 \end{aligned}
\]
Thus, with
\[
 \mathcal E_{jk}:=\xi E_{ijk},\qquad
 \lambda_j:=\xi A_{ij}+V_i\partial_j\xi
             -\sum_{k=1}^nE_{ijk}\partial_k\xi,
\]
we have
\begin{equation}\label{eq:g-decomposition}
 Dg=\lambda+\Div\mathcal E
 \qquad\hbox{in }\mathcal D'(\RR^n),
\end{equation}
and
\[
 \|\lambda\|_{L^1(\RR^n)}\le C,
 \qquad
 \|\mathcal E\|_{L^1(\RR^n)}\le C\sqrt q.
\]

Taking one more divergence in \eqref{eq:g-decomposition} gives
\[
 \Delta g=\operatorname{div}\lambda
 +\sum_{j,k=1}^n\partial_j\partial_k\mathcal E_{jk}
 \qquad\hbox{in }\mathcal D'(\RR^n).
\]
Hence, if we set
\[
 G:=\sum_j\partial_j\Gamma*\lambda_j,
 \qquad
 T:=\sum_{j,k}\partial_j\partial_k\Gamma*\mathcal E_{jk},
\]
where $\Gamma$ is the fundamental solution,
then $g-G-T$ is harmonic on all of $\RR^n$. Since $g$, $\lambda$, and $\mathcal E$ are compactly supported, we must have
\begin{equation}\label{eq:g-newton}
 g=G+T\qquad\hbox{in }\mathcal D'(\RR^n).
\end{equation}
Since $G\in L^1_{\loc}(\RR^n)$ and $g\in L^1(\RR^n)$, this identity shows that $T=g-G\in L^1_{\loc}(\RR^n)$.

We first treat $G$. Let $0<|a|<s_0$ and let $R>1$ be fixed so that $x-y\in B_R$ whenever $x\in U+B_{s_0}$ and $y\in\supp\lambda$. By Fubini's theorem we get
\[
 \begin{aligned}
 \|G(\cdot+a)-G\|_{L^1(U)}
 &\le \|\lambda\|_{L^1(\RR^n)}\int_{B_R}|D\Gamma(z+a)-D\Gamma(z)|\,dz.
 \end{aligned}
\]
Split the last integral into $|z|\le2|a|$ and $2|a|<|z|\le R$. In the first region we use
$|D\Gamma(z)|\le C|z|^{1-n}$, while in the second region we use
$|D^2\Gamma(\zeta)|\le C|\zeta|^{-n}$. It is not hard to compute that
\begin{equation}\label{eq:G-translation}
 \|G(\cdot+a)-G\|_{L^1(U)}
 \le C|a|\left(1+\int_{2|a|}^R\frac{dr}{r}\right)
 \le C|a|\log\frac e{|a|}.
\end{equation}

As for $T$, we can use the Calder\'on--Zygmund weak $(1,1)$ estimate as in Lemma~\ref{lem:primitive} to deduce that, for every $t>0$,
\[
 |\{x\in\RR^n:|T(x)|>t\}|
 \le \frac{C(n)}t\|\mathcal E\|_{L^1(\RR^n)}
 \le \frac{C(n)\sqrt q}{t}.
\]
Thus, for $|a|<s_0$ we have
\begin{equation}\label{eq:T-translation}
 |\{x\in U:|T(x+a)-T(x)|>t\}|
 \le\frac{C(n)\sqrt q}{t}.
\end{equation}
Since
$V(x+a)-V(x)=g(x+a)-g(x)$ for $x\in U$, we can combine Chebyshev's inequality with \eqref{eq:G-translation} and use \eqref{eq:T-translation} to obtain
\begin{equation}\label{eq:X-weak}
 |\{x\in U:|V(x+a)-V(x)|>t\}|
 \le\frac{C(n)(s\log(e/s)+\sqrt q)}t,
 \qquad |a|\le s<s_0.
\end{equation}

To pass from \eqref{eq:X-weak} to \eqref{eq:V-translation}, we set, for $x\in U$,
\[
 X(x)=|V(x+a)-V(x)|,\qquad
 k_a(x)=k(x+a)+k(x),\qquad
 e_a(x)=e_0(x+a)+e_0(x),
\]
and put $Y=(X-e_a)_+$. Since $|V|=w^2$, \eqref{eq:energy-majorant} gives
$X\le k_a+e_a$; hence $0\le Y\le k_a$ and $X\le Y+e_a$. Let
\[
 \Lambda=C\bigl(s\log(e/s)+\sqrt q\bigr).
\]
Note that there are three bounds for the distribution function of $Y$. First, clearly we have
$|\{x\in U:Y(x)>t\}|\le|U|$. Second, as $Y\le X$, from \eqref{eq:X-weak} we have 
$|\{x\in U:Y(x)>t\}|\le\Lambda/t$. Third, from $Y\le k_a$ and \eqref{eq:energy-majorant}, we have
\[
  \|Y\|_{L^{n/(n-1)}(U)}\leq\|k_a\|_{L^{n/(n-1)}(U)}
 \le \|k\|_{L^{n/(n-1)}(U+a)}+\|k\|_{L^{n/(n-1)}(U)}\le C,
\]
which, combined with Chebyshev's inequality, gives $|\{x\in U:Y(x)>t\}|\le Ct^{-n/(n-1)}$. Therefore, we conclude that
\begin{equation}\label{eq:Y-three-bounds}
 |\{x\in U:Y(x)>t\}|
 \le\min\{|U|,\Lambda/t,Ct^{-n/(n-1)}\}.
\end{equation}
Similar to the treatment of \eqref{eq:primitive-three-bounds} in Lemma~\ref{lem:primitive}, we can obtain that
\[
 \int_UY\le C\Lambda(1+|\log\Lambda|)\le C\sqrt\Lambda.
\]
Combining this estimate and \eqref{eq:energy-majorant} yields
\[
 \int_U X\le C\sqrt\Lambda+Cq^{1/4}
 \le C(s^{1/4}+q^{1/4}),
\]
which is exactly \eqref{eq:V-translation}.

Let $\varphi\in C_c^\infty(B_1)$ be fixed, nonnegative, with $\int_{\RR^n}\varphi=1$, and set $\varphi_\sigma(x)=\sigma^{-n}\varphi(x/\sigma)$. For $0<\sigma<s_0/4$ we define
$V_\sigma(x)=\int_{B_\sigma}\varphi_\sigma(a)V(x-a)\,da$ for $x\in A_2$. Since $A_2+B_{s_0}\Subset U\Subset A_3$, from \eqref{eq:V-translation} we can get
\begin{align}
 \|V_\sigma-V\|_{L^1(A_2)}&\le\Omega(\sigma,q),\label{eq:V-mollified}\\
 \int_{A_2}\int_{B_\sigma}\varphi_\sigma(a)|V_\sigma(x)-V(x-a)|\,da\,dx
 &\le\Omega(2\sigma,q).\label{eq:V-double}
\end{align}

Next we choose a fixed smooth nondecreasing function $\Theta$ equal to zero on $(-\infty,0]$ and to one on $[1,\infty)$. For the moment, let $0<\tau<1$, $L\ge1$, and set
\[
 b_0=\tau e^{-L},\qquad
 h(t)=\Theta\left(\frac{\log(t/b_0)}L\right)\ (t>0),\quad h(0)=0,
\]
\[
 H(Z)=h(\sqrt{|Z|})\frac Z{\sqrt{|Z|}}\quad(Z\ne0),\qquad H(0)=0.
\]
Note that $H$ is zero for $|Z|\le b_0^2$. To compute its derivative bounds explicitly, we write
$r=|Z|$, $t=\sqrt r$, and
\[
 H(Z)=a(r)Z,\qquad a(r)=\frac{h(t)}t.
\]
In the case $b_0<t$, we have
\[
 a'(r)=\frac{th'(t)-h(t)}{2t^3},\qquad
 a''(r)=\frac{t^2h''(t)-3th'(t)+3h(t)}{4t^5}.
\]
Since $0\le h\le1$ and $|th'(t)|+|t^2h''(t)|\le C/L$, direct differentiation of
$a(|Z|)Z$ gives
\[
 |DH(Z)|\le C\bigl(|a(r)|+r|a'(r)|\bigr)\le Ct^{-1},
 \qquad
 |D^2H(Z)|\le C\bigl(|a'(r)|+r|a''(r)|\bigr)\le Ct^{-3}.
\]
Since the derivatives of $H$ vanish identically for $t<b_0$, we know that
\begin{equation}\label{eq:H-derivatives}
 \|DH\|_{L^\infty(\RR^n)}\le Cb_0^{-1},\qquad
 \|D^2H\|_{L^\infty(\RR^n)}\le Cb_0^{-3}.
\end{equation}

We consider the composition of $H$ with $\mathcal P$. On $\{w>0\}$,
$|\mathcal P(p)|=w^2$ and therefore
\[
 H(\mathcal P(p))=h(w)p.
\]
Consequently, we have
\begin{align}\label{eq:H-composition}
    |H(\mathcal P(p))-p|=(1-h(w))w\le\tau.
\end{align}
Moreover, direct computation yields
\begin{equation*}
 DH(\mathcal P(p))D\mathcal P(p)=h(w)I+w^{-1}h'(w)p\otimes p.
\end{equation*}
Thus $\tr B=0$ (see \eqref{eq:matrix}), $\|B\|_{L^2(A_5)}\le C(n)$, and the estimate $|th'(t)|\le C/L$ give
\begin{equation}\label{eq:trace-small}
 S:=\tr(DH(V)D\mathcal P(p)B)=w^{-1}h'(w)B[p,p],\qquad
 \|S\|_{L^2(A_3)}\le C/L.
\end{equation}
Note that \eqref{eq:H-composition} and \eqref{eq:trace-small} also hold on $\{w=0\}$.

For any $\xi,\Tilde{\xi}\in\RR^n$, we have the following elementary inequality
\[
 |\xi-\widetilde\xi|^2
 \le2|\mathcal P(\xi)-\mathcal P(\widetilde\xi)|.
\]
Set
$F_0(\xi)=h(|\xi|)\xi$. Since
\[
 DF_0(\xi)=h(|\xi|)I+|\xi|h'(|\xi|)\frac{\xi}{|\xi|}\otimes\frac{\xi}{|\xi|}
 \qquad(\xi\ne0),
\]
and $|th'(t)|\le C/L$, the map $F_0$ is globally Lipschitz with a constant independent of $b_0$. Moreover,
since $\mathcal P$ is onto, we can deduce that
\[
 \begin{aligned}
 |H(Z)-H(\widetilde Z)|^2
 &\le C|\xi-\widetilde\xi|^2
 \le C|\mathcal P(\xi)-\mathcal P(\widetilde\xi)|
 =C|Z-\widetilde Z|,
 \end{aligned}
\]
where $Z=\mathcal P(\xi)$ and $\widetilde Z=\mathcal P(\widetilde\xi)$. With $v_\sigma=H(V_\sigma)$, we can apply the above estimate with $Z=V_\sigma$, $\widetilde Z=V$, and the estimates \eqref{eq:V-mollified} and \eqref{eq:H-composition} to derive that
\begin{equation}\label{eq:inverse-approximation}
 \|p-v_\sigma\|_{L^2(A_2)}\le C\bigl(\tau+\Omega(\sigma,q)^{1/2}\bigr).
\end{equation}

Put
\[
 A:=D\mathcal P(p)B,
 \qquad
 A_\sigma(x)=\int_{B_\sigma}\varphi_\sigma(a)A(x-a)\,da.
\]
Convolving \eqref{eq:V-equation} gives
\[
 DV_\sigma=A_\sigma+(\Div E)_\sigma
 \qquad\hbox{on }A_2.
\]
Since $v_\sigma=H(V_\sigma)$ is $C^1$, direct computation yields
\[
 \begin{aligned}
 \operatorname{div}v_\sigma(x)
 &=\tr\bigl(DH(V_\sigma(x))A_\sigma(x)\bigr)
   +\tr\bigl(DH(V_\sigma(x))(\Div E)_\sigma(x)\bigr)\\
 &=\int_{B_\sigma}\varphi_\sigma(a)
   \tr\bigl(DH(V(x-a))A(x-a)\bigr)\,da\\
 &\quad+\int_{B_\sigma}\varphi_\sigma(a)
   \tr\Bigl((DH(V_\sigma(x))-DH(V(x-a)))A(x-a)\Bigr)\,da\\
 &\quad+\tr\bigl(DH(V_\sigma(x))(\Div E)_\sigma(x)\bigr).
 \end{aligned}
\]
Write
\begin{equation}\label{eq:approx-divergence}
 \operatorname{div}v_\sigma=S_\sigma+R_\sigma^B+R_\sigma^E
 =S_\sigma+R_\sigma,
\end{equation}
where, for $x\in A_2$,
\[
 \begin{aligned}
 S_\sigma(x)
 &:=\int_{B_\sigma}\varphi_\sigma(a)S(x-a)\,da,\\
 R_\sigma^B(x)
 &:=\int_{B_\sigma}\varphi_\sigma(a)
 \tr\Bigl((DH(V_\sigma(x))-DH(V(x-a)))
 D\mathcal P(p(x-a))B(x-a)\Bigr)\,da,\\
 R_\sigma^E(x)
 &:=\tr\bigl(DH(V_\sigma(x))(\Div E)_\sigma(x)\bigr),
 \qquad R_\sigma:=R_\sigma^B+R_\sigma^E.
 \end{aligned}
\]
For the rest of the proof we set
\begin{equation}\label{eq:CH-def}
 C_H:=C(n)\left(1+\|DH\|_{L^\infty(\RR^n)}^2
 +\|DH\|_{L^\infty(\RR^n)}\|D^2H\|_{L^\infty(\RR^n)}\right).
\end{equation}
By \eqref{eq:H-derivatives} and $0<b_0<1$, we know $ C_H\le C(n)b_0^{-4}$.

Note that componentwise,
\[
 [ (\Div E)_\sigma]_{ij}(x)
 =\sum_{k=1}^n\int_{B_\sigma}E_{ijk}(x-a)\,\partial_k\varphi_\sigma(a)\,da;
\]
thus from \eqref{eq:V-equation} we get
\begin{equation}\label{eq:DivE-mollified}
 \|(\Div E)_\sigma\|_{L^1(A_2)}
 \le \|E\|_{L^1(U)}\|D\varphi_\sigma\|_{L^1(\RR^n)}
 \le C\frac{\sqrt q}{\sigma}.
\end{equation}
From \eqref{eq:DivE-mollified} and the definition of $R_\sigma^E$, we obtain
\begin{equation}\label{eq:RE-bound}
 \|R_\sigma^E\|_{L^1(A_2)}
 \le C\|DH\|_{L^\infty}\|(\Div E)_\sigma\|_{L^1(A_2)}
 \le C_H\frac{\sqrt q}{\sigma}.
\end{equation}

As for the term $R_\sigma^B$, we consider the measure
$d\mu=dx\,\varphi_\sigma(a)\,da$ on $A_2\times B_\sigma$ and write $y=x-a$. Set
\[
 G(x,a):=|DH(V_\sigma(x))-DH(V(y))|^2.
\]
Clearly $G\leq C_H$. For any $Z,\widetilde Z\in\RR^n$, we have
\[
 |DH(Z)-DH(\widetilde Z)|^2
 \le C_H|Z-\widetilde Z|,
\]
which, together with \eqref{eq:V-double}, implies
\begin{equation}\label{eq:G-bounds}
 \int G\,d\mu\le C_H\Omega(2\sigma,q).
\end{equation}
Since $w^2\le k+e_0$, H\"older's inequality, \eqref{eq:energy-majorant} and \eqref{eq:G-bounds} give
\[
 \begin{aligned}
 \int Gw(y)^2\,d\mu
 &\le \int Gk(y)\,d\mu+\int Ge_0(y)\,d\mu\\
 &\le C\|G\|_{L^n(d\mu)}+Cq^{1/4}\|G\|_{L^\infty(d\mu)}\\
 &\le C_H\bigl(\Omega(2\sigma,q)^{1/n}+q^{1/4}\bigr).
 \end{aligned}
\]
Finally, we can combine this estimate with H\"older's inequality to get
\[
 \begin{aligned}
 \|R_\sigma^B\|_{L^1(A_2)}
 &\le C\int_{A_2\times B_\sigma}G^{1/2}w(y)|B(y)|\,d\mu\\
 &\le C\left(\int Gw(y)^2\,d\mu\right)^{1/2}
       \left(\int|B(y)|^2\,d\mu\right)^{1/2}\\
 &\le C_H\bigl(\Omega(2\sigma,q)^{1/(2n)}+q^{1/8}\bigr),
 \end{aligned}
\]
which, together with \eqref{eq:RE-bound}, gives
\begin{equation}\label{eq:commutator}
 \|R_\sigma\|_{L^1(A_2)}
 \le C_H\left(\Omega(2\sigma,q)^{1/(2n)}+q^{1/8}+\frac{\sqrt q}{\sigma}\right).
\end{equation}

Let $c=u_{A_3}$. From Poincar\'e's inequality we see $\|u-c\|_{L^2(A_3)}\le C$. Fix a radial function $\theta\in C_c^\infty(A_2)$, equal to one on $A_1$, with $0\le\theta\le1$. Choose a smooth even function $\zeta$ which is one on $[-1,1]$, zero outside $(-2,2)$, and satisfies $0\leq \zeta\leq1$. For $M\ge1$, set
\[
 T_M(s)=\int_0^s\zeta(t/M)\,dt,\qquad g=\theta T_M(u-c).
\]
Since $0\le\zeta\le1$, we have
\[
 |T_M(s)|\le\min\{|s|,2M\},\qquad |T_M'(s)|=|\zeta(s/M)|\le1.
\]
Since
\begin{align}\label{add5}
    Dg=T_M(u-c)D\theta+\theta T_M'(u-c)p,
\end{align}
it follows that
\begin{equation}\label{eq:bounded-test}
 \|g\|_{L^2(A_2)}+\|Dg\|_{L^2(A_2)}\le C,
 \qquad \|g\|_{L^\infty(A_2)}\le2M.
\end{equation}

Consider the set
\[
 E_M:=A_2\cap\{|u-c|>M\}.
\]
By Chebyshev's inequality and the estimate $\|u-c\|_{L^2(A_3)}\le C$, we have
\[
 |E_M|\le M^{-2}\|u-c\|_{L^2(A_2)}^2\le CM^{-2}.
\]
Applying \eqref{eq:small-set-energy} with $E=E_M$ then yields
\begin{align}
    \int_{E_M}w^2
 \le C|E_M|^{1/n}+Cq^{1/4}
 \le CM^{-2/n}+Cq^{1/4}.\nonumber
\end{align}
As $T'_M=1$ on $[-M,M]$ and $T'_M\ge0$,
\begin{align}\label{add6}
    \int_{A_1}w^2
 \le\int_{A_2}\theta T'_M(u-c)w^2+CM^{-2/n}+Cq^{1/4}.
\end{align}
From \eqref{add5}, the first integral on the right can be written as
\begin{align}\label{add5.5}
    \int_{A_2}\theta T'_M(u-c)w^2
 =\int_{A_2}p\cdot Dg
   -\int_{A_2}T_M(u-c)p\cdot D\theta.
\end{align}
As $\theta$ is radial, $p\cdot D\theta=\theta'(|x|)z/|x|$. Thus we have by H\"older's inequality
\begin{align}\label{add7}
 \left|\int_{A_2}T_M(u-c)p\cdot D\theta\right|
 &\le C\int_{A_2}|u-c||z|\nonumber\\
 &\le C\|u-c\|_{L^2(A_2)}\|z\|_{L^2(A_2)}\le Cd.
\end{align}

To estimate the integral of $p\cdot Dg$, we replace $p$ by $v_\sigma$. Specifically, by \eqref{eq:inverse-approximation} and \eqref{eq:bounded-test}, we have
 \begin{align}\label{add8}
 \left|\int_{A_2}(p-v_\sigma)\cdot Dg\right|
 &\le \|p-v_\sigma\|_{L^2(A_2)}\|Dg\|_{L^2(A_2)}\nonumber\\
 &\le C\bigl(\tau+\Omega(\sigma,q)^{1/2}\bigr).
 \end{align}
Since $g$ is compactly supported in $A_2$, integration by parts and \eqref{eq:approx-divergence} give
\[
 \begin{aligned}
 \int_{A_2}v_\sigma\cdot Dg
 &=-\int_{A_2}g\,\operatorname{div}v_\sigma\\
 &=-\int_{A_2}gS_\sigma-\int_{A_2}gR_\sigma.
 \end{aligned}
\]
From \eqref{eq:trace-small} 
and \eqref{eq:bounded-test} we can obtain
\begin{align}\label{add9}
    \left|\int_{A_2}gS_\sigma\right|
    \le \|g\|_{L^{2}(A_2)}\|S_\sigma\|_{L^2(A_2)}
    \le C\|S\|_{L^2(A_3)}
    \le C/L.
\end{align}
Besides, \eqref{eq:bounded-test} and \eqref{eq:commutator} give
\begin{align}\label{add10}
    \left|\int_{A_2}gR_\sigma\right|
 \le2MC_H\left(\Omega(2\sigma,q)^{1/(2n)}+q^{1/8}+\frac{\sqrt q}{\sigma}\right).
\end{align}

Combining the estimates \eqref{add6}-\eqref{add10}, and recalling
$\Omega(s,q)=C(s^{1/4}+q^{1/4})$, we obtain
\[
 \begin{aligned}
 \int_{A_1}w^2
 \le C(n)\bigl(M^{-2/n}+\tau+L^{-1}+d\bigr)
 +MC_H\left(\sigma^{1/(8n)}+q^{1/(8n)}+\frac{\sqrt q}{\sigma}\right).
 \end{aligned}
\]
To complete the proof, we just need to first take $M,L$ large, $\tau,d$ small, then take $\sigma$ small, and finally take $q$ small. 
\end{proof}

We next establish an energy decay estimate for stable solutions in every dimension $n\ge2$, which plays a key role in the proofs of Theorems~\ref{thm:gradient} and~\ref{thm:holder}. The cases $2\le n\le5$ and $n\ge10$ follow from Lemmas~\ref{lem:rotation} and~\ref{lem:radial}, respectively.

For the remaining dimensions $6\le n\le9$, we consider the rescaled functions $v_a(x):=u(ax)$. Applying Lemma~\ref{lem:radial}, we obtain a uniform bound for the integral of $\|x\cdot Dv_a\|_{L^2(A_6)}^2+Q_{v_a}((x\cdot Dv_a)\eta)$ with respect to $da/a$; see \eqref{eq:scale-average}. Averaging over a suitable range of scales then allows us to choose $a$ such that, after normalization, the hypotheses of Proposition~\ref{prop:rigidity} are satisfied. This yields a fixed-scale contraction of the gradient energy, which can be iterated to obtain the desired decay estimate.

\begin{proposition}\label{prop:decay}
Let $u\in C^{2}(B_R(y))$ be a stable solution of \eqref{eq:pde} in $B_R(y)$. With $\beta_n$ defined by \eqref{eq:beta}, it holds that
\begin{equation}\label{eq:energy-decay}
 \int_{B_r(y)}|Du|^2\le C(n)(r/R)^{n-\beta_n}
                    \int_{B_R(y)}|Du|^2\qquad(0<r\le R/2).
\end{equation}
Consequently, for $0<r<R$,
\begin{equation}\label{eq:critical-morrey}
 \|Du\|_{M^{2,\beta_n}(B_r(y))}\le C(n,r,R)\|Du\|_{L^2(B_R(y))}.
\end{equation}
\end{proposition}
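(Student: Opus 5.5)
We split by dimension. For $2\le n\le5$, $n-\beta_n=n-2+2\alpha_n$, so \eqref{eq:energy-decay} is exactly Lemma~\ref{lem:rotation}. For $n=10$, $\beta_n=2-2\cdot0$ is not what \eqref{eq:beta} gives; rather $\beta_{10}=8-6=2$, so $n-\beta_n=n-2$ and \eqref{eq:energy-decay} follows from \eqref{eq:radial-critical}, bounding the annulus integral by the full ball integral. For $n\ge11$ it is \eqref{eq:radial-high}. The Morrey bound \eqref{eq:critical-morrey} then follows from \eqref{eq:energy-decay}. For $x\in B_r(y)$ and $\rho\le (R-r)/2$ we apply \eqref{eq:energy-decay} on $B_{R-r}(x)$. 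For larger $\rho$ the quantity $\rho^{\beta_n-n}\int|Du|^2$ is trivially bounded by $C(n,r,R)\|Du\|_{L^2(B_R(y))}^2$.

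The real work is $6\le n\le9$, with $y=0$ and, by scaling, $R=1$. It suffices to prove a fixed-scale contraction: there are $\lambda=2^{-N}$ and $\theta\in(0,1)$ depending only on $n$ such that
\[
\int_{B_{\lambda\rho}}|Du|^2\le\theta\int_{B_\rho}|Du|^2
\]
for every stable $u$ in $B_\rho$. Iterating this gives \eqref{eq:energy-decay} with $n-\beta_n=n-2+2\alpha_n$, where $\lambda^{n-2+2\alpha_n}=\theta$ defines $\alpha_n$; for small enough $\theta$ we get $\alpha_n>0$. To prove the contraction, set $E=\int_{B_1}|Du|^2$ and $v_a(x)=u(ax)$ for $a\in(0,2^{-7}]$. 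We use \eqref{eq:radial-budget}, together with \eqref{eq:dilation} applied to $z=x\cdot Du$ with a rescaled radial cutoff $\eta$ (so that $Q_u(z\eta)$ is controlled by weighted integrals of $z^2$ and $w^2$ over an annulus), to obtain the averaged bound
\[
\int_{a_1}^{a_0}\Bigl(a^{2-n}\|x\cdot Dv_a\|_{L^2(A_6)}^2+a^{2-n}Q_{v_a}((x\cdot Dv_a)\eta)\Bigr)\frac{da}{a}\le C(n)E,
\]
uniformly in $a_1$. Here $a^{2-n}\int_{A_6}|Dv_a|^2$ is comparable to $a^{2-n}\int_{B_{64a}\setminus B_{a/64}}|Du|^2$, and by \eqref{eq:radial-critical} this is at most $CE$.

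For the $Q$-term we expand \eqref{eq:dilation}. The $z^2|D\eta|^2$ and $z\,p\cdot D\eta^2$ pieces are controlled by $|x|^{2-n}u_r^2$ and energy. The $w^2((2-n)\eta^2-x\cdot D\eta^2)$ piece must be handled with care, since $w^2$ is not integrable in $da/a$. We plan to handle it either through the precise scale-invariant identity from the proof of \cite[Lem.\,2.1]{CabreFigalliRosOtonSerra2020}, which makes this term telescope or have a good sign after integrating in $\log a$ (the $(2-n)$ term against $-x\cdot D\eta^2$), or by bounding it by $C\cdot$(energy) and only using that it is $O(E)$, not small.

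The main obstacle is exactly this point: the hypothesis of Proposition~\ref{prop:rigidity} needs $Q$ small, not merely bounded. So the averaging must show that the small part ($z$-terms) has finite total integral in $da/a$, while the $w^2$-term combines into a quantity controlled by differences of $a^{2-n}\int_{B_a}|Du|^2$ at comparable scales; if the latter is not small, we already have decay. Concretely, consider $2^{-k}$ for $k$ ranging over $N$ consecutive dyadic blocks. Either some block has normalized energy drop by a fixed factor (and we are done), or all are comparable. In the comparable case the averaged bound, divided by $N$, produces a scale $a$ where $\delta$ is at most $C\sqrt{E/N}$ relative to $(a^{2-n}\int_{A_6}|Dv_a|^2)^{1/2}$. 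Normalizing $v_a$ so that $\int_{A_6}|Dv_a|^2=1$, with $N$ chosen large in terms of $\delta_*(n,\varepsilon_*)$, Proposition~\ref{prop:rigidity} gives $\int_{A_1}\le\varepsilon_*$. This contradicts comparability once $\varepsilon_*$ is small relative to the constant in \eqref{eq:radial-critical}, which yields the contraction at a scale $\ge 2^{-CN}$.
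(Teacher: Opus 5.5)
Your overall plan for $6\le n\le9$ is the paper's: rescale $v_a(x)=u(ax)$, average $\|x\cdot Dv_a\|_{L^2(A_6)}^2+Q_{v_a}((x\cdot Dv_a)\eta)$ in $da/a$, split into two cases using \eqref{eq:radial-critical}, apply Proposition~\ref{prop:rigidity} at a good scale, and iterate. Your treatment of the other dimensions is also correct. However, the one estimate that makes the $6\le n\le9$ case work is left as a plan, and two of the routes you offer for it fail.

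What you need is an upper bound $\int_{a_1}^{a_0}q(a)\,\frac{da}{a}=o(\log(a_0/a_1))$, where $q(a)=Q_{v_a}((x\cdot Dv_a)\eta)$; the paper gets $O(E)$ uniformly in $a_1$. Your fallback of bounding the $w^2$-term by $C$ times the energy at each scale only gives $O(E\log(a_0/a_1))$. Averaging then yields merely $q(a)=O(E)$ at the selected scale, so Proposition~\ref{prop:rigidity} cannot be applied. Your heuristic ``if the differences of normalized energies are not small, we already have decay'' is also unjustified. A single-scale increment of $\Phi(r)=r^{2-n}\int_{B_r}|Du|^2$ of size comparable to $E$ does not give a contraction, because the only a priori information, $\Phi(s)\le K\Phi(t)$ for $s\le t/2$, allows $\Phi$ to oscillate. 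What actually works is that the signed contributions cancel across scales, combined with $q(a)\ge0$, which holds because $v_a$ is stable.

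Your first option (telescoping) is the right one, and it is exactly how the paper closes the argument. Set $H(t)=t^{n-2}\eta(t)^2\ge0$. The $w^2$-part of $q(a)$ coming from \eqref{eq:dilation} is
\[
 \int|Du|^2\,a^{2-n}\Bigl((2-n)\eta(|x|/a)^2-\tfrac{|x|}{a}(\eta^2)'(|x|/a)\Bigr)\,dx
 =-\int|Du|^2\,|x|^{2-n}\,\bigl(tH'(t)\bigr)\big|_{t=|x|/a}\,dx .
\]
Since $\int_{a_1}^{a_0}\bigl(tH'(t)\bigr)\big|_{t=r/a}\,\frac{da}{a}=H(r/a_1)-H(r/a_0)$, the integral of this term over $[a_1,a_0]$ is at most $\|H\|_{L^\infty}\int_{a_0/64<|x|<64a_0}|x|^{2-n}|Du|^2\le C(n)E$ by \eqref{eq:radial-critical}, uniformly in $a_1$. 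The $u_r$-terms are summable in $da/a$ by \eqref{eq:radial-budget}.

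Two smaller points need fixing. First, drop the factors $a^{2-n}$ in your averaged inequality. The quantities $\int_{A_6}|Dv_a|^2=a^{2-n}\int_{B_{64a}\setminus B_{a/64}}|Du|^2$, $\|x\cdot Dv_a\|_{L^2(A_6)}^2$ (comparable to $\int_{B_{64a}\setminus B_{a/64}}|x|^{2-n}u_r^2$), and $Q_{v_a}((x\cdot Dv_a)\eta)=a^{2-n}Q_u((x\cdot Du)\eta(x/a))$ are already scale-invariant. With the extra factor, the $z$-part is not summable and your displayed inequality is false.

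Second, set up the case split against the top scale rather than between consecutive blocks. If $\lambda^{2-n}\int_{B_\lambda}|Du|^2>\theta E$, then \eqref{eq:radial-critical} gives $\int_{A_1}|Dv_a|^2\ge c(n)\theta E$ for every $a\in[\lambda,1/2]$. By contrast, a ``no drop between consecutive blocks'' condition only gives lower bounds that decay geometrically in the number of blocks.
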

\begin{proof}
It remains to deal with the case $6\le n\le9$. By translation and scaling, it suffices to work in $B_1$. Set
\[
 F_0(t)=\int_{B_t}|Du|^2,\qquad
 D(t)=t^{2-n}\int_{B_t\setminus B_{t/3}}|Du|^2,\qquad
 \mathcal R(t)=\int_{B_t}|x|^{2-n}u_r^2.
\]
Lemma~\ref{lem:radial} gives constants $K=K(n)\ge1$ and $b=b(n)>0$ such that
\begin{equation}\label{eq:cross-scale}
 \begin{gathered}
 s^{2-n}F_0(s)\le KD(t),\qquad D(s)\le KD(t)\quad(0<s\le t/2),\\
 \mathcal R(t/3)\le bD(t).
 \end{gathered}
\end{equation}
We may assume $\int_{B_1}|Du|^2<\infty$, since otherwise \eqref{eq:energy-decay} is trivial. If $D(1)=0$, then \eqref{eq:cross-scale} gives $F_0(1/2)=0$, so \eqref{eq:energy-decay} follows immediately. Otherwise, replacing $u$ by $u/\sqrt{D(1)}$ and $f$ by $t\mapsto f(\sqrt{D(1)}\,t)/\sqrt{D(1)}$, we may assume $D(1)=1$.

Fix the radial cutoff function $\eta$ in Proposition~\ref{prop:rigidity} and put $a_+=1/256$. For $0<a\le a_+$, we set $v_a(x)=u(ax)$, $z_a=x\cdot Dv_a$, and $q(a)=Q_{v_a}(z_a\eta)$. By direct computation we have
\begin{equation}\label{eq:jacobi-scaling}
 q(a)
 =a^{2-n}Q_u\bigl(x\cdot Du(x)\eta(x/a)\bigr)\geq 0.
\end{equation}
We claim that there exists a constant $J=J(n)<\infty$ such that
\begin{equation}\label{eq:scale-average}
 \int_{a_-}^{a_+}\left(\|z_a\|_{L^2(A_6)}^2+q(a)\right)\frac{da}{a}\le J
 \qquad(\forall\;0<a_-<a_+).
\end{equation}
To deduce it, first note that \eqref{eq:cross-scale} gives $\mathcal R(1/4)\le b$. The change of variables $y=ax$ gives
\[
 \begin{aligned}
 \int_{a_-}^{a_+}\|z_a\|_{L^2(A_6)}^2\frac{da}{a}
 &\le\int_{B_{1/4}}r^2u_r^2
              \left(\int_{r/64}^{64r}a^{-n}\frac{da}{a}\right)dy\\
 &=\frac{64^n-64^{-n}}n\mathcal R(1/4)\le C(n).
 \end{aligned}
\]
To estimate $q(a)$, from \eqref{eq:jacobi-scaling} and \eqref{eq:dilation} we can compute that 
\[
 \begin{aligned}
 q(a)={}&\int a^{-n}r^2u_r^2|\eta'(r/a)|^2\,dx\\
 &+\int a^{2-n}|Du|^2
 \left((2-n)\eta(r/a)^2-\frac ra(\eta^2)'(r/a)\right)dx\\
 &+2\int a^{1-n}ru_r^2(\eta^2)'(r/a)\,dx.
 \end{aligned}
\]

For the first term, we have
\[
 \begin{aligned}
 &\int_{a_-}^{a_+}\int a^{-n}r^2u_r^2|\eta'(r/a)|^2\,dx\,\frac{da}{a}\\
 &\qquad\le\int_{B_{1/4}}r^2u_r^2
 \left(\int_0^\infty a^{-n}|\eta'(r/a)|^2\frac{da}{a}\right)dx\\
 &\qquad=\left(\int_0^\infty t^{n-1}|\eta'(t)|^2dt\right)
 \int_{B_{1/4}}r^{2-n}u_r^2\,dx
 \le C(n).
 \end{aligned}
\]

For the middle term, we set
\[
 \Psi(r)=\int_{a_-}^{a_+}a^{2-n}\eta(r/a)^2\frac{da}{a},\qquad
 H(t)=t^{n-2}\eta(t)^2.
\]
Then
\[
 r\Psi'(r)=\int_{a_-}^{a_+}a^{2-n}\frac ra(\eta^2)'(r/a)\frac{da}{a},
\]
and integration in $a$ transforms the middle term into
\[
 \int |Du|^2\bigl((2-n)\Psi(r)-r\Psi'(r)\bigr)\,dx.
\]
As we have
\[
 (2-n)\Psi(r)-r\Psi'(r)
 =r^{2-n}\bigl[H(r/a_+)-H(r/a_-)\bigr],
\]
and $H(r/a_+)\ne0$ only for
\[
 \ell:=a_+/64<r<h:=64a_+,
\]
the middle term is bounded above by
\[
 \|H\|_{L^\infty}\int_{\ell<|x|<h}|x|^{2-n}|Du|^2\,dx.
\]
Thanks to \eqref{eq:cross-scale}, we see it is further bounded by $C(n)$. 

For the third term, its absolute value is bounded by
\[
 \begin{aligned}
 &2\int_{a_-}^{a_+}\int a^{1-n}r u_r^2
 |(\eta^2)'(r/a)|\,dx\,\frac{da}{a}\\
 &\qquad\le
 2\left(\int_0^\infty t^{n-2}|(\eta^2)'(t)|dt\right)
 \int_{B_{1/4}}r^{2-n}u_r^2\,dx\\
 &\qquad\le C(n)\mathcal R(1/4)\leq C(n).
 \end{aligned}
\]
Combining the above estimates then yields \eqref{eq:scale-average}.

Next we set
\[
 M_0=64^{n-2}K,\qquad \mu_0=\frac{2^{n-3}}K,\qquad
 \varepsilon_* =\frac{\mu_0}{2M_0}.
\]
Take $\delta_*$ from Proposition~\ref{prop:rigidity} for this fixed $\varepsilon_*$, and define
\[
 T=1+\frac{2J}{M_0\delta_*^2},\qquad
 \vartheta=2^{-14}e^{-T},\qquad a_-=64\vartheta.
\]
Then $\log(a_+/a_-)=T$. Suppose that $D(\vartheta)>1/2$. Then for every $a\in[a_-,a_+]$, \eqref{eq:cross-scale} gives
\[
 \int_{A_6}|Dv_a|^2\le a^{2-n}F_0(64a)\le M_0,
 \qquad
 \int_{A_1}|Dv_a|^2\ge2^{n-2}D(2a)>\mu_0.
\]
By \eqref{eq:scale-average}, there exists $a\in [a_-,a_+]$ such that
\[
 \|z_a\|_{L^2(A_6)}^2+q(a)\le J/T<M_0\delta_*^2/2.
\]
Set $w:=v_a/\sqrt{M_0}$; then we can apply
Proposition~\ref{prop:rigidity} to deduce that $\int_{A_1}|Dw|^2\le \varepsilon_*$, which is equivalent to $\int_{A_1}|Dv_a|^2\le\mu_0/{2}$. This yields a contradiction. Therefore, we have $D(\vartheta)\leq1/2$. By rescaling we know that for every $0<t\leq 1$,
\begin{equation}\label{eq:contraction}
 D(\vartheta t)\le\tfrac12D(t).
\end{equation}

Now we choose
\begin{align}\label{add11}
 \alpha_n=\frac{\log2}{2\log(1/\vartheta)}.
\end{align}
Iteration of \eqref{eq:contraction} gives $D(\vartheta^k)\le2^{-k}D(1)$. Given $0<s\le1/2$, choose $k\ge0$ with $\vartheta^{k+1}/2<s\le\vartheta^k/2$. By \eqref{eq:cross-scale} we have
\[
 s^{2-n}F_0(s)\le K2^{-k}D(1)
 \le K2^{1+2\alpha_n}s^{2\alpha_n}D(1).
\]
Since $D(1)\le F_0(1)$, we arrive at \eqref{eq:energy-decay}. A standard scaling and covering argument then yields \eqref{eq:critical-morrey}. The proof is complete.
\end{proof}

\section{Interior regularity and sharpness}\label{sec3}

This section is devoted to the proofs of Theorems \ref{thm:gradient}, \ref{thm:holder}, and \ref{thm:obstructions}, based on the tools established in Section \ref{sec2}.

\subsection{Gradient and Hessian estimates}

\begin{proof}[Proof of Theorem~\ref{thm:gradient}]
We first prove that
\begin{equation}\label{add12}
 \|Du\|_{L^{2+\gamma_{n,q}}(B_{1/2})}
 +\|D^2u\|_{L^1(B_{1/2})}
 \le C(n,q,\gamma_{n,q})A_0,
\end{equation}
where
\[
 A_0:=\|Du\|_{L^2(B_{7/8})}+\|f^-(u)\|_{L^q(B_{7/8})}.
\]
Note that the Hessian part follows from Lemma~\ref{lem:endpoint}. In the following, we focus on the gradient part.

Put $w=|Du|$ and fix $\sigma>1$ and $1/2\le r<t\le2/3$. Choose
$\chi\in C_c^\infty(B_t)$ with $0\le\chi\le1$, $\chi\equiv1$ on $B_r$, and
$|D\chi|\le C(t-r)^{-1}$. On $\{w>0\}$ we write $e=Du/w$ and set
$T_0=\Delta u-D^2u[e,e]$. Note that $|T_0|\le C(n)\mathcal A_u$, and the identity
\[
 \operatorname{div}(w^{\sigma-1}Du)
 =\sigma w^{\sigma-1}\Delta u-(\sigma-1)T_0w^{\sigma-1}
\]
holds weakly, with zero values on $\{w=0\}$. By \eqref{eq:matrix} and \eqref{eq:laplace-sign}, we have
\[
 \begin{aligned}
 \int_{B_r}w^{\sigma-1}|D^2u|
 &\le \int_{B_t}\chi w^{\sigma-1}|\Delta u|
      +C(n)\int_{B_t}\chi\mathcal A_uw^{\sigma-1}\\
 &=-\int_{B_t}\chi w^{\sigma-1}\Delta u
      +2\int_{B_t}\chi f^-(u)w^{\sigma-1}
      +C(n)\int_{B_t}\chi\mathcal A_uw^{\sigma-1}.
 \end{aligned}
\]
The divergence identity gives
\[
 \begin{aligned}
 -\int_{B_t}\chi w^{\sigma-1}\Delta u
 &= -\frac1\sigma\int_{B_t}\chi\,\operatorname{div}(w^{\sigma-1}Du)
    -\frac{\sigma-1}{\sigma}\int_{B_t}\chi T_0w^{\sigma-1}\\
 &= \frac1\sigma\int_{B_t}w^{\sigma-1}Du\cdot D\chi
    -\frac{\sigma-1}{\sigma}\int_{B_t}\chi T_0w^{\sigma-1}\\
 &\le \frac{C}{t-r}\int_{B_t}w^\sigma
    +C(n,\sigma)\int_{B_t}\mathcal A_uw^{\sigma-1}.
 \end{aligned}
\]
Consequently, we get
\begin{equation}\label{eq:weighted-hessian}
 \begin{aligned}
 \int_{B_r}w^{\sigma-1}|D^2u|
 \le C(n,\sigma)\biggl(&\int_{B_t}f^-(u)w^{\sigma-1}
       +\int_{B_t}\mathcal A_u w^{\sigma-1}\\
       &+(t-r)^{-1}\int_{B_t}w^\sigma\biggr).
 \end{aligned}
\end{equation}

When $n=2$, it suffices to consider the case $1\leq q<2$. By \eqref{eq:endpoint-estimate}, we have $\|Du\|_{L^4(B_{3/4})}\le C A_0$. This proves the desired estimate for $q\le4/3$. For $4/3<q<2$, set $\sigma=q/(2-q)>2$; then $2+\gamma_{2,q}=2\sigma$. Define $s=(r+t)/2$ and choose
$\eta\in C_c^\infty(B_{s})$ with $\eta\equiv1$ on $B_r$ and
$|D\eta|\le C(t-r)^{-1}$. By applying the $W^{1,1}$ Sobolev inequality to $V:=\eta w^{\sigma-1}Du$, we derive
\[
 \begin{aligned}
 X(r)^\sigma
 &=\left(\int_{B_r}w^{2\sigma}\right)^{1/2}
 \le \|V\|_{L^2(\RR^2)}
 \le C\|DV\|_{L^1(\RR^2)},
 \end{aligned}
\]
where $X(\rho):=\|Du\|_{L^{2\sigma}(B_\rho)}$. Moreover,
\[
 |DV|\le C(\sigma)\eta w^{\sigma-1}|D^2u|+|D\eta|w^\sigma,
\]
because $|D(w^{\sigma-1}Du)|\le C(\sigma)w^{\sigma-1}|D^2u|$. Hence
\[
 X(r)^\sigma
 \le C\int_{B_s}w^{\sigma-1}|D^2u|
      +C(t-r)^{-1}\int_{B_s}w^\sigma.
\]
Applying \eqref{eq:weighted-hessian} with $r$ replaced by $s$ then gives
\begin{equation}\label{eq:two-dimensional-bootstrap}
 X(r)^\sigma
 \le C(t-r)^{-1}\left(I_1+I_2+I_3\right),
\end{equation}
with
\[
 I_1:=\int_{B_t}f^-(u)w^{\sigma-1},\qquad
 I_2:=\int_{B_t}\mathcal A_uw^{\sigma-1},\qquad
 I_3:=\int_{B_t}w^\sigma.
\]
For $I_1$, the identity $(\sigma-1)q'=2\sigma$ yields
\[
 I_1\le \|f^-(u)\|_{L^q(B_t)}X(t)^{\sigma-1}
 \le A_0X(t)^{\sigma-1}.
\]
For $I_2$, from \eqref{eq:geometric} and H\"older's inequality we get
\[
 I_2\le \|\mathcal{A}_u\|_{L^2(B_t)}\|w^{\sigma-1}\|_{L^2(B_t)}\leq CA_0X(t)^{\sigma-1}.
\]
For $I_3$, if $\sigma\le4$, from \eqref{eq:endpoint-estimate} and H\"older's inequality we get $I_3\le CA_0^\sigma$. If $\sigma>4$, by interpolating between $L^4$ and $L^{2\sigma}$:
\[
 \|w\|_{L^\sigma(B_t)}
 \le C\|w\|_{L^4(B_t)}^\theta\|w\|_{L^{2\sigma}(B_t)}^{1-\theta},
 \qquad \theta=\frac2{\sigma-2},
\]
and applying \eqref{eq:endpoint-estimate}, we can deduce
\[
 I_3\le CA_0^aX(t)^{\sigma-a},
 \qquad a=\sigma\theta=\frac{2\sigma}{\sigma-2}\in(0,\sigma).
\]
Substituting the above estimates in \eqref{eq:two-dimensional-bootstrap} and applying Young's inequality, we obtain, for every $\varepsilon\in(0,1)$,
\begin{equation}\label{eq:two-dimensional-small-constant}
 X(r)^\sigma\le\varepsilon X(t)^\sigma
       +C(\varepsilon)(t-r)^{-b'}A_0^\sigma,
 \qquad \frac12\le r<t\le\frac23,
\end{equation}
for some finite constant $b'=b'(q)$. By choosing $\varepsilon$ small, we can apply Simon's lemma, as in the proof of Lemma~\ref{lem:endpoint}, to obtain that
$X(1/2)\le CA_0$. 

For $n\ge3$, we put $\beta=\beta_n$ for simplicity. Recall that Proposition~\ref{prop:decay} gives
\[
 \|Du\|_{M^{2,\beta}(B_{3/4})}\le C A_0.
\]
By applying Lemma~\ref{lem:morrey-bv} with $p=2$ to a cutoff of $Du$ over $B_\frac{3}{4}$ and using \eqref{eq:endpoint-estimate}, we obtain
\[
 \|Du\|_{L^{1+2/\beta}(B_{2/3})}^{1+2/\beta}
 \le C A_0^{2/\beta}
       \left(\|D^2u\|_{L^1(B_{3/4})}+\|Du\|_{L^1(B_{3/4})}\right)
 \le C A_0^{1+2/\beta}.
\]
Together with the $L^{2n/(n-1)}$ bound in Lemma~\ref{lem:endpoint}, we know that it remains to consider the case
$\gamma_{n,q}+2>\max\{2n/(n-1),1+2/\beta\}$.

Set $m=2+\gamma_{n,q}$. Then $q>1$ and $\sigma:=m-\frac2\beta>1$. Put $s=(r+t)/2$, choose
$\eta\in C_c^\infty(B_s)$ with $\eta\equiv1$ on $B_r$ and
$|D\eta|\le C(t-r)^{-1}$, and set $V:=\eta w^{\sigma-1}Du$. Since $|V|=\eta w^\sigma$, we have
\[
 \|V\|_{M^{2/\sigma,\beta}(\RR^n)}^{2/\sigma}
 \le C\|Du\|_{M^{2,\beta}(B_{3/4})}^2\le CA_0^2.
\]
Put $X(\rho):=\|Du\|_{L^m(B_\rho)}$. Applying Lemma~\ref{lem:morrey-bv} with $p=2/\sigma$ then yields
\[
 X(r)^m\le CA_0^{2/\beta}\int_{\RR^n}|DV|.
\]
As in the two-dimensional case, we have
\[
 |DV|\le C(\sigma)\eta w^{\sigma-1}|D^2u|+|D\eta|w^\sigma,
\]
and thus
\[
 X(r)^m
 \le CA_0^{2/\beta}\left(
 \int_{B_s}w^{\sigma-1}|D^2u|
 +(t-r)^{-1}\int_{B_s}w^\sigma\right).
\]
Applying \eqref{eq:weighted-hessian} with $r$ replaced by $s$ then gives
\begin{equation}\label{eq:gradient-bootstrap}
 \begin{aligned}
 X(r)^m\le C(t-r)^{-1}A_0^{2/\beta}\biggl(&\int_{B_t}f^-(u)w^{\sigma-1}
           +\int_{B_t}\mathcal A_u w^{\sigma-1}
           +\int_{B_t}w^\sigma\biggr).
 \end{aligned}
\end{equation}
Moreover, as the choices in \eqref{eq:gamma} show that
\begin{equation}\label{eq:exponent-check}
 (\sigma-1)q'\le m,\qquad 2\sigma-2\le m,\qquad \sigma<m,
\end{equation}
we can argue as in the case $n=2$ to deduce that, for every $\varepsilon\in(0,1)$,
\begin{equation}\label{eq:high-dimensional-small-constant}
 X(r)^m\le\varepsilon X(t)^m
       +C(\varepsilon)(t-r)^{-b'}A_0^m
\end{equation}
for some finite constant $b'=b'(n,q)$. Then by choosing $\varepsilon$ sufficiently small in terms of $b'$, we can apply Simon's lemma, as in the proof of Lemma~\ref{lem:endpoint}, to obtain that $X(1/2)\le CA_0$.

Now we have established \eqref{add12}. To pass from \eqref{add12} to \eqref{eq:main-gradient}, we first apply a standard scaling and covering argument to deduce that
\[
 \|Du\|_{L^{2+\gamma_{n,q}}(B_{1/2})}+\|D^2u\|_{L^1(B_{1/2})}
 \le C\left(\|Du\|_{L^2(B_{5/8})}+\|f^-(u)\|_{L^q(B_{5/8})}\right).
\]
Then by \eqref{eq:annulus-data} with $r=5/8$, $s=3/4$, and $c=0$, we see that the term $\|Du\|_{L^2(B_{5/8})}$ is bounded by $C(n)\mathfrak D(u)$. This gives \eqref{eq:main-gradient}. \eqref{eq:main-endpoint} then follows by taking $q=1$.
\end{proof}

\subsection{H\"older regularity}

\begin{proof}[Proof of Theorem~\ref{thm:holder}]
Fix $0<r<R<1$. For $x\in B_r$ and $0<\rho\le (R-r)/2$, applying \eqref{eq:energy-decay} on $B_{R-r}(x)\subseteq B_R$ and using $n-\beta_n=n-2+2\alpha_n$ gives
\[
 \int_{B_\rho(x)}|Du|^2
 \le C(n,r,R)\rho^{n-2+2\alpha_n}\|Du\|_{L^2(B_R)}^2.
\]
By H\"older's inequality we get
\[
 \int_{B_\rho(x)\cap B_r}|Du|
 \le C(n,r,R)\rho^{n-1+\alpha_n}\|Du\|_{L^2(B_R)}.
\]
For $\rho>(R-r)/2$, the latter estimate follows from $\int_{B_\rho(x)\cap B_r}|Du|\le |B_r|^{1/2}\|Du\|_{L^2(B_R)}$, after enlarging $C(n,r,R)$. The Morrey estimate of Gilbarg--Trudinger \cite[Thm.\,7.19]{GilbargTrudinger2001} therefore yields
\begin{equation}\label{eq:holder-energy}
 [u]_{C^{0,\alpha_n}(\ol{B_r})}
 \le C(n,r,R)\|Du\|_{L^2(B_R)}.
\end{equation}
Taking $r=1/2$ and $R=3/4$ proves \eqref{eq:main-holder0}.

Applying Proposition~\ref{prop:annulus}, with $r=15/16$, $s=3/4$, and $c=0$, and using \eqref{eq:holder-energy} with $r=7/8$ and $R=15/16$ yield
\[
 [u]_{C^{0,\alpha_n}(\ol{B_{7/8}})}\le C(n)\mathfrak D(u).
\]
For $x\in B_{1/2}$, we can average $|u(x)|\le |u(z)|+2^{\alpha_n}[u]_{C^{0,\alpha_n}(\ol{B_{7/8}})}$ over $z\in B_{7/8}\setminus\ol{B_{3/4}}$ to get
\[
 \|u\|_{L^\infty(B_{1/2})}\le C(n)\mathfrak D(u).
\]
Combining the above two estimates gives \eqref{eq:main-holder}.
\end{proof}

\subsection{Obstructions and sharpness}

\begin{proof}[Proof of Theorem~\ref{thm:obstructions}]
For part~(i), set
\[
 u_j(x)=\tanh(jx_1),\qquad f_j(t)=2j^2t(1-t^2).
\]
Clearly $|u_j|\le1$. Besides, we can compute that $-\Delta u_j=f_j(u_j)$, and
$\phi_j:=\partial_1u_j=j\operatorname{sech}^2(jx_1)>0$ satisfies
$-\Delta\phi_j=f_j'(u_j)\phi_j$. From  Lemma~\ref{lem:ground-state} we know that $u_j$ is stable. For $1<p<\infty$, we have
\[
 \int_{B_{1/2}}|Du_j|^p\ge c(n,p)j^{p-1}
                         \int_{-1}^1\operatorname{sech}^{2p}t\,dt.
\]
Also $\|Du_j\|_{L^\infty(B_{1/2})}=j$. Since
\[
 |\partial_{11}u_j|=2j^2|\tanh(jx_1)|\operatorname{sech}^2(jx_1)
\]
on $1/(2j)<x_1<1/j$, we can obtain $\|D^2u_j\|_{L^1(B_{1/2})}\ge c(n)j$. For large $j$, evaluating the value of $u_j$ at two points $\pm e_1/j$ gives
\[
 [u_j]_{C^{0,\alpha}(\ol{B_{1/2}})}
 \ge 2^{1-\alpha}\tanh(1)j^\alpha.
\]
Therefore, $u_j,f_j$ satisfy our requirements.

\vskip0.1in

For part~(ii), choose $\psi\in C_c^\infty((-1,1))$ with $0\le\psi\le1$ and $\psi=1$ on $[-1/2,1/2]$. Let $0<c\le1/16$ be a dimensional constant to be fixed below, and set
\[
 b_j(r)=1+j\psi\left(\frac{j^2(r-1/4)}c\right),\qquad
 u_j(x)=\int_{|x|}^1t b_j(t)\,dt.
\]
By definition we can check that $u_j$ is smooth, radial, $0\leq u_j\leq1$, $u_j=0$ on $\partial B_1$, and $u_j(x)=(1-|x|^2)/2$ for $|x|\ge3/8$. 

As $u_j$ is strictly decreasing radially, we can define $f_j$ on the range of $u_j$ by
\begin{equation}\label{eq:radial-profile}
 f_j\left(\int_r^1t b_j(t)\,dt\right)=n b_j(r)+r b_j'(r).
\end{equation}
Since the right-hand side is identically $n$ when $r$ is close to $0$ or $1$, we can smoothly extend $f_j$ to a smooth function in $\RR$. Note that \eqref{eq:radial-profile} is exactly $-\Delta u_j=f_j(u_j)$. In particular, we have $f_j^-(u_j)=0$ on $B_1\setminus B_{3/4}$.

To show the stability of $u_j$, we take $\xi\in C_c^1(B_1\setminus\{0\})$ and write $\xi=r b_j(r)\zeta$. Then we have
\[
 (-\Delta-f_j'(u_j))(r b_j)=-(n-1)r^{-2}(r b_j),
\]
and
\[
 Q_{u_j}(\xi)=\int_{B_1} r^2b_j^2|D\zeta|^2-(n-1)\int_{B_1} b_j^2\zeta^2.
\]
Since $b_j\ge1$,
\[
 \begin{aligned}
 Q_{u_j}(\xi)
 &\ge \int_{B_1}r^2|D\zeta|^2-(n-1)\int_{B_1}\zeta^2
      -(n-1)\int_{B_1}(b_j^2-1)\zeta^2\\
 &=\int_{B_1}|D(r\zeta)|^2-(n-1)\int_{B_1}(b_j^2-1)\zeta^2.
 \end{aligned}
\]
For $3/16\le r\le5/16$, we can write
\[
 r\zeta(r\omega)=-\int_r^1\partial_t\bigl(t\zeta(t\omega)\bigr)\,dt,
\]
and by H\"older's inequality, we get
\[
 \begin{aligned}
 |\zeta(r\omega)|^2
 &\le r^{-2}\left(\int_r^1t^{1-n}\,dt\right)
       \int_r^1t^{n-1}|\partial_t(t\zeta(t\omega))|^2\,dt\\
 &\le C(n)\int_0^1t^{n-1}|\partial_t(t\zeta(t\omega))|^2\,dt.
 \end{aligned}
\]
Let $I_j=(1/4-c/j^2,1/4+c/j^2)$. Since $b_j^2-1\le3j^2$ on $I_j$ and vanishes outside it, it holds that
\[
 \begin{aligned}
 \int_{B_1}(b_j^2-1)\zeta^2
 &\le 3j^2\int_{\mathbb S^{n-1}}\int_{I_j}|\zeta(r\omega)|^2r^{n-1}\,dr\,d\omega\\
 &\le6cC(n)\int_{\mathbb S^{n-1}}\int_0^1 t^{n-1}|\partial_t(t\zeta(t\omega))|^2\,dt\,d\omega\\
 &\le6cC(n)\int_{B_1}|D(r\zeta)|^2.
 \end{aligned}
\]
By choosing $c$ such that $6cC(n)(n-1)\le1/2$, we derive $Q_{u_j}(\xi)\ge0$. A simple approximation argument then yields $Q_{u_j}\geq 0$ for every $C_c^1$ function, which means that $u_j$ is stable.

Since $u_j'(r)=-rb_j(r)$, $b_j=1$ outside $I_j$ and $b_j^2\le1+3j^2$ on $I_j$, we have
\[
 \begin{aligned}
 \int_{B_1}|Du_j|^2
 &=|\mathbb S^{n-1}|\int_0^1r^{n+1}b_j(r)^2\,dr\\
 &\le C(n)\left(1+j^2|I_j|\right)\le C(n,c).
 \end{aligned}
\]
On $|r-1/4|\le c/(2j^2)$ one has $b_j(r)=1+j$ and $r\ge3/16$, so $|Du_j|=r(1+j)\ge3j/16$. Since this interval has length $c/j^2$, we have
\[
 \int_{B_{1/2}}|Du_j|^p\ge c(n,p)j^{p-2}\qquad(p>2).
\]
Besides, with $a_j:=1/4-c/j^2$, we can compute that
\[
 \begin{aligned}
 \|D^2u_j\|_{L^1(B_{1/2})}
 &\ge c(n)\int_{a_j}^{1/4}|(rb_j(r))'|\,dr\\
 &\ge c(n)\left|\frac14b_j(1/4)-a_jb_j(a_j)\right|.
 \end{aligned}
\]
Since $b_j(1/4)=1+j$ and $b_j(a_j)=1$, the last quantity equals $c(n)(j/4+c/j^2)\ge c(n)j$. Therefore, $u_j,f_j$ satisfy our requirements.

\vskip0.1in

For part~(iii), take $\rho\in C_c^\infty((-1,1))$, $\rho\ge0$, with $\int_{-1}^1\rho(s)\,ds=1$, and define
\[
 q_j(s)=\tfrac14j\rho(js),\qquad
 a_j(s)=1-\int_{-\infty}^s q_j(t)\,dt,\qquad
 \varphi_j(s)=\int_0^s a_j(t)\,dt.
\]
Because $3/4\le a_j\le1$, the function $\varphi_j$ is a smooth increasing diffeomorphism of $\RR$. Set
\[
 u_j(x)=\varphi_j(x_1),\qquad f_j(t)=q_j(\varphi_j^{-1}(t)).
\]
Then $f_j$ is smooth, compactly supported, and nonnegative, and
$-\Delta u_j=f_j(u_j)$. Since the positive function $\partial_1u_j=a_j(x_1)$ satisfies $-\Delta a_j=f_j'(u_j)a_j$, Lemma~\ref{lem:ground-state} shows the stability of $u_j$. On $B_1$, it is also straightforward to check that both $u_j$ and $Du_j$ are uniformly bounded. The only nonzero second derivative of $u_j$ is
$\partial_{11}u_j=-j\rho(jx_1)/4$. For $j\ge4$, we can compute that
\[
 \int_{B_{1/2}}|D^2u_j|^p
 \geq c(n,p)j^{p-1}\int_{-1}^1\rho(s)^pds
 \quad(\forall\;1<p<\infty),
\]
and the $L^\infty$ norm is $j\|\rho\|_{L^\infty(-1,1)}/4$. Therefore, $u_j,f_j$ satisfy our requirements.
\end{proof}

\section{BMO and Morrey estimates}\label{sec4}
In this section we provide the proof of Theorem \ref{thm:morrey}.
\begin{proof}[Proof of Theorem~\ref{thm:morrey}]
Set $G=B_{7/8}$ and $S=B_{7/8}\setminus\ol{B_{3/4}}$. By \eqref{eq:annulus-data} and \eqref{eq:critical-morrey}, we have
\begin{equation}\label{eq:morrey-energy-data}
 \|Du\|_{L^2(B_{15/16})}+\|Du\|_{M^{2,\beta_n}(G)}
 \le C(n)\mathfrak D(u),
\end{equation}
which implies that
\begin{equation}\label{eq:morrey-all-beta}
 \|Du\|_{M^{2,\beta}(G)}\le C(n,\beta)\mathfrak D(u)
 \qquad(\forall\;\beta_n\le\beta\le n).
\end{equation}
Since $S\subseteq B_{1}\setminus B_{3/4}$ has fixed positive measure, $|u_S|\le C(n)\mathfrak D(u)$. Besides, Poincar\'e's inequality on $G$ shows
\[
 \|u-u_S\|_{L^2(G)}\le C(n)\|Du\|_{L^2(G)}.
\]
Therefore, we get
\begin{equation}\label{eq:mean-control}
 \|u\|_{L^1(G)}+|u_S|\le C(n)\mathfrak D(u).
\end{equation}

For $2\le n\le10$, we use \eqref{eq:morrey-all-beta} with $\beta=2$. If $x\in B_{1/2}$ and $D:=B_{1/2}\cap B_r(x)$, then $D$ is convex,
$|D|\ge c(n)\min\{r,1\}^n$, and $\operatorname{diam}D\le2\min\{r,1\}$. Poincar\'e's inequality on convex domains (see \cite[Lem.\,7.16]{GilbargTrudinger2001} for example) and H\"older's inequality then imply
\[
 \fint_D|u-u_D|\le C(n)\min\{r,1\}|D|^{-1/2}\|Du\|_{L^2(D)}
 \le C(n)\mathfrak D(u).
\]
Together with \eqref{eq:mean-control}, this gives the BMO bound.

For part~(iii), put $\beta=2p/(p-2)$. The ranges of $p$ in part~(iii) give
$2<\beta\le n$ when $n=10$, and $\beta_n\le\beta\le n$ when $n\ge11$. Poincar\'e's inequality on convex domains (see \cite[Lem.\,7.16]{GilbargTrudinger2001} for example) gives
\[
 |u(x)-u_S|\le\frac{(\operatorname{diam}G)^n}{n|S|}
          \int_G|x-y|^{1-n}|Du(y)|\,dy\leq C(n)I_1(|Du|\mathbf1_G)\qquad(x\in G).
\]
From \cite[(3.3) in Thm.\,3.1]{Adams1975} we have the Riesz-potential embedding
\[
 I_1:M^{2,\beta}(\RR^n)\longrightarrow
       M^{2\beta/(\beta-2),\beta}(\RR^n),\qquad 2<\beta\le n.
\]
Therefore, from \eqref{eq:morrey-all-beta} we obtain
\[
 \|u-u_S\|_{M^{p,\beta}(B_{1/2})}\le C(n,p)\mathfrak D(u).
\]
Since the constant function $u_S$ has $M^{p,\beta}(B_{1/2})$ norm at most $C(n,p)|u_S|$, from \eqref{eq:mean-control} we derive the desired estimate.
\end{proof}

\section{Approximation and stable weak solutions}\label{sec5}

In this section, we first establish the interior approximation result (Theorem~\ref{thm:approximation}) and then use it to derive regularity estimates for stable weak solutions (Theorem~\ref{thm:weak}).

\subsection{Interior approximation}

\begin{proof}[Proof of Theorem~\ref{thm:approximation}]
The whole proof is divided into four steps.

\vskip0.1in

\emph{Step 1.}
Fix a nonzero function $\eta\in C_c^1(B_1)$ and set $C_0=1+\int_{B_1}|D\eta|^2/\int_{B_1}\eta^2$. Since $f$ is convex, $f'_-$ is nondecreasing. By stability,
\[
 \inf_{t\in\RR}f'_-(t)\int_{B_1}\eta^2
 \le \int_{B_1}f'_-(u)\eta^2
 \le \int_{B_1}|D\eta|^2.
\]
Hence we can choose $t_0<0$ such that $A:=f'_-(t_0)<C_0$, and put $K=At_0-f(t_0)$. The convexity of $f$ gives
\begin{equation}\label{eq:approx-linear-lower}
 f(t)\ge At-K\qquad(t\in\RR).
\end{equation}
Choose $r_0=r_0(n)\in(0,1/4)$ small such that
\begin{equation}\label{eq:approx-eigenvalue}
 \lambda_1(B_{2r_0})>C_0>A,
\end{equation}
where $\lambda_1(B)$ denotes the first Dirichlet eigenvalue of $-\Delta$ on $B$.

On $B_{2r_0}$, \eqref{eq:approx-eigenvalue} makes the bilinear form
\[
 (w,\phi)\longmapsto \int_{B_{2r_0}}Dw\cdot D\phi-A\int_{B_{2r_0}}w\phi
\]
coercive on $H_0^1(B_{2r_0})$. Since $u\in H_{\loc}^{1}(B_1)$ and
$-\Delta u=f(u)$ weakly, the functional
\[
 \phi\longmapsto A\int_{B_{2r_0}}u\phi-K\int_{B_{2r_0}}\phi
                 -\int_{B_{2r_0}}Du\cdot D\phi
\]
belongs to $H^{-1}(B_{2r_0})$.  The Lax--Milgram theorem therefore gives a unique function
$w\in H_0^1(B_{2r_0})$ satisfying
\[
 -\Delta w-Aw=Au-K-f(u)
\]
in the weak sense.  Set $V=u+w$.  Then
\begin{equation}\label{eq:approx-linear-V}
 -\Delta V=AV-K\quad\hbox{in }B_{2r_0},
 \qquad V-u\in H_0^1(B_{2r_0}).
\end{equation}
By \eqref{eq:approx-linear-lower}, we have $-\Delta w-Aw\le0$. Testing this inequality with $w^+$ and applying Poincar\'e's inequality, we obtain
\[
 \lambda_1(B_{2r_0})\int_{B_{2r_0}}|w^+|^2
 \le \int_{B_{2r_0}}|Dw^+|^2
 \le A\int_{B_{2r_0}}|w^+|^2.
\]
Since $\lambda_1(B_{2r_0})>A$ by \eqref{eq:approx-eigenvalue}, this implies $w^+=0$, and hence $V\le u$ almost everywhere.  The equation in
\eqref{eq:approx-linear-V} and interior elliptic regularity show that $V$ is smooth in
$B_{2r_0}$.  Consequently, we have
\begin{equation}\label{eq:approx-negative-u}
 \|u^-\|_{L^\infty(B_{r_0})}<\infty,
\end{equation}
since $u^-\le V^-$ a.e. in $B_{r_0}$.

The same construction on $B_{r_0}$ gives a unique function
$v\in H^1(B_{r_0})$ such that
\begin{equation}\label{eq:approx-v}
 -\Delta v=Av-K\quad\hbox{in }B_{r_0},
 \qquad v-u\in H_0^1(B_{r_0}),
\end{equation}
and $v\le u$ a.e. Let
$M_0=\|u^-\|_{L^\infty(B_{r_0})}$ and let $v_0$ solve the equation in
\eqref{eq:approx-v} with the constant boundary value $-M_0$.  Since
$u\ge-M_0$ a.e. and $\lambda_1(B_{r_0})>A$, the maximum principle gives
$v\ge v_0$.  Hence
\begin{equation}\label{eq:approx-v-lower}
 M:=\|v^-\|_{L^\infty(B_{r_0})}<\infty,
 \qquad -M\le v\le u\quad\hbox{a.e. in }B_{r_0}.
\end{equation}

If $\operatorname*{ess\,sup}_{B_{r_0}} f'_-(u)<\infty$, then
\eqref{eq:approx-negative-u} and convexity imply
$|f(u)|\le C(1+|u|)$ in $B_{r_0}$.  Interior elliptic regularity theory then guarantees that $u$ is classical in $B_{r_0}$.
In this case, we just take $f_k=f$ and $u_k=u$. It remains to consider the case
\begin{equation}\label{eq:approx-unbounded-slope}
 \operatorname*{ess\,sup}_{B_{r_0}} f'_-(u)=+\infty.
\end{equation}

\vskip0.1in

\emph{Step 2.}
For $k\ge1$, define
\begin{equation}\label{eq:approx-fk}
 f_k(t)=
 \begin{cases}
 f(t),&t\le k,\\
 f(k)+f'_-(k)(t-k),&t>k.
 \end{cases}
\end{equation}
The convexity of $f$ gives
\begin{equation}\label{eq:approx-fk-order}
 f_k\le f_{k+1}\le f,
 \qquad f_k(t)\uparrow f(t)\quad(t\in\RR),
 \qquad f_k(t)\ge At-K.
\end{equation}
Moreover, each $f_k$ is convex and is Lipschitz on $[-M,\infty)$, and we can take $\gamma\ge\max\{0,-f'_-(-M)\}$ such that $t\mapsto f_k(t)+\gamma t$ is nondecreasing on $[-M,\infty)$ for every $k$.

Set $u_k^{(0)}=v$.  Inductively, let $u_k^{(j)}\in H^1(B_{r_0})$ be the unique weak solution of
\begin{equation}\label{eq:approx-iteration}
 \begin{cases}
 -\Delta u_k^{(j)}+\gamma u_k^{(j)}
 =f_k(u_k^{(j-1)})+\gamma u_k^{(j-1)}&\text{in }B_{r_0},\\
 u_k^{(j)}-u\in H_0^1(B_{r_0}).
 \end{cases}
\end{equation}
Using \eqref{eq:approx-v}, \eqref{eq:approx-fk-order}, the monotonicity of
$f_k+\gamma\,\mathrm{id}$, and the weak maximum principle, we can inductively deduce that
\begin{equation}\label{eq:approx-order-jk}
 v\le u_k^{(j)}\le u_k^{(j+1)}\le u,
 \qquad
 u_k^{(j)}\le u_{k+1}^{(j)}
 \quad\text{a.e. in }B_{r_0}.
\end{equation}
Note that \eqref{eq:approx-order-jk} gives a uniform $L^2$ bound for $\{u_k^{(j)}\}_{j,k}$. For the gradient bound, testing
\eqref{eq:approx-iteration} with $u-u_k^{(j)}\in H_0^1(B_{r_0})$ yields
\begin{align*}
 \int_{B_{r_0}}|Du_k^{(j)}|^2
 &=\int_{B_{r_0}} Du_k^{(j)}\cdot Du
   -\int_{B_{r_0}} f_k(u_k^{(j-1)})(u-u_k^{(j)})\\
 &\quad
   +\gamma\int_{B_{r_0}} (u_k^{(j)}-u_k^{(j-1)})(u-u_k^{(j)}).
\end{align*}
By \eqref{eq:approx-fk-order} and \eqref{eq:approx-order-jk}, the last two terms are bounded above by
\[
 \int_{B_{r_0}} (K-Au_k^{(j-1)})(u-u_k^{(j)})
 +\gamma\int_{B_{r_0}} (u-v)^2.
\]
Since $|u_k^{(j-1)}|\le |u|+|v|$, Young's inequality gives
\begin{equation}\label{eq:approx-H1-uniform}
 \sup_{j,k}\|u_k^{(j)}\|_{H^1(B_{r_0})}<\infty.
\end{equation}

For fixed $k$, from \eqref{eq:approx-order-jk} and \eqref{eq:approx-H1-uniform} we can find
$u_k\in H^1(B_{r_0})$ such that
\[
 u_k^{(j)}\rightharpoonup u_k\quad\hbox{in }H^1(B_{r_0}),
 \qquad
 u_k^{(j)}\to u_k\quad\hbox{in }L^2(B_{r_0})\text{ and a.e. in }B_{r_0}
\]
as $j\to\infty$.  Since $f_k$ is Lipschitz on $[-M,\infty)$,
$f_k(u_k^{(j)})\to f_k(u_k)$ in $L^2(B_{r_0})$.  Passing to the limit in
\eqref{eq:approx-iteration} gives
\begin{equation}\label{eq:approx-uk-equation}
 -\Delta u_k=f_k(u_k)\quad\hbox{in }B_{r_0},
 \qquad u_k-u\in H_0^1(B_{r_0}),
\end{equation}
and
\begin{equation}\label{eq:approx-uk-order}
 v\le u_k\le u_{k+1}\le u\quad\hbox{a.e. in }B_{r_0}.
\end{equation}
Since $f_k$ is Lipschitz on $[-M,\infty)$, interior elliptic regularity theory yields that $u_k\in C^2(B_{r_0})$ is a classical solution.

For the stability of $u_k$, the convexity of $f$ and \eqref{eq:approx-fk} give
\[
 (f_k)'_-(t)\le f'_-(t)\qquad(\forall\;t\in\RR).
\]
Together with $u_k\le u$ and the monotonicity of $f'_-$, this gives
\[
 (f_k)'_-(u_k)\le f'_-(u_k)\le f'_-(u)\quad\hbox{a.e.}
\]
By the stability of $u$, it follows that
\[
 \int_{B_{r_0}}(f_k)'_-(u_k)\xi^2
 \le\int_{B_{r_0}}|D\xi|^2
 \qquad\bigl(\forall\;\xi\in C_c^1(B_{r_0})\bigr).
\]
This shows that $u_k$ is also stable.

\vskip0.1in

\emph{Step 3.}
By \eqref{eq:approx-H1-uniform} and \eqref{eq:approx-uk-order}, the monotone limit
$u_*\in H^1(B_{r_0})$ satisfies
\begin{equation}\label{eq:approx-limit}
 u_k\rightharpoonup u_*\quad\hbox{in }H^1(B_{r_0}),
 \qquad
 u_k\to u_*\quad\hbox{in }L^2(B_{r_0})\text{ and a.e. in }B_{r_0},
 \qquad
 v\le u_*\le u,
\end{equation}
$u_*-u\in H_0^1(B_{r_0})$, and
$f_k(u_k)\to f(u_*)$ a.e. To pass to the limit in the equation, note that by
\eqref{eq:approx-v-lower} and \eqref{eq:approx-uk-order},
$-M\le u_k\le u$.  Since $f$ is convex,
\[
 f_k(u_k)\le f(u_k)\le\max\{f(-M),f(u)\},
\]
whereas \eqref{eq:approx-fk-order} gives
$f_k(u_k)\ge Au_k-K$.  Consequently
\begin{equation}\label{eq:approx-domination}
 |f_k(u_k)|\le C\bigl(1+|u|+|f(u)|\bigr)
 \qquad\hbox{a.e. in }B_{r_0},
\end{equation}
where the constant $C$ is independent of $k$. The Lebesgue 
dominated convergence theorem then gives $f_k(u_k)\to f(u_*)$ in $L^1(B_{r_0})$. Passing to the limit in \eqref{eq:approx-uk-equation} yields
\begin{equation}\label{eq:approx-limit-equation}
 -\Delta u_*=f(u_*)\quad\hbox{weakly in }B_{r_0}.
\end{equation}

To identify $u_*=u$, we set $v=u-u_*\geq 0$. From the convexity of $f$ we get that
\begin{align}
    -\Delta v=f(u)-f(u_*)\geq f_-'(u_*)v,\nonumber
\end{align}
which implies
\begin{align}
    -\Delta v+(f_-'(u_*))^-v\geq 0.\nonumber
\end{align}
Since $u_*\geq -M$, we know that $(f_-'(u_*))^-$ is bounded. From \cite[Thm.\,8.19]{GilbargTrudinger2001} we have that either $v\equiv0$, or $v>0$ a.e. in $B_{r_0}$. In the first case we are done. In the second case, we can follow the argument in \cite[Proof of Thm.\,1.3, Step~5]{PengZhangZhou2026} to obtain a contradiction. Thus $u_*=u$, and \eqref{eq:approx-uk-order}--\eqref{eq:approx-limit} give $u_k\uparrow u$ a.e. and $u_k\to u$ in $L^2(B_{r_0})$.

\vskip0.1in

\emph{Step 4.} It remains to show the strong $H^1$ convergence $u_k\rightarrow u$. For this, we just need to argue as in \cite[Proof of Thm.\,1.3, Step~6]{PengZhangZhou2026}. Specifically, testing \eqref{eq:approx-uk-equation} with $u-u_k$ and using $f_k(u_k)\ge Au_k-K$ give
\[
 \limsup_{k\to\infty}\int_{B_{r_0}}|Du_k|^2\le\int_{B_{r_0}}|Du|^2.
\]
Weak lower semicontinuity gives the reverse inequality for the liminf, and thus $Du_k\to Du$ in $L^2(B_{r_0})$. Together with the $L^2$ convergence, this gives $u_k\to u$ strongly in $H^1(B_{r_0})$.
\end{proof}

\subsection{Regularity estimates for stable weak solutions}

\begin{proof}[Proof of Theorem~\ref{thm:weak}]
Let $(u_k,f_k)$ be the approximating sequence given by Theorem~\ref{thm:approximation}. Applying the scaled forms of Theorems~\ref{thm:gradient}, \ref{thm:holder}, and~\ref{thm:morrey} gives the corresponding estimates on $B_{r_0/2}$. For $1\le q\le2$, the right-hand sides converge by strong $H^1$ convergence and dominated convergence, using $f_k^-(u_k)\le C(1+|u|)$ with $C$ independent of $k$.

Moreover, \eqref{eq:geometric} and~\eqref{eq:matrix}, together with the strong $L^1$ convergence of $f_k(u_k)$, imply that $D^2u_k$ is uniformly integrable on $B_{r_0/2}$. By the Dunford--Pettis theorem, after passing to a subsequence, $D^2u_k$ converges weakly in $L^1(B_{r_0/2})$. Since $u_k\to u$ strongly in $H^1(B_{r_0})$, the limit is $D^2u$. Weak lower semicontinuity then yields the Hessian estimate. The remaining estimates pass to the limit by Fatou's lemma, strong $L^1$ convergence, and the Arzel\`a--Ascoli theorem.

The cases $q>2$, including $q=\infty$, follow from the case $q=2$ by H\"older's inequality. Finally, scaling and covering give the required estimates on $B_{1/2}$, with the full-ball data in Theorem~\ref{thm:weak} replacing the annular data.
\end{proof}

\section{Liouville-type theorems}\label{sec6}

This section is devoted to Liouville-type results for entire stable solutions. In the first subsection, we establish general growth criteria that play an important role in the proof of Theorem~\ref{thm:liouville} and are also of independent interest. In the second subsection, we prove Theorem~\ref{thm:liouville} using these criteria and the regularity estimates obtained in the preceding sections.

\subsection{Growth criteria for entire stable solutions}

Our first result shows that $L^1$ growth bounds for $u^-$ and $(\Delta u)^+$ over annuli yield corresponding bounds for $u$ and $(\Delta u)^+$ over balls, as well as $L^2$ estimates for $Du$ when $u$ is stable.

\begin{lemma}\label{lem:mass-conversion}
Let $u\in C^2(\RR^n)$, and set $A_R=B_{2R}\setminus\ol{B_R}$.
\begin{enumerate}
\item[(i)] Let $n\ge2$ and $\gamma>0$. If
\[
 \|u^-\|_{L^1(A_R)}=o(R^{n+\gamma}),\qquad
 \|(\Delta u)^+\|_{L^1(A_R)}=o(R^{n-2+\gamma}),
\]
then $\|u\|_{L^1(B_R)}=o(R^{n+\gamma})$ and
$\|(\Delta u)^+\|_{L^1(B_R)}=o(R^{n-2+\gamma})$. If $u$ is also a stable solution of \eqref{eq:pde}, then
\begin{equation}\label{eq:mass-energy}
 \int_{B_R}|Du|^2=o(R^{n-2+2\gamma}).
\end{equation}
All these conclusions also hold with $O$ in place of $o$ in both hypotheses and conclusions.
\item[(ii)] Let $n\ge3$. If
\[
 \|u^-\|_{L^1(A_R)}=o(R^n\log R),\qquad
 \|(\Delta u)^+\|_{L^1(A_R)}=o(R^{n-2}),
\]
then
\begin{equation}\label{eq:log-mass}
 \|u\|_{L^1(B_R)}=o(R^n\log R),\qquad
 \int_{B_R\setminus B_1}(\Delta u)^+(x)|x|^{2-n}\,dx=o(\log R).
\end{equation}
\end{enumerate}
\end{lemma}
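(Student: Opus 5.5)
The proof rests on a Green-type representation on dyadic annuli: the negative part of $\Delta u$ can be controlled by $u$ and $(\Delta u)^+$. Together with $|u|=u+2u^-$, this converts one-sided information into two-sided bounds.

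\emph{Part (i).} First I control $(\Delta u)^+$ over balls by summing over dyadic annuli. Since $B_R\subseteq \ol{B_1}\cup\bigcup_{k\ge1}A_{R2^{-k}}$, we get
\[
\|(\Delta u)^+\|_{L^1(B_R)}\le C+\sum_{k\ge1}o\bigl((R2^{-k})^{n-2+\gamma}\bigr).
\]
Because $n-2+\gamma>0$, the geometric series gives $o(R^{n-2+\gamma})$; one splits the sum into $R2^{-k}\ge R_0$ and the remainder, which is bounded. The same argument gives $\|u^-\|_{L^1(B_R)}=o(R^{n+\gamma})$.

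Next I bound $\int u$ from above using spherical means. Let $\bar u(r)$ be the mean of $u$ over $\partial B_r$. The identity
\[
r^{n-1}\bar u'(r)=|\mathbb S^{n-1}|^{-1}\int_{B_r}\Delta u
\]
gives $\bar u'(r)\le Cr^{1-n}\|(\Delta u)^+\|_{L^1(B_r)}=o(r^{\gamma-1})$. Integrating, $\bar u(r)\le \bar u(1)+o(r^\gamma)$. Combined with the bound on $u^-$, this yields $\int_{B_R}u^+\le \int_{B_R}u+2\int_{B_R}u^-=o(R^{n+\gamma})$, and hence $\|u\|_{L^1(B_R)}=o(R^{n+\gamma})$.

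For \eqref{eq:mass-energy}, I apply \eqref{eq:annulus-data}, or directly \eqref{eq:endpoint-estimate}, rescaled from $B_1$ to $B_{2R}$ with $c=0$:
\[
\|Du\|_{L^2(B_R)}\le C\bigl(R^{-n/2-1}\|u\|_{L^1(B_{2R})}+R^{1-n/2}\|f^-(u)\|_{L^1(B_{2R})}\bigr),
\]
with $f^-(u)=(\Delta u)^+$. Both terms are $o(R^{n/2-1+\gamma})$, and squaring gives \eqref{eq:mass-energy}. The $O$ version follows verbatim.

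\emph{Part (ii).} The argument is the same with $\gamma=0$, and the logarithms come from the summations. For the weighted integral, on $A_{2^k}$ the weight $|x|^{2-n}$ is comparable to $2^{k(2-n)}$. Summing $o(1)$ terms over $k\le\log_2R$ gives $o(\log R)$, since a Cesàro average of terms tending to zero is small. For $u$, the identity above gives
\[
\bar u'(r)\le Cr^{1-n}\int_{B_r}(\Delta u)^+\le Cr^{1-n}\bigl(C+o(r^{n-2})\bigr).
\]
This is where the main difficulty lies. Mass near the origin contributes $r^{1-n}$, which is integrable for $n\ge3$. The remaining $o(r^{-1})$ integrates only to $o(\log r)$ in the Cesàro sense, so I need $\int_1^R o(1)\,dr/r=o(\log R)$, which holds. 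Thus $\bar u(r)\le C+o(\log r)$, so $\int_{B_R}u\le o(R^n\log R)$. Together with $\|u^-\|_{L^1(B_R)}=o(R^n\log R)$, obtained by dyadic summation, this gives \eqref{eq:log-mass}.

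The delicate points are keeping the $o$-bookkeeping uniform through these summations and handling the $C+o(\cdot)$ contributions at small radii.
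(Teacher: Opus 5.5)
Your proposal is correct and follows essentially the same route as the paper. The paper also uses dyadic summation to pass from annular to ball bounds, and the spherical-mean inequality $\bar u'(r)\le C r^{1-n}\|(\Delta u)^+\|_{L^1(B_r)}$ together with $|u|=u+2u^-$. It likewise uses the rescaled estimate \eqref{eq:endpoint-estimate} for the energy, and a Ces\`aro-type dyadic sum for the weighted integral in part~(ii).

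The ``Green-type representation'' controlling $(\Delta u)^-$ that you announce at the start is never needed, and your own argument does not use it. The spherical-mean step only requires $\int_{B_r}\Delta u\le\int_{B_r}(\Delta u)^+$.
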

\begin{proof}
We need to use the elementary fact, which can be easily proved by dyadic summation: for any locally integrable function $H\geq 0$ and any constant $d>0$, if
\begin{align}
    \int_{A_R}H=o(R^d),\nonumber
\end{align}
then
\begin{align}
    \int_{B_R}H=o(R^d).\nonumber
\end{align}
The same argument applies to $O(R^d)$ and $o(R^d\log R)$.

In part~(i), as both $n+\gamma$ and $n-2+\gamma$ are positive, the above fact gives the ball bounds for $u^-$ and $(\Delta u)^+$. Set
\[
 m(r)=\fint_{\partial B_r}u,\qquad H(r)=\int_{B_r}(\Delta u)^+.
\]
The divergence theorem gives
\[
 m'(r)=\frac{r^{1-n}}{|\mathbb S^{n-1}|}\int_{B_r}\Delta u
         \le\frac{r^{1-n}}{|\mathbb S^{n-1}|}H(r).
\]
Since $H(r)=o(r^{n-2+\gamma})$, for every $\varepsilon>0$, integration from a sufficiently large fixed radius gives
$m(r)\le C(u,\varepsilon)+C(n,\gamma)\varepsilon r^\gamma$.
Integrating in $r$, using $|u|=u+2u^-$, and letting $\varepsilon\downarrow0$ give the desired bound for $\|u\|_{L^1(B_R)}$. Under the $O$ assumptions, the same integration gives an $O(R^{n+\gamma})$ bound.

When $u$ is stable, $(\Delta u)^+=f^-(u)$. Rescaling Lemma~\ref{lem:endpoint} for $v(x)=u(2Rx)$, with nonlinearity $(2R)^2 f$, gives
\begin{equation*}
 \|Du\|_{L^2(B_R)}\le C(n)
 \left(R^{-n/2-1}\|u\|_{L^1(B_{2R})}
       +R^{1-n/2}\|(\Delta u)^+\|_{L^1(B_{2R})}\right).
\end{equation*}
The ball bounds of $u$ and $(\Delta u)^+$ then yield \eqref{eq:mass-energy}, also with $O$ in place of $o$.

In part~(ii), the above fact gives
$\|u^-\|_{L^1(B_R)}=o(R^n\log R)$ and $H(R)=o(R^{n-2})$. As in part (i), we can show that for every $\varepsilon>0$,
$m(r)\le C(u,\varepsilon)+C(n)\varepsilon\log r$ for all large $r$. Integration and $|u|=u+2u^-$ then give the first conclusion in \eqref{eq:log-mass}. On $A_{2^j}$,
\[
 \int_{A_{2^j}}(\Delta u)^+(x)|x|^{2-n}\,dx
 \le 2^{j(2-n)}\int_{A_{2^j}}(\Delta u)^+=:a_j,
 \qquad a_j\longrightarrow0.
\]
For $2^N\le R<2^{N+1}$, the second integral in \eqref{eq:log-mass} is bounded by
$\sum_{j=0}^Na_j=o(N)=o(\log R)$. The proof is complete.
\end{proof}

Our next two results concern the case $n=2$ and the case $n\geq 3$, respectively. 

\begin{lemma}\label{lem:planar}
Let $u\in C^2(\RR^2)$ be a stable solution of \eqref{eq:pde}. If
\[
 \int_{B_R}|Du|^2=O(R^2),
\]
then $u(x)=g(e\cdot x)$ for some $e\in\mathbb S^1$ and $g\in C^2(\RR)$.
\end{lemma}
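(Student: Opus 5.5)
We follow the classical one-dimensional symmetry argument for stable solutions in the plane (Farina, Sciunzi, Valdinoci; Ambrosio–Cabré), adapted to general locally Lipschitz $f$. The idea is to show that every directional derivative $\partial_e u$ is proportional to one fixed function, and then read off the one-dimensional structure.

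\textbf{Step 1 (geometric inequality with log cutoff).} Apply \eqref{eq:geometric} with the standard logarithmic cutoff: $\eta=1$ on $B_{\sqrt R}$, $\eta=2-2\log|x|/\log R$ on $B_R\setminus B_{\sqrt R}$, and $\eta=0$ outside $B_R$. Then
\[
 \int_{\RR^2}w^2|D\eta|^2\le \frac{C}{(\log R)^2}\int_{\sqrt R}^{R}\frac{1}{r^2}\Bigl(\frac{d}{dr}\int_{B_r}w^2\Bigr)\,dr.
\]
Integrating by parts in $r$ and using $\int_{B_r}w^2\le Cr^2$, the right-hand side is at most $C\log R/(\log R)^2\to0$. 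Letting $R\to\infty$ gives $\mathcal A_u\equiv0$ in $\RR^2$.

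\textbf{Step 2 (structure where $Du\neq0$).} On $\{w>0\}$, $\mathcal A_u=0$ means that the level curves have zero curvature and $|Du|$ is constant along them. Indeed, as recorded in the proof of Lemma~\ref{lem:endpoint}, $|D_{\Sigma}w|\le\mathcal A_u$ and $w|\mathcal H|\le\mathcal A_u$. So on each connected component of $\{w>0\}$, the direction $e=Du/|Du|$ is locally constant: $De=0$ follows from $\mathcal A_u=0$, because $D^2u\,p$ is parallel to $p$ and the tangential block vanishes. Hence on a component $\mathcal U$ we have $Du=w\,e_{\mathcal U}$ with $e_{\mathcal U}$ constant, and $u$ depends only on $e_{\mathcal U}\cdot x$ there.

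\textbf{Step 3 (globalize).} This is the main obstacle: $\{w>0\}$ may have several components, and we must show they all share the same direction, so that $u$ is globally one-dimensional. I would take $e$ to be the direction on one component $\mathcal U$. Then $v:=\partial_{e^\perp}u$ vanishes on $\mathcal U$, and by \eqref{eq:jacobi} it satisfies the linear equation $-\Delta v=f'_-(u)v$. Since $u\in C^2$, $u$ is $C^{1,1}$ locally, $f'_-(u)$ is locally bounded, and for locally Lipschitz $f$ the derivative $\partial_i u$ is a $W^{2,p}$ solution of this equation. Unique continuation for $-\Delta v=Vv$ with $V\in L^\infty_{\loc}$ in the plane (Carleman; or the strong unique continuation in two dimensions) then gives $v\equiv0$ on $\RR^2$. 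Hence $\partial_{e^\perp}u\equiv0$ everywhere, and $u(x)=g(e\cdot x)$ with $g(t)=u(te)\in C^2(\RR)$. If $\{w>0\}$ is empty, then $u$ is constant and any $e$ works.

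A fallback if unique continuation is delicate here: on the open set $\{w>0\}$, $u$ is locally $g_{\mathcal U}(e_{\mathcal U}\cdot x)$, so its level sets there are straight segments. By Step 2 the direction field $e$ is locally constant on $\{w>0\}$, and a level line of a nonconstant piece extends until $Du$ vanishes, but $w$ is constant along it, so it is a full line. Two full lines with different directions would intersect, which is impossible for distinct level values, and for equal values it contradicts $w>0$ having a unique normal. This forces a single common direction.
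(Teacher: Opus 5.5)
Your proposal is correct. Steps 1--2 match the paper's proof up to cosmetics. The paper uses a dyadic piecewise cutoff with $|D\eta_N|\le C/(N2^jL)$ on $B_{2^{j+1}L}\setminus B_{2^jL}$ to get $\int_{B_L}\mathcal A_u^2\le C/N$, which is the same computation as your logarithmic cutoff. It then reads off from $\mathcal A_u^2=|D_T|Du||^2+|Du|^2|\mathrm{II}|^2$ that regular level curves are flat with $|Du|$ constant along them.

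The real difference is the globalization step. The paper simply invokes \cite[Lem.\,2.11]{FarinaSciunziValdinoci2008}, a purely geometric statement about $C^2$ functions, which is essentially your fallback. Each regular level curve is a complete straight line, because $|Du|$ stays equal to a positive constant along it. Two nonparallel such lines would then cross at a regular point, which is impossible.

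Your main route instead uses the equation. Since $f(u)$ is locally Lipschitz, $v=\partial_{e^\perp}u$ lies in $W^{2,p}_{\loc}$ and satisfies $|\Delta v|\le C|v|$ a.e. It also vanishes on the open set $\mathcal U$, so weak unique continuation gives $v\equiv0$ on $\RR^2$. This is valid, and it never requires analyzing how level curves behave as they approach the critical set $\{Du=0\}$. It even shows that, for solutions, vanishing of $\mathcal A_u$ on a single open connected subset of $\{Du\ne0\}$ already forces global one-dimensionality. The cost is reliance on a nontrivial external theorem (Carleman/Aronszajn) and on the locally bounded potential $f'_-(u)$. The geometric argument, by contrast, is elementary and needs no equation once $\mathcal A_u\equiv0$ is known on all of $\{Du\ne0\}$.
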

\begin{proof}
For an integer $N\ge1$ and a large constant $L>0$, take a radial Lipschitz cutoff function $\eta_N$, equal to one on $B_L$, zero outside $B_{2^NL}$, and satisfying $|D\eta_N|\le C/(N2^jL)$ on the annulus
$B_{2^{j+1}L}\setminus B_{2^jL}$, $0\le j<N$. By \eqref{eq:geometric} we get
\[
 \int_{B_L}\mathcal A_u^2
 \le\frac C{N^2}\sum_{j=0}^{N-1}(2^jL)^{-2}
                          \int_{B_{2^{j+1}L}}|Du|^2
 \le\frac C N.
\]
By first letting $N\to\infty$ and then letting $L\to\infty$, we see that $\mathcal A_u=0$ almost everywhere in $\RR^2$. On the regular set $\{Du\ne0\}$, the level-set decomposition of the Hessian gives
\[
 \mathcal A_u^2=|D_T|Du||^2+|Du|^2|\mathrm{II}|^2,
\]
where $D_T$ is the tangential gradient and $\mathrm{II}$ is the second fundamental form of the regular level set. Thus $|Du|$ is constant along each connected regular level component and all its principal curvatures vanish. From \cite[Lem.\,2.11]{FarinaSciunziValdinoci2008} we obtain that
$u(x)=g(e\cdot x)$ for some $e\in\mathbb S^1$ and $g\in C^2(\RR)$.
\end{proof}

\begin{lemma}\label{lem:shell}
Let $n\ge3$, and let $u\in C^2(\RR^n)$ be a stable solution of \eqref{eq:pde}. Put $h=f^-(u)$ and $E(r)=\int_{B_r}|Du|^2$. If $3\le n\le10$, then, for $r>0$ and $\rho\ge9r/8$,
\begin{equation}\label{eq:shell-energy}
 \left(r^{2-n}E(r)\right)^{1/2}
 \le C(n)\left(\int_{B_{2\rho}\setminus B_\rho}|Du||x|^{1-n}\,dx
                  +\rho^{2-n}\|h\|_{L^1(B_{2\rho}\setminus B_\rho)}\right).
\end{equation}
For every $n\ge3$ and $0<r<R$,
\begin{equation}\label{eq:shell-splitting}
 \begin{aligned}
 \int_{B_R\setminus B_r}|Du||x|^{1-n}\,dx
 \le C(n)\biggl(&\fint_{B_{r/2}\setminus B_{r/4}}|u|
       +\fint_{B_{4R}\setminus B_{2R}}|u|\\
       &+\int_{B_{4R}\setminus B_{r/4}}h(x)|x|^{2-n}\,dx\biggr).
 \end{aligned}
\end{equation}
\end{lemma}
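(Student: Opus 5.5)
For \eqref{eq:shell-energy} I would combine the monotonicity \eqref{eq:radial-critical} with the annular control \eqref{eq:annulus-data}, and then a Poincar\'e inequality on the annulus. Since $\rho\ge 9r/8$, we have $r\le 8\rho/9$. Applying \eqref{eq:radial-critical} with outer radius $R'=16\rho/9\ge2r$ bounds $r^{2-n}E(r)$ by $C\rho^{2-n}\int_{B_{16\rho/9}}|Du|^2$. Rescaling \eqref{eq:annulus-data} from $B_1$ to $B_{2\rho}$, with inner radius $8/9$ and $s=1/2$, gives
\[
\|Du\|_{L^2(B_{16\rho/9})}\le C\bigl(\rho^{-1-n/2}\|u-c\|_{L^1(B_{2\rho}\setminus B_\rho)}+\rho^{1-n/2}\|h\|_{L^1(B_{2\rho}\setminus B_\rho)}\bigr).
\]
Here I would take $c$ to be the mean of $u$ on the annulus. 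The scaled Poincar\'e inequality on the (connected, since $n\ge3$) annulus gives $\|u-c\|_{L^1}\le C\rho\|Du\|_{L^1}\le C\rho^{n}\int_{B_{2\rho}\setminus B_\rho}|Du||x|^{1-n}$. Multiplying through by $\rho^{(2-n)/2}$ yields exactly \eqref{eq:shell-energy}. This part looks routine.

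For \eqref{eq:shell-splitting} I would use a cutoff and a Newtonian representation. Take a radial $\varphi\in C_c^\infty(B_{4R}\setminus\ol{B_{r/4}})$ that equals one on $B_{2R}\setminus B_{r/2}$ and has transitions inside $B_{r/2}\setminus B_{3r/8}$ and $B_{3R}\setminus B_{2R}$, with $|D^k\varphi|\le Ct^{-k}$ at scale $t$. Then $D(\varphi u)=D\Gamma*\bigl(\varphi\Delta u+2Du\cdot D\varphi+u\Delta\varphi\bigr)$, so
\[
\int_{B_R\setminus B_r}|Du||x|^{1-n}\le\int\Bigl(\int|x|^{1-n}|x-y|^{1-n}\,dx\Bigr)|g(y)|\,dy,\qquad g=\varphi\Delta u+2Du\cdot D\varphi+u\Delta\varphi.
\]
The key point is the Riesz composition identity $\int_{\RR^n}|x|^{1-n}|x-y|^{1-n}dx=c_n|y|^{2-n}$ for $n\ge3$, which avoids any logarithmic loss. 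The problem therefore reduces to bounding $\int|g(y)||y|^{2-n}dy$.

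The two transition-layer terms are straightforward. The term $u\Delta\varphi$ contributes $C\fint|u|$ over the two transition annuli. For $Du\cdot D\varphi$, I would apply the first line of \eqref{eq:endpoint-estimate} with $c=0$, rescaled and covered on the layers. This gives $\|Du\|_{L^1}\le C(t^{-1}\|u\|_{L^1}+t\|h\|_{L^1})$ on a slightly enlarged annulus inside $B_{r/2}\setminus B_{r/4}$, respectively $B_{4R}\setminus B_{2R}$. After weighting by $t^{-1}t^{2-n}$, this produces the two averages plus $\int h|x|^{2-n}$.

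I expect the main obstacle to be the term $\varphi|\Delta u|$, since $f^+(u)$ is not controlled directly. I would handle it using $|\Delta u|=-\Delta u+2h$ together with the harmonicity of $|x|^{2-n}$ away from the origin. Integrating by parts twice,
\[
\int\varphi|x|^{2-n}(-\Delta u)=-\int u\,\Delta\bigl(\varphi|x|^{2-n}\bigr),
\]
and $\Delta(\varphi|x|^{2-n})$ lives only on the transition layers with size $\le Ct^{-n}$. Hence
\[
\int\varphi|\Delta u||x|^{2-n}\le C\Bigl(\fint_{B_{r/2}\setminus B_{r/4}}|u|+\fint_{B_{4R}\setminus B_{2R}}|u|\Bigr)+2\int_{B_{4R}\setminus B_{r/4}}h|x|^{2-n}.
\]
Summing the three contributions gives \eqref{eq:shell-splitting}. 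The representation is justified because $\varphi u\in C^2_c$ when $u\in C^2$.
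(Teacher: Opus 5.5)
Your proposal is correct and follows essentially the paper's route. For \eqref{eq:shell-splitting}, the paper only cites the proof of \cite[Prop.\,1.9]{PengZhangZhou2024} with the substitution $|\Delta u|=-\Delta u+2h$, which is exactly the key step of your Newtonian-potential/Riesz-composition argument; just choose $\varphi$ so that $D\varphi$ is supported compactly inside $B_{r/2}\setminus\ol{B_{r/4}}$ and $B_{4R}\setminus\ol{B_{2R}}$ (e.g.\ $\varphi\equiv1$ on $B_{9R/4}\setminus B_{7r/16}$), so that your covering stays in those annuli. For \eqref{eq:shell-energy}, the only difference is that you enlarge the annulus $B_{16\rho/9}\setminus B_{4\rho/3}$ on the right of \eqref{eq:radial-critical} to the full ball, which forces you through Proposition~\ref{prop:annulus} and a Poincar\'e step; the paper instead applies the $\|Du\|_{L^1}$ line of \eqref{eq:endpoint-estimate} directly on that annulus by scaling and covering.
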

\begin{proof}
Applying \eqref{eq:radial-critical} with $R=16\rho/9$ gives
\[
 \left(r^{2-n}E(r)\right)^{1/2}
 \le C(n)\rho^{1-n/2}\|Du\|_{L^2(B_{16\rho/9}\setminus B_{4\rho/3})}.
\]
Combining \eqref{eq:endpoint-estimate} with a scaling and covering argument gives
\[
 \begin{aligned}
 \|Du\|_{L^2(B_{16\rho/9}\setminus B_{4\rho/3})}
 \le C(n)\bigl(&\rho^{-n/2}\|Du\|_{L^1(B_{2\rho}\setminus B_\rho)}\\
               &+\rho^{1-n/2}\|h\|_{L^1(B_{2\rho}\setminus B_\rho)}\bigr).
 \end{aligned}
\]
\eqref{eq:shell-energy} follows from the above two estimates.

For \eqref{eq:shell-splitting}, we just need to modify the proof of \cite[Prop.\,1.9]{PengZhangZhou2024}, which treats the case $f\geq 0$ and gives \eqref{eq:shell-splitting} with $h=0$. Specifically, we replace the identity $|\Delta u|=-\Delta u$ used there by
\[
 |\Delta u|=-\Delta u+2h.
\]
It is not hard to see that the contribution of $2h$ is bounded by
\[
 C(n)\int_{B_{4R}\setminus B_{r/4}}h(x)|x|^{2-n}\,dx.
\]
This gives \eqref{eq:shell-splitting} for every $n\ge3$.
\end{proof}

\subsection{Liouville theorems}
\begin{proof}[Proof of Theorem~\ref{thm:liouville}]
For simplicity, we set $h=f^-(u)=(\Delta u)^+$.

\emph{Part (i).} Lemma~\ref{lem:mass-conversion}(i), with $\gamma=1$, gives $\int_{B_R}|Du|^2=o(R^2)$. Lemma~\ref{lem:planar} then implies $u(x)=g(e\cdot x)$ for some $e\in\mathbb S^1$ and $g\in C^2(\RR)$. Suppose $h_0(t):=f^-(g(t))$ is positive at some point. By continuity it is bounded below by a positive number on a fixed interval $I$. For all sufficiently large $R$, we have
\[
 \int_{A_R}h_0(e\cdot x)\,dx\ge cR\int_I h_0(t)\,dt,
\]
contrary to the  hypothesis. Therefore, we have $f(g)\ge0$ and $g''=-f(g)\le0$. If $g$ is nonconstant, the concavity of $g$ shows $g(t)\le-c|t|+C$ at one end of $\RR$. Therefore, for $R$ large, $u^-\ge c'R$ on $A_R\cap \{e\cdot x<-R\}$ or $A_R\cap \{e\cdot x>R\}$. However, this contradicts the assumption $\|u^-\|_{L^1(A_R)}=o(R^3)$. Hence $u$ is constant.

\vskip0.1in

\emph{Part (ii).} Lemma~\ref{lem:mass-conversion}(i), with $\gamma=\alpha_n$, gives
\[
 \int_{B_R}|Du|^2=o(R^{n-2+2\alpha_n}).
\]
For fixed $r>0$, from \eqref{eq:energy-decay} we have
\[
 \int_{B_r}|Du|^2
 \le C(n,r)R^{-(n-2+2\alpha_n)}\int_{B_R}|Du|^2=o(1).
\]
First letting $R\to\infty$ and then letting $r\to\infty$ proves that $Du\equiv0$, which means that $u$ is constant.

\vskip0.1in

\emph{Part (iii).} By Lemma~\ref{lem:mass-conversion}(ii), we have
\begin{align}\label{add13}
    \|u\|_{L^1(B_T)}=o(T^n\log T),\qquad
 \int_{B_T\setminus B_1}h(x)|x|^{2-n}\,dx=o(\log T).
\end{align}
Define $\omega(R)=\sup_{\rho\ge R}\rho^{2-n}\|h\|_{L^1(A_\rho)}$, which tends to zero as $R\to\infty$. Fix $r>4$, take $R$ large, and set $N=\lfloor\log_2R\rfloor$. Averaging \eqref{eq:shell-energy} over $\rho=2^jR$, $0\le j<N$, yields
\[
 \left(r^{2-n}\int_{B_r}|Du|^2\right)^{1/2}
 \le\frac{C(n)}N\int_{B_{R^2}\setminus B_R}|Du||x|^{1-n}\,dx
       +C(n)\omega(R).
\]
Next we apply \eqref{eq:shell-splitting} with inner radius $4$ and outer radius $R^2$ to derive
\begin{align}
    \int_{B_{R^2}\setminus B_R}|Du||x|^{1-n}\,dx
 \le C(n)\biggl(&\fint_{B_{2}\setminus B_{1}}|u|
       +\fint_{B_{4R^2}\setminus B_{2R^2}}|u|\nonumber\\
       &+\int_{B_{4R^2}\setminus B_{1}}h(x)|x|^{2-n}\,dx\biggr).\nonumber
\end{align}
From \eqref{add13} we see that the right-hand side is $o(\log R)$. Thus, first letting $R\to\infty$ and then letting $r\to\infty$ yields $Du\equiv 0$, which shows that $u$ is constant.

\vskip0.1in

\emph{Part (iv).} Since $\kappa_n=(2-\beta_n)/2$, applying \eqref{eq:annulus-data} in a scaled form gives
\[
 \|Du\|_{L^2(B_R)}\le C(n)
 \left(R^{-n/2-1}\|u\|_{L^1(A_R)}
        +R^{1-n/2}\|h\|_{L^1(A_R)}\right)
 =o(R^{(n-\beta_n)/2}).
\]
Hence $\int_{B_R}|Du|^2=o(R^{n-\beta_n})$. For  $0<r<R/2$, from \eqref{eq:radial-high} we have
\[
 r^{\beta_n-n}\int_{B_r}|Du|^2
 \le C(n)R^{\beta_n-n}\int_{B_R\setminus B_{3R/4}}|Du|^2.
\]
First letting $R\to\infty$ and then letting $r\to\infty$ shows that $u$ is constant.
\end{proof}

\appendix
\section{A short proof of a radial Poincar\'e inequality}\label{app:radial-poincare}

Cabr\'e's radial Poincar\'e inequality bounds the oscillation of a superharmonic function in terms of its radial derivative; see \cite[Thm.\,1.4]{Cabre2022} and \cite[Prop.\,C]{Cabre2026}. It was originally introduced to provide a new, quantitative proof of the interior H\"older estimates in \cite{CabreFigalliRosOtonSerra2020}. The following result extends the annular version of this inequality to general functions, while also providing a short alternative proof in the superharmonic setting.

\begin{lemma}\label{lem:appendix-radial-poincare}
Let $n\ge2$ and $A=B_1\setminus\ol{B_{1/2}}$. For every $u\in W^{2,1}(A)$,
\begin{equation}\label{eq:appendix-radial-poincare}
 \|u-u_A\|_{L^1(A)}
 \le C(n)\left(\|u_r\|_{L^1(A)}+\|(\Delta u)^+\|_{L^1(A)}\right),
\end{equation}
where $u_r=x\cdot Du/|x|$ and $u_A=\fint_Au$.
\end{lemma}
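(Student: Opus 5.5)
The plan is to work in polar coordinates $x=r\omega$ and split the oscillation of $u$ into a radial part, controlled directly by $u_r$, and an angular part. The angular part will be controlled by an $L^1$ bound for the Laplace--Beltrami operator on $\mathbb S^{n-1}$. By density of smooth functions in $W^{2,1}(A)$ we may assume $u\in C^2(\ol A)$.

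\emph{Step 1: interior $L^1$ bound for $\Delta u$.} Fix a radial $\psi=\psi(r)\in C_c^\infty((5/8,7/8))$ with $\psi\ge0$ and $\int_{5/8}^{7/8}\psi\,dr=1$. Exactly as in \eqref{eq:laplace-sign}, we use $|\Delta u|=-\Delta u+2(\Delta u)^+$ and integrate by parts against $\psi$. Since $D\psi=\psi'(r)x/|x|$, this gives
\[
 \int_A\psi|\Delta u|=\int_A\psi'(r)u_r+2\int_A\psi(\Delta u)^+\le C(n)\bigl(\|u_r\|_{L^1(A)}+\|(\Delta u)^+\|_{L^1(A)}\bigr)=:C\mathcal M .
\]
Enlarging the support slightly, we get the same bound for $\|\Delta u\|_{L^1}$ on a region containing the support of $\psi$ and $\psi'$.

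\emph{Step 2: the angular profile.} Set $G(\omega)=\int\psi(r)u(r\omega)\,dr$, a function on $\mathbb S^{n-1}$. We use the polar decomposition
\[
 r^{-2}\Delta_{\mathbb S}u=\Delta u-r^{1-n}\partial_r(r^{n-1}u_r).
\]
Multiplying it by $r^2\psi(r)$ and integrating in $r$, we then integrate the last term by parts in $r$; this puts a derivative on $r^{3-n}\psi$ and leaves only $u_r$. Integrating over $\omega$ and using Step 1, we get
\[
 \|\Delta_{\mathbb S}G\|_{L^1(\mathbb S^{n-1})}\le C(n)\mathcal M.
\]
Next we apply an $L^1$ Poincar\'e-type inequality on the sphere:
\[
 \|G-\bar G\|_{L^1(\mathbb S^{n-1})}\le C(n)\|\Delta_{\mathbb S}G\|_{L^1(\mathbb S^{n-1})}.
\]
To prove it, write $G-\bar G=\int \mathcal G(\cdot,\omega')\Delta_{\mathbb S}G(\omega')\,d\omega'$, where $\mathcal G$ is the Green function of $\Delta_{\mathbb S}$ on functions of mean zero. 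This kernel satisfies $|\mathcal G(\omega,\omega')|\lesssim d^{3-n}$, or a logarithm or a bounded function in low dimensions (a bounded kernel on the circle when $n=2$). In all cases $\sup_{\omega'}\|\mathcal G(\cdot,\omega')\|_{L^1}<\infty$, so the inequality follows from Minkowski's integral inequality.

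\emph{Step 3: radial transport.} For $x=r\omega\in A$ we have
\[
 |u(r\omega)-G(\omega)|\le\int\psi(s)|u(r\omega)-u(s\omega)|\,ds\le\int_{1/2}^1|u_r(t\omega)|\,dt .
\]
Since $t^{n-1}\ge 2^{1-n}$ on $[1/2,1]$, integrating over $A$ gives $\|u-G\|_{L^1(A)}\le C(n)\|u_r\|_{L^1(A)}$. Here $G$ is viewed as a function on $A$ that is constant along rays. Combining this with Step 2 gives $\|u-\bar G\|_{L^1(A)}\le C\mathcal M$. Finally, $\|u-u_A\|_{L^1(A)}\le 2\|u-c\|_{L^1(A)}$ for every constant $c$, and taking $c=\bar G$ proves \eqref{eq:appendix-radial-poincare}.

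The main obstacle is Step 2. One must make sure that integrating the Laplacian only over the interior shell, where Step 1 gives control, already produces $\Delta_{\mathbb S}G$ with no boundary terms; this is the reason $\psi$ is compactly supported. One must also check that the spherical Green kernel is uniformly in $L^1$, and this needs separate care when $n=2$ and $n=3$.
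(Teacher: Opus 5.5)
Your proof is correct and follows essentially the same route as the paper: a radially weighted angular average, the polar formula for $\Delta_{\mathbb S^{n-1}}$ with integration by parts in $r$, the $L^1$ Green's-function bound on the sphere, and radial transport along rays. The one difference is where you upgrade from $(\Delta u)^+$ to a two-sided bound. You bound $\int_A\psi|\Delta u|$ in the shell via $|\Delta u|=-\Delta u+2(\Delta u)^+$. The paper instead takes positive parts in its formula for $\Delta_{\mathbb S^{n-1}}U$ and uses $\int_{\mathbb S^{n-1}}\Delta_{\mathbb S^{n-1}}U=0$, which is the same divergence identity in a different form; your unweighted $\psi\,dr$ versus the paper's weight $\eta(s)s^{n-3}\,ds$ is likewise cosmetic.
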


\begin{proof}
By approximation, we may assume that $u$ is smooth on $A$. Choose a nonnegative function $\eta\in C_c^\infty((1/2,1))$ such that
\[
 \int_{1/2}^1\eta(s)s^{n-3}\,ds=1,
\]
and define, for $\omega\in\mathbb S^{n-1}$,
\[
 U(\omega)=\int_{1/2}^1\eta(s)s^{n-3}u(s\omega)\,ds.
\]
The polar decomposition of the Laplacian gives
\[
 \Delta_{\mathbb S^{n-1}}u(s\omega)
 =s^2\Delta u(s\omega)-s^2u_{ss}(s\omega)-(n-1)su_s(s\omega).
\]
Multiplying it by $\eta(s)s^{n-3}$ and integrating by parts in $s$ yields
\begin{equation}\label{eq:appendix-spherical-laplacian}
 \Delta_{\mathbb S^{n-1}}U(\omega)
 =\int_{1/2}^1\eta(s)s^{n-1}\Delta u(s\omega)\,ds
  +\int_{1/2}^1\eta'(s)s^{n-1}u_s(s\omega)\,ds.
\end{equation}
Taking positive parts and integrating on the sphere then gives
\begin{equation}\label{eq:appendix-positive-lap}
 \|(\Delta_{\mathbb S^{n-1}}U)^+\|_{L^1(\mathbb S^{n-1})}
 \le C(n)\left(\|(\Delta u)^+\|_{L^1(A)}+\|u_r\|_{L^1(A)}\right).
\end{equation}
Since the integral of a spherical Laplacian is zero, we have
$\|\Delta_{\mathbb S^{n-1}}U\|_{L^1}=2\|(\Delta_{\mathbb S^{n-1}}U)^+\|_{L^1}$. Moreover, from Green's representation in $\mathbb S^{n-1}$ we have
\begin{equation}\label{eq:appendix-sphere-poincare}
 \|U-U_{\mathbb S^{n-1}}\|_{L^1(\mathbb S^{n-1})}
 \le C(n)\|\Delta_{\mathbb S^{n-1}}U\|_{L^1(\mathbb S^{n-1})}.
\end{equation}

To compare $u(x)$ with $U(x/|x|)$, note that for $1/2<r<1$,
\[
 u(r\omega)-U(\omega)
 =\int_{1/2}^1\eta(s)s^{n-3}\bigl(u(r\omega)-u(s\omega)\bigr)\,ds,
\]
and
\[
 |u(r\omega)-u(s\omega)|
 \le\int_{\min\{r,s\}}^{\max\{r,s\}}|u_t(t\omega)|\,dt.
\]
Integrating first in $r$ and $s$ and then in $\omega$, by Fubini's theorem we derive that
\begin{equation}\label{eq:appendix-ray}
 \|u-U(x/|x|)\|_{L^1(A)}\le C(n)\|u_r\|_{L^1(A)}.
\end{equation} Combining \eqref{eq:appendix-positive-lap}--\eqref{eq:appendix-ray} then yields
\[
 \|u-U_{\mathbb S^{n-1}}\|_{L^1(A)}
 \le C(n)\left(\|u_r\|_{L^1(A)}+\|(\Delta u)^+\|_{L^1(A)}\right).
\]
As for every constant $c$,
\[
 |u_A-c|\le\fint_A|u-c|,
 \qquad
 \|u-u_A\|_{L^1(A)}\le2\|u-c\|_{L^1(A)},
\]
taking $c=U_{\mathbb S^{n-1}}$ gives \eqref{eq:appendix-radial-poincare} for smooth $u$.
\end{proof}

\begin{remark}
The radial Poincar\'e inequality above also provides an alternative approach to interior H\"older regularity. Indeed, following Cabr\'e's argument in \cite[Sec.\,2.6]{Cabre2026} and combining Lemma~\ref{lem:appendix-radial-poincare} with \eqref{eq:radial-budget} and the gradient estimate in Theorem~\ref{thm:gradient}, we can apply H\"older's inequality and a standard iteration argument to obtain, for $3\le n\le9$ and every $p>n/2$,
\[
 \|u\|_{C^{0,\alpha}(\ol{B_{1/2}})}
 \le C(n,p)\left(
 \|u\|_{L^1(B_1)}
 +\|f^-(u)\|_{L^p(B_1)}
 \right)
\]
for some $\alpha=\alpha(n,p)\in(0,1)$. We omit the details. Unlike Theorem~\ref{thm:holder}, this estimate requires higher integrability of $f^-(u)$.
\end{remark}

\section*{Acknowledgements}

	The authors are grateful to Professor Louis Dupaigne for carefully reading an earlier version of the manuscript and for his helpful comments and stimulating correspondence.

\section*{Declarations}

\noindent\textbf{Conflict of interest.}
The authors declare that they have no conflicts of interest.

\medskip

\noindent\textbf{Data availability.}
Data sharing is not applicable to this article, as no datasets were generated or analyzed during the current study.

\medskip

\noindent\textbf{Generative AI disclosure.}
The research leading to this paper began in March 2025. Before using AI-assisted tools, the authors had independently developed the overall research framework and established all the main results in their essential form, which they subsequently continued to refine and improve. More recently, ChatGPT was used to assist in locating relevant references, filling in technical details, checking calculations and proofs, and improving the exposition of the manuscript. The authors have carefully verified the mathematical arguments and take full responsibility for the results and the final content of this paper.

\bibliographystyle{amsplain}
\bibliography{references}

\end{document}